\newtheorem{theorem}{Theorem}[section]
\newtheorem{corollary}[theorem]{Corollary}
\newtheorem{lemma}[theorem]{Lemma}
\theoremstyle{remark}
\numberwithin{equation}{section}
\newcommand{\Fcal}{\mathscr{F}}
\newcommand{\Lcal}{\mathscr{L}}
\newcommand{\Mcal}{\mathscr{M}}
\newcommand{\Ocal}{\mathscr{O}}
\newcommand{\Pro}{\mathbb{P}}
\newcommand{\Z}{\mathbb{Z}}
\newcommand{\C}{\mathbb{C}}
\newcommand{\F}{\mathbb{F}}
\newcommand{\Q}{\mathbb{Q}}
\newcommand{\R}{\mathbb{R}}
\newcommand{\A}{\mathbb{A}}
\newcommand{\E}{\mathbb{E}}
\newcommand{\ord}{\mathrm{ord}}
\newcommand{\rk}{\mathrm{rank}\,}
\newcommand{\supp}{\mathrm{supp}\,}
  \DeclareFontFamily{U}{wncy}{}
    \DeclareFontShape{U}{wncy}{m}{n}{<->wncyr10}{}
    \DeclareSymbolFont{mcy}{U}{wncy}{m}{n}
    \DeclareMathSymbol{\Sha}{\mathord}{mcy}{"58}
\begin{document}
\title[]{Elliptic curves with long arithmetic progressions have large rank}

\author{Natalia Garcia-Fritz}
\address{ Departamento de Matem\'aticas\newline
\indent Pontificia Universidad Cat\'olica de Chile\newline
\indent Facultad de Matem\'aticas\newline
\indent 4860 Av. Vicu\~na Mackenna\newline
\indent Macul, RM, Chile}
\email[N. Garcia-Fritz]{natalia.garcia@mat.uc.cl}

\author{Hector Pasten}
\address{ Departamento de Matem\'aticas\newline
\indent Pontificia Universidad Cat\'olica de Chile\newline
\indent Facultad de Matem\'aticas\newline
\indent 4860 Av. Vicu\~na Mackenna\newline
\indent Macul, RM, Chile}
\email[H. Pasten]{hector.pasten@mat.uc.cl}%

\thanks{N. G.-F. was supported by the FONDECYT Iniciaci\'on en Investigaci\'on grant 11170192 and the CONICYT PAI grant 79170039. H.P. was supported by FONDECYT Regular grant 1190442.}
\date{\today}
\subjclass[2010]{Primary 11G05; Secondary 30D35, 11B25} %
\keywords{Ranks, arithmetic progression, elliptic curve, Nevanlinna theory,  abelian varieties.}%

\begin{abstract} For any family of elliptic curves over the rational numbers with fixed $j$-invariant, we prove that the existence of a long sequence of rational points whose $x$-coordinates form a non-trivial arithmetic progression implies that the Mordell-Weil rank is large, and similarly for $y$-coordinates. We give applications related to uniform boundedness of ranks, conjectures by Bremner and Mohanty, and arithmetic statistics on elliptic curves. Our approach involves  Nevanlinna theory as well as R\'emond's quantitative extension of results of Faltings.
\end{abstract}

\maketitle


\section{Introduction}

 It is an open problem whether the ranks of elliptic curves over $\Q$ are uniformly bounded. Various heuristics have been developed in support of uniform boundedness \cite{ BhKLPR, PaPVW, PoICM, PoR, RubinSi, WaDEFGR}. Also, the second author has shown  \cite{PastenLangRks} that a conjecture of Lang in diophantine approximation implies uniform boundedness of ranks for families of elliptic curves with a fixed $j$-invariant. In the direction of unboundedness, it is known that elliptic curves over a global function field such as $\F_p(t)$ can have arbitrarily large rank even if one considers quadratic twists families \cite{ShaTat, Ulm} and examples over $\Q$ with remarkably large rank are known \cite{ElkiesRks}. It is natural to look for a mechanism forcing the rank of elliptic curves over $\Q$ to be large, and certain patterns on rational points seem to achieve this.

Given an elliptic curve $E$ over $\Q$, an \emph{$x$-arithmetic progression} is a sequence $P_1,...,P_N$ of $\Q$-rational points on $E$ having their $x$-coordinates in arithmetic progression for some choice of Weierstrass equation $y^2=x^3+ax^2+bx+c$ for $E$. A \emph{$y$-arithmetic progression} on $E$ is defined similarly. Such sequences are said to be \emph{non-trivial} if the resulting arithmetic progression in $x$ or $y$ coordinates is non-constant. These definitions are in fact independent of the choice of Weierstrass equation.

 Bremner \cite{Bremner} has conjectured that rational points of an $x$-arithmetic progression on an elliptic curve $E$ over $\Q$ tend to be linearly independent in the Mordell-Weil group. The conjecture is motivated by numerous examples, as well as theoretical evidence such as \cite{BST} where it is shown that for a quadratic twist family over $\Q$, the elliptic curves of rank $1$ have $x$-arithmetic progressions of uniformly bounded length. See also \cite{Campbell, GSTor, MoZa, Spearman, Ulas} and the references therein for further examples supporting Bremner's conjecture.

In this work we prove Bremner's conjecture for families of elliptic curves with fixed $j$-invariant, in particular, for quadratic twist families. Given an elliptic curve $E$ over $\Q$, we let $\beta_x(E)$ be the maximal length of a non-trivial $x$-arithmetic progression of rational points in $E$, and $\beta_y(E)$ is defined similarly for $y$-arithmetic progressions. We prove
\begin{theorem}\label{ThmIntro} Let $j_0\in \Q$. There is an effectively computable constant $c(j_0)>0$ that only depends on $j_0$, such that for every elliptic curve $E$ over $\Q$ with $j$-invariant equal to $j_0$ we have 
$$
1+\rk E(\Q)\ge c(j_0)\cdot \log \max\{\beta_x(E),\beta_y(E)\}.
$$
\end{theorem}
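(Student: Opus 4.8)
The plan is to convert a long progression into many rational points on an auxiliary curve sitting inside a power of $E$, and then to invoke R\'emond's quantitative form of the Mordell--Lang theorem. I treat $x$-progressions; $y$-progressions are handled identically after replacing the $x$-coordinate by the $y$-coordinate throughout. First I would reduce to a one-parameter situation: writing $E$ as $y^2=f(x)$ with $f$ a cubic, the hypothesis $j(E)=j_0$ means that $E$ is a twist (quadratic, or quartic/sextic if $j_0\in\{0,1728\}$) of one fixed curve $E_0/\Q$, so after a rational change of variables $f$ ranges over a single family parametrized by the twisting parameter, and the common difference of the progression is rescaled along with it.

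Next, given an $x$-progression $P_1,\dots,P_N\in E(\Q)$ with common difference $\delta\neq 0$, fix a small integer $m=m(j_0)\ge 2$ and consider
\[
C_\delta \;=\;\bigl\{(Q_1,\dots,Q_m)\in E^m \;:\; x(Q_{i+1})-x(Q_i)=\delta,\ 1\le i\le m-1\bigr\}\subset E^m,
\]
a complete intersection of $m-1$ divisors, each linearly equivalent to a fixed multiple of fibre classes; thus $C_\delta$ has degree (with respect to the product principal polarization $\theta$ on $E^m$) and geometric genus bounded by a constant depending only on $m$, hence only on $j_0$, and $C_\delta\hookrightarrow E^m$ since one recovers $(Q_1,\dots,Q_m)$ from the point. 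Each length-$m$ sub-progression $(P_j,P_{j+1},\dots,P_{j+m-1})$, for $1\le j\le N-m+1$, yields a rational point of $C_\delta$, and these $N-m+1$ points are pairwise distinct because $\delta\neq 0$; crucially they lie in the subgroup $E(\Q)^m\le E^m(\Q)$, of rank $m\cdot\rk E(\Q)$, which is what will produce $\rk E(\Q)$ rather than the rank of some Jacobian.

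The decisive point, and the one for which Nevanlinna theory is needed, is to control the geometry of these curves uniformly over the family: one must rule out that a large proportion of the $N-m+1$ special points accumulates on an irreducible component of $C_\delta$ of geometric genus $\le 1$, which, being rational or elliptic, could a priori carry infinitely many rational points. Equivalently, one needs the fibres $C_\delta$ of the surface $\{\,x_{i+1}-x_i=x_i-x_{i-1}\,\}\subset E^m$ over the line of common differences to be of the right type; the holomorphic shadow of this statement---that a non-constant entire curve in $E^m$ satisfying the $m-1$ affine relations is forced to have vanishing common difference once $m$ is large in terms of $j_0$---is accessible through the Second Main Theorem and the lemma on logarithmic derivatives, and it is precisely here that fixing a single value $j_0$ (rather than letting $j$ vary) is essential, so that one argues within one geometric family. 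I expect this hyperbolicity-type input to be the main obstacle; the case $j_0\in\{0,1728\}$, where $E_0$ has complex multiplication and $f$ acquires extra symmetry, will require separate bookkeeping.

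Granting this, the endgame is short. Apply R\'emond's theorem to $C_\delta$ inside the abelian variety $A=E^m$: since $\dim A=m$, the polarization is principal, $\deg_\theta C_\delta\le c_0(j_0)$, each relevant component has geometric genus $\ge 2$ so the exceptional (Ueno) locus is empty there, and the special points lie in a subgroup of rank $m\cdot\rk E(\Q)$, one obtains
\[
N-m+1\;\le\;\#\, C_\delta(\Q)\;\le\;c_1(j_0)^{\,1+m\cdot\rk E(\Q)}
\]
with $c_1(j_0)$ effective and depending only on $j_0$. Taking logarithms gives $1+\rk E(\Q)\ge c(j_0)\log(N-m+1)$, and shrinking the constant to absorb small $N$ yields $1+\rk E(\Q)\ge c(j_0)\log\beta_x(E)$. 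Running the same argument with the $y$-coordinate gives the inequality with $\beta_y(E)$, and taking the larger of the two right-hand sides completes the proof.
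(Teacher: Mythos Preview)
Your strategy is essentially the paper's: encode length-$m$ sub-progressions as points of a subvariety of $E^m$ lying in the finite-rank subgroup $E(\Q)^m$, use Nevanlinna theory to show this subvariety contains no translate of a positive-dimensional abelian subvariety outside a controlled locus, and conclude via R\'emond's quantitative Faltings. The only packaging difference is that the paper works with the \emph{surface} $X_n\subset E^n$ obtained by pulling back the linear surface of \emph{all} length-$n$ arithmetic progressions (common difference allowed to vary), whereas you slice at a fixed $\delta$ to get a \emph{curve} $C_\delta$. Both work: R\'emond's constant depends only on $(E^m,\Lcal)$, not on the subvariety, so the $\delta$-dependence of $C_\delta$ is harmless once $\deg_\theta C_\delta$ is bounded uniformly in $\delta$ (it is, by linear equivalence). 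Your initial reduction to twists of a fixed $E_0$ is also unnecessary: one simply notes that $E\otimes_\Q\Q^{alg}$ is determined by $j_0$, so $c(E^m,\Lcal)$ depends only on $j_0$ and $m$.

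One correction to your outline: the Nevanlinna step does \emph{not} depend on $j_0$, and the cases $j_0\in\{0,1728\}$ need no separate bookkeeping. The paper's Theorem~4.1 shows, for \emph{any} complex elliptic curve $E$ and any degree-$d$ map $g\colon E\to\Pro^1$, that if $\phi_1,\dots,\phi_M\colon\C\to E$ are non-constant holomorphic and $g\circ\phi_j=F_1+a_jF_2$ with the $a_j$ distinct and $F_2\not\equiv 0$, then $M\le 10d^2-4d$. An entire curve in your $C_\delta$ with $\delta\ne 0$ produces exactly this situation with $F_2\equiv\delta$ constant (and if some $\phi_j$ were constant then all $g\circ\phi_j$ would be constant, hence all $\phi_j$ constant since $g$ is finite). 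Thus the correct choice is $m=10d^2-4d+2$, i.e.\ $m=34$ for $x$-coordinates and $m=80$ for $y$-coordinates, independent of $j_0$. The dependence of the final constant on $j_0$ enters only through R\'emond's $c(E^m,\Lcal)$, which the paper makes explicit via Pazuki's comparison of theta and Faltings heights.
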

In this work, by ``effectively computable'' we always mean that an explicit closed formula can be obtained after some calculation. For instance, in Theorem \ref{ThmIntro} we can take
\begin{equation}\label{Eqncj0}
c(j_0)=\frac{1}{80^{46}\left(209 +  \max\{  \log (1+\log H(j_0)) , 255 + \log\left( 2 + \log(1+\log H(j_0))\right)\}\right)  }
\end{equation}
where $H(j_0)$ is the naive height, namely $H(a/b)=\max\{|a|,|b|\}$ for coprime integers $a,b$ with $b\ne 0$. Although it should be clear from the formula that we did not try to numerically optimize this estimate, let us remark that this value for $c(j_0)$ is satisfactory in the aspect that it is of the order of magnitude of $1/(\log \log H(j_0))$, so, it tends to $0$ extremely slowly. 

We prove that \eqref{Eqncj0} is admissible for Theorem \ref{ThmIntro} in Section \ref{SecPazuki} using (among other tools) a comparison between the Theta height and the Faltings height of abelian varieties  \cite{Pazuki}.

It turns out that arithmetic progressions on elliptic curves not only relate to the rank. We also prove that the (algebraic) torsion points do not have arbitrarily long patterns of this type.  
\begin{theorem} \label{ThmTorIntro} Let $E$ be an elliptic curve over $\Q^{alg}$ with a given Weierstrass equation.  The set of $x$-coordinates of the torsion points of $E(\Q^{alg})$ does not contain arbitrarily long non-trivial  arithmetic progressions. The same holds for $y$-coordinates. A bound for the length of such progressions can be effectively computed from the $j$-invariant of $E$.
\end{theorem}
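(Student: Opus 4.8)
The plan is to deduce the statement from two ingredients: an effective, uniform bound on the Weil heights of the $x$- and $y$-coordinates of torsion points of $E$, and the elementary fact that a non-trivial arithmetic progression made of algebraic numbers of bounded height has bounded length. Throughout I work with a short Weierstrass equation $y^{2}=x^{3}+ax^{2}+bx+c$ for $E$ as in the introduction, write $h$ for the absolute logarithmic Weil height on $\Q^{alg}$, and write $\hat h$ for the N\'eron--Tate canonical height on $E$.

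For the first ingredient I would invoke Silverman's explicit comparison between the Weil and canonical heights: there is an effectively computable $C\ge 0$, depending only on the coefficients of the chosen equation, such that $|h(x(Q))-\hat h(Q)|\le C$ for every $Q\in E(\Q^{alg})$. Since $\hat h(Q)=0$ precisely when $Q$ is a torsion point, this gives $h(x(Q))\le C$ for all torsion $Q$; and since $y(Q)^{2}=x(Q)^{3}+ax(Q)^{2}+bx(Q)+c$ together with $h(y(Q)^{2})=2h(y(Q))$ bounds $h(y(Q))$ in terms of $h(x(Q))$ and the coefficients, we also get $h(y(Q))\le C'$ for an effectively computable $C'$.

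Next I would isolate the elementary length bound. Suppose $x_{1},\dots,x_{N}$ is a non-trivial arithmetic progression all of whose terms are $x$-coordinates of torsion points, so $x_{i}=x_{1}+(i-1)d$ with $d=x_{2}-x_{1}\ne 0$. Using the standard inequalities $h(\alpha\beta)\le h(\alpha)+h(\beta)$, $h(\alpha/\beta)\le h(\alpha)+h(\beta)$, $h(\alpha-\beta)\le h(\alpha)+h(\beta)+\log 2$, and $h(n)=\log|n|$ for a nonzero integer $n$, for each $i$ with $2\le i\le N$ one gets
\[
\log(i-1)=h(i-1)=h\!\left(\frac{x_{i}-x_{1}}{x_{2}-x_{1}}\right)\le h(x_{i}-x_{1})+h(x_{2}-x_{1})\le 4C+2\log 2 .
\]
Taking $i=N$ yields $N\le 1+4e^{4C}$, and the $y$-coordinate case is identical with $C$ replaced by $C'$.

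Finally, for the dependence on $j(E)$ alone: the maximal length of a non-trivial $x$- or $y$-arithmetic progression on $E$ is invariant under the substitutions relating two short Weierstrass equations of $E$ — these act on $x$ by an affine map and on $y$ by a scaling, both of which preserve arithmetic progressions and their lengths — hence it depends only on the isomorphism class of $E$, that is, only on $j(E)$. One therefore runs the argument with a fixed model whose coefficients are explicit in $j$ and inserts Silverman's explicit value of $C$ for that model. I do not anticipate a real obstacle: the only substantive input is the uniform effective height bound for torsion points, and everything else is bookkeeping; unlike Theorem \ref{ThmIntro}, this statement needs neither Nevanlinna theory nor R\'emond's estimates. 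The single point requiring some care is to track how Silverman's constant depends on the coefficients and hence on $j$, should one want a clean closed-form bound.
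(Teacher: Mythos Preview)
Your argument is correct and substantially more elementary than the paper's own route. The paper derives Theorem~\ref{ThmTorIntro} as a specialization of Corollary~\ref{Coro2}, which in turn is Theorem~\ref{ThmMainArith} applied with $\Gamma=E(\Q^{alg})_{tor}$; that proof builds the surface $X_n\subseteq E^n$, uses Nevanlinna theory (Theorem~\ref{ThmMainHolo}) to confine the Kawamata locus of $X_n$, and then invokes R\'emond's quantitative form of Faltings/Raynaud (Theorem~\ref{ThmRemond}) for the rank-$0$ group $\Gamma^n$. In other words, the paper's proof of the torsion case is really the Manin--Mumford conjecture in disguise, plus a nontrivial computation of special subvarieties.

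Your proof bypasses all of this by exploiting the one feature that singles out torsion among finite-rank subgroups: the N\'eron--Tate height vanishes, so Silverman's explicit comparison $|\hat h(Q)-\tfrac12 h(x(Q))|\le C$ immediately gives a uniform Weil-height bound on the $x$- (hence $y$-) coordinates of all torsion points, and the elementary identity $i-1=(x_i-x_1)/(x_2-x_1)$ then caps $N$. This yields a far better explicit bound (of shape $N\le 1+4e^{4C}$, polynomial in $H(j_0)$ after inserting Silverman's constant for a canonical model) than the paper's, which goes through R\'emond's tower of exponents. The trade-off is that your method is intrinsically limited to torsion: as soon as $\rk\Gamma\ge 1$ the coordinates are unbounded in height and the argument collapses, whereas the paper's machinery handles all finite-rank $\Gamma$ uniformly and is what makes Theorem~\ref{ThmIntro} possible. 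Your remark that between models of the form $y^2=x^3+ax^2+bx+c$ the change of variables is $x\mapsto u^2x'+r$, $y\mapsto u^3y'$ is correct, so the invariance-of-length step and the reduction to a model with coefficients explicit in $j$ are fine.
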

Heuristically, these results are consistent with the fact that the group structure on elliptic curves  is incompatible with the  additive structure of the affine line via the $x$ or $y$-coordinate maps.

 Theorems \ref{ThmIntro} and \ref{ThmTorIntro} (cf.\ Section \ref{SecConsequences}) are special cases of Theorem \ref{ThmMainArith} which concerns elliptic curves over number fields and more general patterns on algebraic points, not just arithmetic progressions on $x$ or $y$-coordinates of rational points.  Our proof of Theorem \ref{ThmMainArith} heavily uses Nevanlinna's value distribution theory for complex holomorphic maps in order to compute the Kawamata locus of certain sub-varieties of abelian varieties. This will allow us to apply Remond's quantitative version of Faltings' theorem on rational points of sub-varieties of abelian varieties (cf.\ \cite{Faltings2,Faltings3,Remond1}), which will be our main tool to control  $x$ and $y$-arithmetic progressions on elliptic curves.

Let us now discuss some applications of Theorem \ref{ThmIntro}.
Given an elliptic curve $E$ over $\Q$ and a squarefree integer $D$, we let $E^{(D)}$ be the quadratic twist of $E$ by $D$. The number of distinct prime factors of $D$ is denoted by $\omega(D)$. From Theorem \ref{ThmIntro} and standard rank bounds we deduce
\begin{corollary}[cf.\ Sec.\ \ref{SecPtRkBd}]\label{CoroTwistIntro} Given an elliptic curve $E$ over $\Q$, there is an effectively computable constant $C(E)>0$ such that for every squarefree integer $D$ we have
$$
\max\{\beta_x(E^{(D)}),\beta_y(E^{(D)})\} \le C(E)^{\omega(D)+1}.
$$
\end{corollary}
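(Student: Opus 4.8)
The plan is to deduce Corollary~\ref{CoroTwistIntro} from Theorem~\ref{ThmIntro} by combining it with a standard upper bound for the Mordell--Weil rank of a quadratic twist in terms of $\omega(D)$. First I would recall that for an elliptic curve $E/\Q$ with fixed good reduction outside a finite set depending only on $E$, the twist $E^{(D)}$ has conductor supported on the primes dividing the conductor of $E$ together with the primes dividing $D$; hence a two-descent (or the estimate via the size of the Selmer group) gives a bound of the shape $\rk E^{(D)}(\Q) \le a(E) + b(E)\cdot\omega(D)$ for effectively computable constants $a(E), b(E)$ depending only on $E$. This is the classical bound coming from counting $2$-torsion in the Tate--Shafarevich group or, more elementarily, from the fact that the $2$-Selmer group of $E^{(D)}$ embeds into a group whose $\F_2$-dimension grows linearly in the number of bad primes.

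Next I would observe that all quadratic twists $E^{(D)}$ share the same $j$-invariant $j_0 = j(E)$, so Theorem~\ref{ThmIntro} applies uniformly with the single constant $c(j_0)$: for every squarefree $D$,
\[
1 + \rk E^{(D)}(\Q) \ge c(j_0)\cdot \log\max\{\beta_x(E^{(D)}),\beta_y(E^{(D)})\}.
\]
Combining this with the rank bound $\rk E^{(D)}(\Q)\le a(E)+b(E)\,\omega(D)$ yields
\[
\log\max\{\beta_x(E^{(D)}),\beta_y(E^{(D)})\}\le \frac{1}{c(j_0)}\bigl(1 + a(E) + b(E)\,\omega(D)\bigr),
\]
and exponentiating gives $\max\{\beta_x(E^{(D)}),\beta_y(E^{(D)})\}\le C(E)^{\omega(D)+1}$ for a suitable effectively computable $C(E)>0$ (for instance $C(E) = \exp\bigl((1+a(E)+b(E))/c(j_0)\bigr)$, absorbing the two linear coefficients into a single base raised to $\omega(D)+1$). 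All constants here are effectively computable because $c(j_0)$ is given explicitly by~\eqref{Eqncj0}, and $a(E), b(E)$ come from an effective two-descent.

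The main obstacle is pinning down the explicit rank bound $\rk E^{(D)}(\Q)\le a(E)+b(E)\,\omega(D)$ with effective constants; this is where a little care is needed, since one must track how the $2$-Selmer group grows with the bad primes of the twist. I would invoke the standard statement that $\dim_{\F_2}\Sel_2(E^{(D)}/\Q)$ is bounded by a constant plus a fixed multiple of the number of primes of bad reduction of $E^{(D)}$, which in turn is at most a constant (depending on $E$) plus $\omega(D)$. With that input the rest is the elementary manipulation above, so the proof is short once the descent bound is quoted correctly.
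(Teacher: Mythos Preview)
Your proposal is correct and follows essentially the same approach as the paper: combine Theorem~\ref{ThmIntro} (using that all $E^{(D)}$ share the same $j$-invariant) with the standard linear rank bound $\rk E^{(D)}(\Q)\le c(E)+12\,\omega(D)$ coming from $2$-descent (stated as Lemma~\ref{LemmaRkBd} in the paper, via \cite{SilvermanAEC2ndEd} Ch.~VIII, Exercise~8.1), then exponentiate. The paper packages this as a special case of the slightly more general Corollary~\ref{CoroTwistGen}, but the argument is identical to yours.
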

In connection with conjectures on uniform boundedness of ranks, Theorem \ref{ThmIntro} directly gives
\begin{corollary} \label{CoroBddRkMohanty} Let $j_0\in\Q$. Suppose that the elliptic curves over $\Q$ of $j$-invariant equal to $j_0$ have uniformly bounded Mordell-Weil rank. Then there is a number $N(j_0)$ only depending on $j_0$ with the following property: For each elliptic curve $E$ over $\Q$ with $j$-invariant equal to $j_0$ we have $$
\max\{\beta_x(E),\beta_y(E)\} \le N(j_0).
$$
\end{corollary}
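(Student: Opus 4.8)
The plan is to obtain the corollary as an immediate quantitative consequence of Theorem \ref{ThmIntro}, so the whole argument is a short rearrangement of that inequality. First I would note that, by the Mordell--Weil theorem, $\rk E(\Q)$ is finite for every elliptic curve $E/\Q$; hence Theorem \ref{ThmIntro} already forces $\beta_x(E)$ and $\beta_y(E)$ to be finite, so $\max\{\beta_x(E),\beta_y(E)\}$ is a well-defined positive integer and the asserted inequality makes sense for each $E$ in the family.

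Next I would invoke the hypothesis of the corollary: there is a constant $R(j_0)$ such that $\rk E(\Q)\le R(j_0)$ for every elliptic curve $E/\Q$ with $j$-invariant equal to $j_0$. Combining this with Theorem \ref{ThmIntro}, and writing $c(j_0)>0$ for the constant produced there (admissibly given by \eqref{Eqncj0}), we get for every such $E$
$$
c(j_0)\cdot \log\max\{\beta_x(E),\beta_y(E)\}\ \le\ 1+\rk E(\Q)\ \le\ 1+R(j_0).
$$
Since $c(j_0)>0$, dividing by $c(j_0)$ and exponentiating yields
$$
\max\{\beta_x(E),\beta_y(E)\}\ \le\ \exp\!\left(\frac{1+R(j_0)}{c(j_0)}\right),
$$
so one may take $N(j_0)=\bigl\lceil \exp\bigl((1+R(j_0))/c(j_0)\bigr)\bigr\rceil$, which depends only on $j_0$ (through $c(j_0)$) and on the bound $R(j_0)$ furnished by the hypothesis. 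This proves the corollary.

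There is no real obstacle to overcome beyond citing Theorem \ref{ThmIntro}; the single point worth flagging is that the statement is \emph{conditional}, so $N(j_0)$ inherits whatever (in)effectivity the hypothetical uniform rank bound $R(j_0)$ carries, while the constant $c(j_0)$ entering the estimate is itself unconditional and explicit. If one wished, the same computation also shows that any uniform rank bound on the family is equivalent to a uniform bound on $\max\{\beta_x,\beta_y\}$ only in one direction in general, but that refinement is not needed here.
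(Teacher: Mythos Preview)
Your argument is correct and is exactly what the paper intends: the corollary is stated as a direct consequence of Theorem \ref{ThmIntro}, and your rearrangement of that inequality together with the hypothesized uniform rank bound $R(j_0)$ is precisely the intended one-line deduction. The paper gives no separate proof, so there is nothing further to compare.
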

We remark that, in view of \cite{PastenLangRks}, the assumption that elliptic curves over $\Q$ with a fixed $j$-invariant have uniformly bounded Mordell-Weil rank, is implied by a conjecture of Lang on the error terms in Diophantine approximation.

 Mohanty \cite{Mohanty1, Mohanty2} conjectured that there is a uniform bound for the length of $x$ and $y$-arithmetic progressions on Mordell elliptic curves $A_n: y^2=x^3+n$ with $n\in \Z$ sixth-power free. Mohanty in fact made the stronger conjecture that $\beta_x(A_n)$ and $\beta_y(A_n)$ are at most $4$, but this was disproved for $y$-arithmetic progressions by Lee and Velez \cite{LeeVelez}. Several constructions as well as extensive numerical searches have been carried out looking for long $x$ or $y$-arithmetic progressions on Mordell elliptic curves (cf.\ \cite{Ulas} and the references therein), but the record continues to be $x$-arithmetic progressions of length $4$ and $y$-arithmetic progressions of length $6$ as found by Lee and Velez \cite{LeeVelez}. 
  
Mohanty's conjecture on uniform boundedness of $x$ and $y$ arithmetic progressions on Mordell elliptic curves remains open. In support of this conjecture besides the search for examples, the first author used extensions of methods by Bogomolov and Vojta to show that the case of $y$-arithmetic progressions follows from the Bombieri-Lang conjecture for surfaces of general type \cite{GF}.  In addition, let us remark that Corollary \ref{CoroBddRkMohanty} with $j_0=0$ gives
\begin{corollary} The uniform boundedness conjecture for ranks of elliptic curves over $\Q$ with $j$-invariant equal to $0$ implies Mohanty's conjecture for both $x$ and $y$-arithmetic progressions.
\end{corollary}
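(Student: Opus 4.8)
The plan is to obtain this statement as an immediate specialization of Corollary~\ref{CoroBddRkMohanty} to $j_0 = 0$. First I would record the elementary fact that every Mordell elliptic curve $A_n : y^2 = x^3 + n$ has $j$-invariant equal to $0$: putting it in short Weierstrass form $y^2 = x^3 + Ax + B$ with $A = 0$ and $B = n$, the standard formula $j = 1728\cdot 4A^3/(4A^3 + 27B^2)$ gives $j = 0$ independently of $n$. Consequently the family $\{A_n : n \in \Z \text{ sixth-power free}\}$ sits inside the family of all elliptic curves over $\Q$ with $j$-invariant equal to $0$.

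Next I would invoke the hypothesis. Assuming that the elliptic curves over $\Q$ with $j$-invariant $0$ have uniformly bounded Mordell--Weil rank, Corollary~\ref{CoroBddRkMohanty} applied with $j_0 = 0$ furnishes a single number $N(0)$ such that $\max\{\beta_x(E), \beta_y(E)\} \le N(0)$ for every elliptic curve $E$ over $\Q$ with $j$-invariant $0$. Restricting this inequality to the subfamily of Mordell curves yields $\beta_x(A_n) \le N(0)$ and $\beta_y(A_n) \le N(0)$ for all sixth-power free $n$, which is exactly Mohanty's conjecture that the lengths of $x$- and $y$-arithmetic progressions on Mordell curves are uniformly bounded.

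There is no real obstacle here: the entire content has already been packaged into Theorem~\ref{ThmIntro} and Corollary~\ref{CoroBddRkMohanty}, and the only thing one must check by hand is the trivial $n$-independence of the $j$-invariant of $A_n$. If one wanted an explicit bound conditional on an explicit rank bound $r_0$ for curves of $j$-invariant $0$, one could simply feed $r_0$ into the inequality of Theorem~\ref{ThmIntro} with $c(0)$ as in \eqref{Eqncj0}, obtaining $N(0) \le \exp\bigl((1 + r_0)/c(0)\bigr)$.
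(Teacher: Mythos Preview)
Your proposal is correct and matches the paper's approach exactly: the paper states this corollary immediately after Corollary~\ref{CoroBddRkMohanty} with the one-line justification that it is the case $j_0=0$. The only additional detail you supply, that the Mordell curves $A_n$ all have $j$-invariant $0$, is the obvious computation the paper leaves implicit.
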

Theorem \ref{ThmIntro} also allows us to prove \emph{unconditionally} that Mohanty's conjecture holds on average, in the sense that the average  $\tau$-moments of $\beta_x(A_n)$ and $\beta_y(A_n)$ are finite for certain $\tau>0$.
\begin{theorem}[cf.\ Sec.\ \ref{SecAvgMordell}] \label{ThmAvgMohanty} For $x>0$ let $S(x)$ be the set of sixth-power free integers $n$ with $|n|\le x$. There are absolute constants $\tau,M>0$ such that for all $x>1$ we have
$$
\frac{1}{\# S(x)}\sum_{n\in S(x)} \max\{\beta_x(A_n),\beta_y(A_n)\}^\tau <M.
$$
\end{theorem}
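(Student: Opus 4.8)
The plan is to combine Theorem~\ref{ThmIntro} with classical results on the average size of Selmer groups in the Mordell family. Since every curve $A_n\colon y^2=x^3+n$ has $j$-invariant $0$, Theorem~\ref{ThmIntro} applied with $j_0=0$ produces an absolute constant $c_0=c(0)>0$ (explicitly given by \eqref{Eqncj0}) with
$$
\max\{\beta_x(A_n),\beta_y(A_n)\}\le \exp\!\Big(\tfrac{1+\rk A_n(\Q)}{c_0}\Big)
$$
for every nonzero integer $n$. Hence, for any $\tau>0$, writing $B=e^{\tau/c_0}>1$ we have $\max\{\beta_x(A_n),\beta_y(A_n)\}^\tau\le B\cdot B^{\rk A_n(\Q)}$; and since $\#S(x)\asymp x$, the theorem will follow once we show that for some absolute $\tau>0$ (equivalently, for some $B>1$ sufficiently close to $1$) one has $\sum_{n\in S(x)}B^{\rk A_n(\Q)}\ll x$ with an absolute implied constant.

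For the rank I would use the standard rational $3$-isogeny on $A_n$: the order-$3$ subgroup scheme $C_n\subset A_n$ consisting of $O$ together with the two points of $x$-coordinate $0$ is defined over $\Q$ and $A_n/C_n\cong A_{-27n}$; let $\phi_n\colon A_n\to A_{-27n}$ be the quotient isogeny and $\hat\phi_n\colon A_{-27n}\to A_n$ its dual. Descent along $\phi_n$ and $\hat\phi_n$ gives the usual inequality
$$
\rk A_n(\Q)\le s_n+t_n,\qquad s_n:=\dim_{\F_3}\Sel^{(\phi_n)}(A_n/\Q),\quad t_n:=\dim_{\F_3}\Sel^{(\hat\phi_n)}(A_{-27n}/\Q).
$$
Choosing $\tau$ small enough that $B^2\le 3$, the elementary bound $B^{s+t}\le\tfrac12(B^{2s}+B^{2t})$ together with $B^{2s_n}\le 3^{s_n}=|\Sel^{(\phi_n)}(A_n/\Q)|$ (and likewise for $t_n$) reduces everything to the two first-moment estimates
$$
\sum_{n\in S(x)}|\Sel^{(\phi_n)}(A_n/\Q)|\ll x\qquad\text{and}\qquad \sum_{n\in S(x)}|\Sel^{(\hat\phi_n)}(A_{-27n}/\Q)|\ll x .
$$

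These are of the type established by Fouvry's work on the Mordell family, and are close relatives of the Davenport--Heilbronn theorem on the mean size of the $3$-torsion in class groups of quadratic fields. The reason is that $A_n[\phi_n]$ is $\F_3$ twisted by the quadratic character $\chi_n$ of $\Q(\sqrt n)$ (and its Cartier dual $A_{-27n}[\hat\phi_n]$ is $\F_3$ twisted by the character of $\Q(\sqrt{-3n})$); at every prime $p\neq 3$ where $\chi_n$ is ramified one has $H^1(\Q_p,A_n[\phi_n])=0$ by a two-term local Euler characteristic computation (here $\deg\phi_n=3$ is a unit in the residue characteristic), so those primes impose no local condition at all, while at the remaining primes the Selmer condition is the unramified one. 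Consequently $s_n\le\dim_{\F_3}\mathrm{Cl}(\Q(\sqrt n))[3]+O(1)$ with an \emph{absolute} $O(1)$ (coming only from the places $2,3,\infty$, the units, and the square-full part of $n$, the last being harmless since the number of $n\le x$ with $j$ primes $p$ satisfying $p^2\mid n$ is $\ll x$ with $\tfrac1{j!}$-type decay in $j$), and similarly $t_n\le\dim_{\F_3}\mathrm{Cl}(\Q(\sqrt{-3n}))[3]+O(1)$. Summing $3^{(\,\cdot\,)}$ over $n\in S(x)$ and invoking the Davenport--Heilbronn estimate for the families of quadratic fields $\Q(\sqrt n)$ and $\Q(\sqrt{-3n})$ parametrized by sixth-power-free $n$ yields the two displayed bounds, and tracing back gives the theorem with, say, $\tau=\tfrac12 c(0)\log 3$ and an absolute $M$.

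The hard part is this last step: one has to carry out the $3$-descent on $A_n$ cleanly enough to isolate the vanishing $H^1(\Q_p,A_n[\phi_n])=0$, to bound the contribution of the finitely many remaining local conditions and of the square-full primes, and to invoke a form of the Davenport--Heilbronn estimate valid when the quadratic fields are ordered by sixth-power-free $|n|\le x$ so that it applies directly to the sum over $S(x)$ --- all of which is exactly what Fouvry's analysis of this family provides. It is essential to use this \emph{averaged} arithmetic-statistics input rather than a pointwise bound: a crude descent estimate of the shape $s_n\ll\omega(6n)$ would only give $\sum_{n\le x}B^{\rk A_n(\Q)}\ll x(\log x)^{\kappa}$ with $\kappa>0$, which diverges for every $B>1$ and does not yield a finite moment.
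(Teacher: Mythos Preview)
Your proposal is correct and follows the same approach as the paper: apply Theorem~\ref{ThmIntro} with $j_0=0$, choose $\tau=\tfrac{1}{2}c(0)\log 3$, and reduce to an average bound for $3^{(\rk A_n(\Q))/2}$ over $S(x)$ supplied by Fouvry. The only difference is presentational: the paper invokes Fouvry's result as a black box (stated there as $\sum_{n\in S(x)}3^{(\rk A_n(\Q))/2}<216\,x$ for large $x$) and finishes in a few lines, whereas you additionally sketch the mechanism behind Fouvry's bound via the $3$-isogeny descent, the vanishing of $H^1(\Q_p,A_n[\phi_n])$ at ramified primes $p\ne 3$, and Davenport--Heilbronn---this is accurate but not needed once Fouvry is cited.
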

The proof of Theorem \ref{ThmAvgMohanty} combines Theorem \ref{ThmIntro} with results of Fouvry \cite{Fouvry} on upper bounds for the average size of $3$-isogeny Selmer groups for Mordell elliptic curves, which in turn relies on the Davenport-Heilbronn theorem on $3$-torsion of class groups of quadratic fields. See \cite{BES} for the exact computation of the average size of the $3$-isogeny Selmer groups of Mordell elliptic curves.

More generally, Theorem \ref{ThmIntro} allows us to study arithmetic statistic questions related to $\beta_x(E)$ and $\beta_y(E)$ provided that we have good control on Selmer groups. Indeed, given a positive integer $m$ and an elliptic curve $E$ over $\Q$ we have the exact sequence
\begin{equation}\label{EqnExact}
0\to E(\Q)/mE(\Q)\to S_m(E)\to \Sha(E)[m]\to 0
\end{equation}
where $S_m(E)$ is the $m$-Selmer group and $\Sha(E)[m]$ is the $m$-torsion of the Shafarevich-Tate group. The classical proof of the Mordell-Weil theorem shows that $S_m(E)$ is finite, and from \eqref{EqnExact} we have
$$
m^{\rk E(\Q)}\le \#\left(E(\Q)/mE(\Q)\right)\le \# S_m(E).
$$
Therefore, estimates for the size of $m$-Selmer groups give bounds for \emph{exponential} functions of the rank, and we remark that there are several strong results in the literature for the arithmetic statistics for the size of $m$-Selmer group of elliptic curves. This  is well-suited for our applications, as Theorem \ref{ThmIntro} (and, more generally, Theorem \ref{ThmMainArith}) bounds the maximal length of an arithmetic progression in terms of an exponential function of the rank. For applications along these lines, it is crucial that our lower bound for the rank in Theorem \ref{ThmIntro} is logarithmic; see Section \ref{SecApplications} for details.

As the literature on arithmetic statistics of $m$-Selmer groups of elliptic curves is abundant and growing, we will only focus on the particularly convenient case of the  elliptic curves $B_{n}$ defined by $y^2=x^3-n^2x$. These elliptic curves are associated to the classical ``congruent number problem''. Here, results of Heath-Brown \cite{HB} allow us to control \emph{all} the average moments of $\beta_x(B_n)$ and $\beta_y(B_n)$.
\begin{theorem}[cf.\ Sec.\ \ref{SecAvgCongruent}]\label{ThmAvgCongruent} Let $Q(x)$ be the set of odd squarefree integers $n$ with $1\le n\le x$. Let $k >0$. There is a constant $M(k)>0$ depending only on $k$ such that for all $x>1$ we have
$$
\frac{1}{\#Q(x)}\sum_{n\in Q(x)} \max\{\beta_x(B_n),\beta_y(B_n)\}^k <M(k).
$$
\end{theorem}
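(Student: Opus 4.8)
The plan is to combine Theorem~\ref{ThmIntro} with the exact sequence~\eqref{EqnExact} so as to reduce the desired moment bound for $\max\{\beta_x(B_n),\beta_y(B_n)\}$ to a moment bound for the size of $2$-Selmer groups, and then to invoke the work of Heath-Brown \cite{HB}. First I would observe that every curve $B_n\colon y^2=x^3-n^2x$ has $j$-invariant $1728$, since it is the quadratic twist by $n$ of $y^2=x^3-x$; hence Theorem~\ref{ThmIntro} applies to all of them with one and the same constant $c:=c(1728)>0$, giving
$$
\max\{\beta_x(B_n),\beta_y(B_n)\}\ \le\ \exp\!\left(\frac{1+\rk B_n(\Q)}{c}\right)
$$
for all $n$. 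Raising to the $k$-th power, writing $e^{t\,\rk B_n(\Q)}=2^{(t/\log 2)\,\rk B_n(\Q)}$, and using $2^{\rk B_n(\Q)}\le \#\bigl(B_n(\Q)/2B_n(\Q)\bigr)\le \#S_2(B_n)$ from~\eqref{EqnExact} with $m=2$, we obtain
$$
\max\{\beta_x(B_n),\beta_y(B_n)\}^k\ \le\ e^{k/c}\cdot\bigl(\#S_2(B_n)\bigr)^{\lambda},\qquad \lambda:=\frac{k}{c\log 2}.
$$

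Summing over $n\in Q(x)$ and dividing by $\#Q(x)$, it now suffices to show that for every $\lambda>0$ there is $M'(\lambda)>0$ with $\frac{1}{\#Q(x)}\sum_{n\in Q(x)}\bigl(\#S_2(B_n)\bigr)^{\lambda}\le M'(\lambda)$ for all $x>1$; then one may take $M(k)=e^{k/c}\,M'\!\bigl(k/(c\log 2)\bigr)$. This is precisely what the results of Heath-Brown \cite{HB} on the congruent number curves deliver: his moment computations control, uniformly in $x$, all positive moments over odd squarefree $n\le x$ of the sizes of the $\phi$- and $\hat\phi$-isogeny Selmer groups attached to $B_n$, and since $\#S_2(B_n)$ is bounded in terms of the product of those two groups (here one uses $B_n[2]\subset B_n(\Q)$), the same uniform moment control holds for $\#S_2(B_n)$. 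As $\#Q(x)\gg x$ and any real $\lambda>0$ is dominated by an integer, the required bound follows.

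I expect the only delicate step to be the number-theoretic bookkeeping: one must make sure the theorems of \cite{HB} are applied in a form that genuinely bounds arbitrary positive moments of $\#S_2(B_n)$ uniformly in $x$ (not merely a limiting distribution), and one must pass correctly between the isogeny Selmer groups appearing there and the full $2$-Selmer group $S_2(B_n)$ in~\eqref{EqnExact}, keeping track of the contribution of the rational $2$-torsion of $B_n$. The remainder is a direct substitution, with the value of $c(1728)$ read off from~\eqref{Eqncj0}.
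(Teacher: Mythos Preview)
Your proposal is correct and follows essentially the same route as the paper: apply Theorem~\ref{ThmIntro} with $j_0=1728$, convert to a power of $\#S_2(B_n)$ via \eqref{EqnExact}, round the exponent up to an integer, and invoke Heath-Brown. One simplification worth noting: Theorem~1 of \cite{HB} already computes all integer moments of the full $2$-Selmer group $\#S_2(B_n)$ over odd squarefree $n$ (this is exactly what the paper records as Theorem~\ref{ThmHB}), so the detour through $\phi$- and $\hat\phi$-isogeny Selmer groups that you flag as ``delicate bookkeeping'' is unnecessary.
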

Let us mention that $x$-arithmetic progressions on the elliptic curves $B_n$ are studied in detail in \cite{BST} \emph{under the assumption $\rk B_n(\Q)\le 1$} and in \cite{Spearman} for a specific sub-family arising from an elliptic surface of rank $3$. The study in \cite{BST} is motivated by a connection with the problem of existence of a $3\times 3$ magic square formed by different integer squares \cite{Robertson}.


\section{Review of Nevanlinna theory}

In this section we set up the notation regarding Nevanlinna theory for holomorphic maps into complex projective varieties. Specially, we introduce the counting, proximity and height functions. We also recall the fundamental properties of these functions, including the First and Second Main Theorems. All the results in this section are standard and we include them for later reference. See for instance \cite{VojtaCIME} for proofs and more general versions of the results in this section.

\subsection{Definitions} Let $X$ be a smooth projective variety over $\C$ (we will identify the algebraic variety $X$ with the complex manifold $X(\C)$ if no confusion can arise).   Let $D$ be a divisor on $X$ and for each point $x\in X$ we let $\phi_{D,x}$ be a local equation for $D$. The support of $D$ is $\supp D$. Given a complex holomorphic map $f:\C\to X$ with image not contained in $\supp D$, we define for $r\ge 0$ 
$$
n_X(f, D, r)=\sum_{|z|\le r} \ord_z(f^*\phi_{D,f(z)}).
$$
For each $r$ the sum is finite. The \emph{counting function} is
$$
\begin{aligned}
N_X(f,D,r)&=\int_{0}^r\left(n_X(f, D, t)-n_X(f, D, 0)\right) \frac{dt}{t} + n(f, D, 0)\log r\\
&= \sum_{0<|z|\le r} \ord_z(f^*\phi_{D,f(z)})\log \frac{r}{|z|} +  \ord_0(f^*\phi_{D,f(z)})\log r.
\end{aligned}
$$
If $f(0)\notin \supp D$, then the counting function takes the simpler form
$$
N_X(f,D,r)=\int_{0}^r n(f, D, t) \frac{dt}{t}.
$$
A \emph{Weil function for $D$} is a function $\lambda_{X,D}:X-\supp D\to \R$ satisfying that for each $x\in X$ there is a complex neighborhood $U_x\subseteq X$ of $x$ and a continuous function $\alpha_x:U_x\to \R$ such that $\lambda_{X,D}(y)= - \log |\phi_{D,x}(y)| + \alpha_x(y)$ for all $y\in U_x-\supp D$. It is a standard result that Weil functions for $D$ exist, and they are unique up to a bounded continuous function on $X-\supp D$. 

With $f:\C\to X$ and $D$ as before and a choice of Weil function $\lambda_{X,D}$, the \emph{proximity function} is
$$
m_X(f,D,r)=\int_{0}^{2\pi } \lambda_{X,D}\left(f(r\cdot \exp( i \theta))\right)\frac{d\theta }{2\pi} .
$$
 The function $m_X(f,D,-):\R_{\ge 0}\to \R$ is well-defined up to adding a bounded function.

The \emph{Nevanlinna height} of $f$ with respect to $D$ is the function $T_X(f,D,-):\R_{\ge 0}\to \R$ defined by
$$
T_X(f,D,r)=N_X(f,D,r)+m_X(f,D,r).
$$
Due to the choice of Weil function in $m_X(f,D,-)$, we have that $T_X(f,D,-)$ is well defined up to a bounded function of $r$. 

\subsection{Basic properties} Let us briefly recall some of the fundamental properties of the counting, proximity, and height functions. We use Landau's notation $u(x)=O(v(x))$ for functions $u,v:\R_{\ge 0}\to \C$ with $v$ positive valued, to indicate that there is a constant $M$ independent of $x$ such that for all $x>0$ we have $|u(x)|\le M\cdot v(x)$.
\begin{lemma} Let $X$ be a smooth complex projective variety and $f:\C\to X$  a holomorphic map. 
\begin{itemize}
\item (Additivity) Let $D_1,D_2$ be divisors on $X$ such that the image of $f$ is not contained in $\supp D_1\cup \supp D_2$. Let $a,b\in \Z$. Then for all $r>0$ we have
$$
\begin{aligned}
N_X(f,aD_1+bD_2,r)&=aN_X(f,D_1,r)+bN_X(f,D_2,r)\\
m_X(f,aD_1+bD_2,r)&=am_X(f,D_1,r)+bm_X(f,D_2,r) + O(1)\\
T_X(f,aD_1+bD_2,r)&=aT_X(f,D_1,r)+bT_X(f,D_2,r) + O(1)
\end{aligned}
$$
where the error terms are independent of $r$.
\item (Effectivity) Let $D$ be an effective divisor on $X$ such that the image of $f$ is not contained in $\supp D$. Then for all $r\ge 1$ we have
$$
N_X(f,D,r)\ge 0,\quad 
m_X(f,D,r)\ge O(1),\quad
T_X(f,D,r)\ge O(1)
$$
where the error terms are independent of $r$.
\item (Functoriality) Let $Y$ be a smooth complex projective variety, $D$ a divisor on $Y$ and let $\gamma : X\to Y$ be a morphism. If the image of $\gamma\circ f$ is not contained in $\supp D$, then 
$$
\begin{aligned}
N_X(f,\gamma^*D,r)&=N_Y(\gamma \circ f,D,r)\\
m_X(f,\gamma^*D,r)&=m_Y(\gamma \circ f,D,r) + O(1)\\
T_X(f,\gamma^*D,r)&=T_Y(\gamma \circ f,D,r) + O(1)
\end{aligned}
$$
where the error terms are independent of $r$. 
\end{itemize}
\end{lemma}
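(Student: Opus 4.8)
The plan is to reduce each of the three statements to elementary facts about local equations, orders of vanishing, and Weil functions, using throughout the compactness of $X$ (and $Y$) to absorb local data into global $O(1)$ error terms. I would treat the counting functions first, since they carry no error term, then the proximity functions, and finally deduce the height statements from $T_X=N_X+m_X$.

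For \textbf{Additivity}: if $\phi_{D_1,x},\phi_{D_2,x}$ are local equations for $D_1,D_2$ near a point $x$, then $\phi_{D_1,x}^a\phi_{D_2,x}^b$ is a local equation for $aD_1+bD_2$, so additivity of $\ord_z$ on products of meromorphic germs gives $\ord_z(f^*(\phi_{D_1,f(z)}^a\phi_{D_2,f(z)}^b))=a\,\ord_z(f^*\phi_{D_1,f(z)})+b\,\ord_z(f^*\phi_{D_2,f(z)})$; summing over $|z|\le r$ yields additivity of $n_X$, and hence of $N_X$, since the operations $\int_0^r(\cdot)\,dt/t$ and $+(\cdot)\log r$ are linear. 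For $m_X$, I would observe that $a\lambda_{X,D_1}+b\lambda_{X,D_2}$ is a Weil function for $aD_1+bD_2$ on $X-(\supp D_1\cup \supp D_2)$; since two Weil functions for the same divisor differ by a continuous function on $X$ (the local $\alpha_x$'s of the definition differ continuously across $\supp D$), which is bounded by compactness, we get $\lambda_{X,aD_1+bD_2}=a\lambda_{X,D_1}+b\lambda_{X,D_2}+O(1)$, and integrating over $|z|=r$ gives additivity of $m_X$ up to $O(1)$. Then $T_X=N_X+m_X$ settles the height case.

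For \textbf{Effectivity}: when $D$ is effective its local equations are holomorphic, so $f^*\phi_{D,f(z)}$ is holomorphic near each $z$ and $\ord_z\ge 0$; thus $n_X(f,D,t)$ is nonnegative and nondecreasing, and for $r\ge 1$ both the integral term and $n_X(f,D,0)\log r$ are $\ge 0$, giving $N_X(f,D,r)\ge 0$. Taking $\lambda_{X,D}=-\log\|s_D\|$ for the canonical section $s_D$ of $\Ocal(D)$ equipped with a smooth metric rescaled so that $\|s_D\|\le 1$ on the compact $X$, we obtain a Weil function that is $\ge 0$, hence $m_X(f,D,r)\ge 0$ for this choice and $\ge O(1)$ in general; then $T_X=N_X+m_X\ge O(1)$ for $r\ge 1$. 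For \textbf{Functoriality}: the hypothesis that $\gamma\circ f$ is non-degenerate with respect to $\supp D$ forces $\gamma(X)\not\subseteq\supp D$, so $\gamma^*D$ is a genuine divisor whose local equations are $\phi_{D,\gamma(x)}\circ\gamma$ up to holomorphic units; hence $f^*\phi_{\gamma^*D,f(z)}$ and $(\gamma\circ f)^*\phi_{D,\gamma(f(z))}$ have equal $\ord_z$, so $n_X(f,\gamma^*D,r)=n_Y(\gamma\circ f,D,r)$ and the counting functions coincide. Likewise $\lambda_{Y,D}\circ\gamma$ is a Weil function for $\gamma^*D$, so $\lambda_{X,\gamma^*D}=\lambda_{Y,D}\circ\gamma+O(1)$ and integrating over $|z|=r$ gives $m_X(f,\gamma^*D,r)=m_Y(\gamma\circ f,D,r)+O(1)$; adding the two identities gives the height statement.

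The only non-formal point — and hence the main obstacle, such as it is — is justifying that the circle integrals defining $m_X$ converge and behave well despite the logarithmic singularities of Weil functions along the supports (the image of $f$ meets each support in a discrete set, so $\lambda_{X,D}\circ f$ restricted to circles is locally integrable and the exceptional radii form a measure-zero set), together with the repeated appeal to compactness to turn ``continuous on $X$'' or ``bounded on a dense open set'' into a uniform constant independent of $r$. Everything else is routine bookkeeping with local equations and orders of vanishing, and indeed all of it is standard; see \cite{VojtaCIME}.
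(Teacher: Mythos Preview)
Your proposal is correct and follows the standard argument; the paper itself does not prove this lemma but simply defers to \cite{VojtaCIME}, noting that ``all the results in this section are standard.'' Your sketch is essentially what one finds in such references, so there is nothing to compare.
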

\begin{lemma}[Ample height property] Let $X$ be a smooth complex projective variety. Let $f:\C\to X$ be a holomorphic map.  Let $D$ be an ample divisor on $X$ such that the image of $f$ is not contained in $\supp D$. If $f$ is non-constant, then $T_X(f,D,r)$ grows to infinity. 
\end{lemma}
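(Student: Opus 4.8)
The plan is to reduce the statement to the case of a hyperplane in projective space and then conclude by elementary function theory on $\C$. First I would dispose of the trivial case $\dim X=0$ and assume $\dim X\ge 1$. Since $D$ is ample, some positive multiple $mD$ is very ample and gives a closed embedding $\iota\colon X\hookrightarrow\Pro^n$ with $\iota^*\Ocal_{\Pro^n}(1)\cong\Ocal_X(mD)$. Because $f$ is non-constant, the Zariski closure of $\iota(f(\C))$ is positive-dimensional, so there is a hyperplane $H\subset\Pro^n$ containing neither this closure nor $\iota(X)$; then $\iota^*H$ is a well-defined effective divisor linearly equivalent to $mD$, and $\iota\circ f$ has image not contained in $H$. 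By the Additivity and Functoriality properties from the preceding lemma, together with the standard fact that the Nevanlinna height depends on a divisor only through its linear equivalence class up to a bounded term, one gets
$$
m\cdot T_X(f,D,r)=T_X(f,\iota^*H,r)+O(1)=T_{\Pro^n}(\iota\circ f,H,r)+O(1),
$$
and since $\iota\circ f$ is still non-constant it then suffices to treat the case $X=\Pro^n$, $D=H$ a hyperplane.

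Next I would make the Nevanlinna height of a map to $\Pro^n$ explicit. Let $g\colon\C\to\Pro^n$ be non-constant and $H=\{\sum_i a_ix_i=0\}$ a hyperplane with $g(\C)\not\subseteq H$. Since $\Pic(\C)=0$, the map $g$ admits a reduced representation $g=[g_0:\cdots:g_n]$ by entire functions without a common zero; write $G=(g_0,\ldots,g_n)$. Using the standard Weil function $\lambda_H([x])=\log\bigl(\|x\|/|\sum_i a_ix_i|\bigr)$ for $H$, with $\|\cdot\|$ the Euclidean norm on $\C^{n+1}$, and applying Jensen's formula to the entire function $\sum_i a_ig_i$ — which is not identically zero because $g(\C)\not\subseteq H$ — the identity $T=N+m$ yields the classical formula
$$
T_{\Pro^n}(g,H,r)=\frac{1}{2\pi}\int_0^{2\pi}\log\|G(re^{i\theta})\|\,d\theta+O(1)
$$
with error term independent of $r$. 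The point I would then exploit is that $v:=\log\|G\|=\tfrac12\log\bigl(\sum_i|g_i|^2\bigr)$ is subharmonic on $\C$.

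Finally I would argue by contradiction. If $T_{\Pro^n}(g,H,r)$ failed to tend to infinity then, being non-decreasing in $r$ (by Effectivity, or directly from subharmonicity of $v$), it would be bounded, so by the displayed formula the circular mean $I(r):=\tfrac{1}{2\pi}\int_0^{2\pi}v(re^{i\theta})\,d\theta$ would be bounded. Since $v$ is subharmonic, $I$ is a non-decreasing convex function of $\log r$ on all of $\R$, and such a function that is bounded must be constant; because the Riesz mass of $v$ on the disk of radius $r$ equals the derivative of $I$ with respect to $\log r$, this mass vanishes for every $r$, so $v$ is harmonic on $\C$. Writing $v=\mathrm{Re}\,\Phi$ for an entire function $\Phi$ (possible since $\C$ is simply connected) and setting $\phi=\exp\Phi$, we obtain a nowhere-vanishing entire function with $|\phi|=e^{v}=\|G\|$, so that $G/\phi=(g_0/\phi,\ldots,g_n/\phi)$ is a tuple of entire functions of constant Euclidean norm $1$. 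Each of these components is then a bounded entire function, hence constant by Liouville's theorem, so $G$ equals $\phi$ times a constant vector and $g=[G]$ is constant — contradicting the hypothesis. This would complete the proof.

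The main obstacle is the middle part: passing from the abstractly defined Nevanlinna height to the explicit integral $\tfrac{1}{2\pi}\int\log\|G\|\,d\theta$ through a careful application of Jensen's formula, and then deducing constancy of $g$ from boundedness of this integral using the monotonicity and convexity of circular means of subharmonic functions. The reduction to a hyperplane in $\Pro^n$ in the first paragraph, on the other hand, is purely formal given the properties of the Nevanlinna functions recorded above.
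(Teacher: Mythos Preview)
The paper does not actually prove this lemma: it is one of the standard facts in Section~2 that are stated without proof and deferred to references such as \cite{VojtaCIME}. So there is no ``paper's own proof'' to compare against in the strict sense.

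That said, your argument is correct and is essentially the standard textbook proof. The reduction to a hyperplane in $\Pro^n$ via very ampleness, functoriality, and the First Main Theorem is exactly how one usually proceeds, and your identification of the height with the Cartan characteristic
\[
T_{\Pro^n}(g,H,r)=\frac{1}{2\pi}\int_0^{2\pi}\log\|G(re^{i\theta})\|\,d\theta+O(1)
\]
via Jensen's formula is the classical computation. The final step---boundedness of the circular mean of the subharmonic function $\log\|G\|$ forces it to be harmonic, whence each $g_i/\phi$ is bounded entire and therefore constant by Liouville---is also the standard conclusion. One small remark: you do not need to invoke Effectivity to get monotonicity of $T$; as you note parenthetically, monotonicity of $I(r)$ follows directly from subharmonicity of $\log\|G\|$, and that is all you use. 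The argument that a convex function of $\log r$ on all of $\R$ which is bounded above must be constant is clean and correct, since $\|G\|>0$ everywhere (the $g_i$ have no common zero) ensures $I(r)$ is finite for all $r\ge 0$.
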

\begin{lemma}[First Main Theorem] Let $X$ be a smooth complex projective variety and let $f:\C\to X$ be a holomorphic map. Let $D_1,D_2$ be linearly equivalent divisors on $X$ such that the image of $f$ is not contained in $\supp D_1\cup \supp D_2$. Then $T_X(f,D_1,r)= T_X(f,D_2,r) + O(1)$.
\end{lemma}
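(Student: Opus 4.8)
The plan is to reduce the statement, via the additivity property already recorded, to the classical First Main Theorem on $\C$, which is itself a reformulation of Jensen's formula.

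First I would apply additivity of the Nevanlinna height (the basic properties lemma, with $a=1$, $b=-1$) to write
$$
T_X(f,D_1,r) - T_X(f,D_2,r) = T_X(f, D_1-D_2, r) + O(1),
$$
so that it suffices to prove $T_X(f, D_1-D_2, r) = O(1)$. Since $D_1$ and $D_2$ are linearly equivalent, $D_1-D_2 = \mathrm{div}(g)$ for some rational function $g$ on $X$, and $\supp\,\mathrm{div}(g) = \supp(D_1-D_2) \subseteq \supp D_1\cup \supp D_2$, so the image of $f$ is not contained in $\supp\,\mathrm{div}(g)$. Thus the problem becomes: for a principal divisor $\mathrm{div}(g)$ whose support does not contain the image of $f$, show that $T_X(f,\mathrm{div}(g),r)$ is bounded independently of $r$.

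The two key points here are that $g$ itself is a local equation for $\mathrm{div}(g)$ at every point of $X$, so that $\lambda_{X,\mathrm{div}(g)}(y) := -\log|g(y)|$ is a Weil function for $\mathrm{div}(g)$ (with all the $\alpha_x\equiv 0$), and that the composite $h = g\circ f$ is a non-zero meromorphic function on $\C$. For the latter: off the discrete set $f^{-1}(\supp\,\mathrm{div}(g))$ the function $h$ is holomorphic and non-vanishing; near a point $z_0$ with $f(z_0)$ in the indeterminacy locus of the rational map $g\colon X\dashrightarrow \Pro^1$ (a locus contained in $\supp\,\mathrm{div}(g)$) one writes $g=u/v$ locally with $u,v$ regular, and $u\circ f$, $v\circ f$ are not both identically zero near $z_0$, since otherwise the image of $f$ would lie in $\supp\,\mathrm{div}(g)$ by the identity theorem; hence $h = (u\circ f)/(v\circ f)$ is meromorphic at $z_0$, and the same argument shows $h\not\equiv 0,\infty$. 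Using this Weil function (permissible since Weil functions are unique up to $O(1)$) one gets $m_X(f,\mathrm{div}(g),r) = -\frac{1}{2\pi}\int_0^{2\pi}\log|h(re^{i\theta})|\,d\theta$, while $\ord_z(f^*\phi_{\mathrm{div}(g),f(z)}) = \ord_z(h)$ for every $z$, so $N_X(f,\mathrm{div}(g),r)$ is precisely the counting function of $h$ that counts zeros minus poles, including the standard term at the origin. Jensen's formula for $h$ now says exactly that $\frac{1}{2\pi}\int_0^{2\pi}\log|h(re^{i\theta})|\,d\theta = \log|c| + N_X(f,\mathrm{div}(g),r)$, where $c$ is the leading Laurent coefficient of $h$ at $0$; hence $T_X(f,\mathrm{div}(g),r) = N_X(f,\mathrm{div}(g),r) + m_X(f,\mathrm{div}(g),r) = -\log|c|$, a constant. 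This gives $T_X(f,\mathrm{div}(g),r) = O(1)$ and the theorem follows. (An alternative is to resolve the indeterminacy of $g$ by a blow-up of $X$, apply the functoriality property to the resulting morphism to $\Pro^1$, and quote the classical First Main Theorem on $\Pro^1$; the route above avoids this.)

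I expect the only genuinely delicate point to be that the hypothesis controls $f$ only relative to $\supp D_1\cup\supp D_2$, whereas the rational function $g$ carries an indeterminacy locus: one must verify that this locus sits inside $\supp\,\mathrm{div}(g)$, hence inside $\supp D_1\cup\supp D_2$, so that $g\circ f$ is honestly a meromorphic function on all of $\C$. Once this is settled, matching the normalization of $N_X$ (the term at the origin, and $|z|\le r$ versus $|z|<r$ on the circle) with the precise statement of Jensen's formula is routine bookkeeping.
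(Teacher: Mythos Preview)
Your proof is correct. The paper does not give a proof of this lemma: it is stated among the standard preliminaries of Nevanlinna theory, with a pointer to \cite{VojtaCIME} for details. Your argument---reduce via additivity to a principal divisor $\mathrm{div}(g)$, take $-\log|g|$ as the Weil function, and apply Jensen's formula to the meromorphic function $g\circ f$---is precisely the standard proof one finds in that reference, and your treatment of the indeterminacy locus of $g$ (contained in $\supp\,\mathrm{div}(g)\subseteq \supp D_1\cup\supp D_2$, so that $g\circ f$ is honestly meromorphic on $\C$) is the right way to handle the one genuine subtlety.
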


\subsection{Truncated counting functions} When $D$ is an effective reduced divisor on $X$ and the image of $f:\C\to X$ is not contained in $\supp D$, we define
$$
n^{(1)}_X(f,D,r)=\# \{z\in \C : |z|\le r\mbox{ and }f(z)\in \supp D\}
$$
and the \emph{truncated counting function}
$$
N^{(1)}_X(f,D,r)=\int_{0}^r\left(n^{(1)}_X(f, D, t)-n^{(1)}_X(f, D, 0)\right) \frac{dt}{t} + n^{(1)}_X(f, D, 0)\log r.
$$
We note that $N^{(1)}_X(f,D,r)\ge 0$ for $r\ge 1$. In general, the truncated counting function does not respect additivity. It is useful to observe that for an effective reduced divisor $D$ on $X$ and a holomorphic map $f:\C\to X$ whose image is not contained in $\supp D$, for all $r\ge 1$ we have
\begin{equation}\label{EqnN1NT}
0\le N^{(1)}_X(f,D,r)\le N_X(f,D,r)\le T_X(f,D,r) +O(1)
\end{equation}
where the last estimate is due to the effectivity property of $m_X(f,D,r)$.

\subsection{Second Main Theorem} Let us state the Second Main Theorem of Nevanlinna theory in the case of holomorphic maps to a curve $X$. For functions $u,v:\R_{\ge 0}\to \R$, the notation $u(r)\le_{exc} v(r)$ means that $u(r)\le v(r)$ holds for $r$ outside a  subset of $\R_{\ge 0}$ of finite Lebesgue measure. Similarly for $u(r)=_{exc}v(r)$. In addition, for $v$ positive valued we use Landau's notation $u(x)=o(v(x))$  to indicate that $\lim_{x\to \infty }u(x)/v(x)=0$.
\begin{theorem}[Second Main Theorem] Let $X$ be a smooth projective curve. Let $K$ be a canonical divisor on $X$ and let $A$ be an ample divisor on $X$. Let $\alpha_1,...,\alpha_q\in X$ be different points. Let $f:\C\to X$ be a holomorphic map different from the constant function $\alpha_j$ for each $j$, with image not contained in the support of $K$ and $A$. We have
$$
T_X(f,K,r) + \sum_{j=1}^q T_X(f,\alpha_j,r) \le_{exc} \sum_{j=1}^q N^{(1)}_X(f,\alpha_j,r) +O\left(\log \max\{1, T_X(f,A,r)\}\right) + o(\log r).
$$
\end{theorem}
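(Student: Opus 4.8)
The plan is to reduce the statement to the classical Second Main Theorem of Nevanlinna on $\Pro^1$. I would first clear away the trivial cases: if $f$ is constant, then since $f$ avoids every $\alpha_j$ both sides are $O(1)$ and there is nothing to prove; and if $f$ is non-constant then $X$ cannot have genus $\ge 2$ (a curve of genus $\ge 2$ is Kobayashi hyperbolic, hence receives no non-constant holomorphic map from $\C$), so only curves of genus $0$ and $1$ carry content. When $X\cong\Pro^1$ the assertion is exactly the classical Second Main Theorem, in its sharp form with error term $O(\log\max\{1,T\})+o(\log r)$; the remaining cases are handled by the reduction below, which in fact works uniformly in the genus.

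Next I would pick a finite morphism $\psi\colon X\to\Pro^1$ of some degree $d$ in sufficiently general position: after composing with a general element of $\Aut(\Pro^1)$ and, if necessary, replacing $\psi$ by a general map attached to a very positive pencil, I can arrange that the values $\beta_j:=\psi(\alpha_j)$ are pairwise distinct and different from $\infty$, and that the fibres $\psi^{-1}(\beta_j)$ and $\psi^{-1}(\infty)$ are reduced; this is a routine general-position argument. Let $c_1,\dots,c_s\in\A^1$ be the branch values of $\psi$ (by construction distinct from the $\beta_j$ and from $\infty$), and set $\phi:=\psi\circ f\colon\C\to\Pro^1$, which is non-constant because $\psi$ is finite and $f$ is not. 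I would then apply the classical Second Main Theorem to $\phi$ with the $q+s+1$ distinct points $\beta_1,\dots,\beta_q,c_1,\dots,c_s,\infty$, obtaining
$$
(q+s-1)\,T(\phi,r)\ \le_{exc}\ \sum_{j=1}^q N^{(1)}(\phi,\beta_j,r)+\sum_{i=1}^s N^{(1)}(\phi,c_i,r)+N^{(1)}(\phi,\infty,r)+O(\log\max\{1,T(\phi,r)\})+o(\log r),
$$
where $T(\phi,r)$ denotes the Nevanlinna characteristic (equal to $T_{\Pro^1}(\phi,\infty,r)+O(1)$).

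The core of the argument is then to translate every term of this inequality into quantities on $X$. The three inputs are: Riemann--Hurwitz, which lets me take as a canonical divisor $K=\psi^*(-2[\infty])+R$, where $R=\sum_P(e_P-1)[P]$ is the ramification divisor (supported over the $c_i$); the fibre identities $\psi^*v=\sum_{P\in\psi^{-1}(v)}e_P[P]$ with $\sum_P e_P=d$ for every $v\in\Pro^1$, which via functoriality, additivity, and the First Main Theorem give $\sum_{P\in\psi^{-1}(v)}e_P\,T_X(f,P,r)=T(\phi,r)+O(1)$, together with the analogous $n^{(1)}(\phi,v,t)=\sum_{P\in\psi^{-1}(v)}n^{(1)}_X(f,P,t)$; and the elementary bound \eqref{EqnN1NT}. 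Expanding $T_X(f,K,r)$ and each $T_X(f,\alpha_j,r)$ by the first two inputs, and the three counting sums by the third, I expect to find that the contributions of the non-distinguished points of the fibres $\psi^{-1}(\beta_j)$, $\psi^{-1}(c_i)$ and $\psi^{-1}(\infty)$ enter the right-hand side only through counting functions that are, by \eqref{EqnN1NT}, each dominated up to $O(1)$ by the corresponding height functions --- and those heights are exactly what gets subtracted off, thanks to the degree relation $\sum_P e_P=d$. After this cancellation the inequality should collapse to the desired bound with error $O(\log\max\{1,T(\phi,r)\})+o(\log r)$; finally, since $\psi^*\infty$ is an effective divisor of positive degree, hence ample, on the curve $X$, and since on a curve the Nevanlinna heights of any two positive-degree divisors are mutually bounded by constant multiples of one another up to $O(1)$ (compare them using the First Main Theorem and Riemann--Roch), one has $\log\max\{1,T(\phi,r)\}=\log\max\{1,T_X(f,A,r)\}+O(1)$, which yields the statement exactly as written.

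I expect the main obstacle to be this last bookkeeping step: after substituting the Riemann--Hurwitz and fibre-sum identities, one must check that all the ``surplus'' ramification and extra-fibre terms cancel rather than accumulate, so that no leftover factor of $d$ or extra $N^{(1)}$-terms survive and the error term retains its shape $O(\log\max\{1,T_X(f,A,r)\})+o(\log r)$. A secondary point demanding care is the general-position choice of $\psi$, and one should also keep in mind the alternative, more intrinsic proof by the Ahlfors--Shimizu negative-curvature method applied directly to the logarithmic canonical bundle $K_X+\sum_j\alpha_j$ when this has positive degree, which bypasses the reduction to $\Pro^1$ at the cost of differential-geometric estimates.
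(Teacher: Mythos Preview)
The paper does not supply a proof of this theorem: it is listed among the ``standard'' results of Section~2 with a blanket reference to \cite{VojtaCIME} for proofs. So there is no in-paper argument to compare against.

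Your reduction to the classical $\Pro^1$ case via a finite cover $\psi:X\to\Pro^1$ is a correct and standard route. The bookkeeping you anticipate does work out cleanly: writing $K=\psi^*(-2[\infty])+R$ and using $\sum_{P\in\psi^{-1}(v)}e_P\,T_X(f,P,r)=T(\phi,r)+O(1)$ for each target value $v$, the extra fibre points over the $\beta_j$ contribute at most $T(\phi,r)-T_X(f,\alpha_j,r)+O(1)$, the branch fibres contribute at most $sT(\phi,r)-T_X(f,R,r)+O(1)$, and the fibre over $\infty$ contributes at most $T(\phi,r)+O(1)$; substituting and cancelling yields exactly $T_X(f,K,r)+\sum_jT_X(f,\alpha_j,r)\le_{exc}\sum_jN^{(1)}_X(f,\alpha_j,r)+\mbox{error}$ with no stray factors of $d$. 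The comparison of error terms via $T(\phi,r)\asymp T_X(f,A,r)$ is also fine on a curve.

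Two small points worth tightening. First, your general-position claim for $\psi$ asks not only that each $\alpha_j$ be unramified but that the \emph{entire} fibre $\psi^{-1}(\beta_j)$ be reduced; you should say explicitly that for $d$ large you take a very ample $|L|$, embed $X\hookrightarrow\Pro^N$, and choose a generic pencil so that (i) the finitely many secant/tangent conditions forcing $\psi(\alpha_i)=\psi(\alpha_j)$ or $\beta_j$ (or $\infty$) to be a branch value are avoided. Second, your parenthetical that the argument ``works uniformly in the genus'' is true of the reduction step, but since you have already disposed of genus $\ge 2$ by hyperbolicity it is harmless either way.
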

When $f$ is constant, the result is trivial. If $f$ is non-constant, then the statement takes a simpler form, as the image of $f$ is not contained in the support of any divisor.

Due to Picard's theorem, the theorem is non-trivial only when $X$ has genus $0$ or $1$. We remark that a general second main theorem for algebraic varieties is still conjectural and pertains to the general setting of Vojta's conjectures, but we will only need the case of curves in this work.

\subsection{Meromorphic functions on $\C$} The case of $X=\Pro^1$ will be particularly relevant for us. Here we identify the Riemann sphere $\C_\infty=\C\cup\{\infty\}$ with $\Pro^1$ so that $\C$ corresponds to the affine chart $\{[1:\alpha] : \alpha\in \C\}\subseteq \Pro^1$ and $\infty$ corresponds to $[0:1]\in\Pro^1$. 

Let $\Mcal$ be the field of complex meromorphic functions on $\C$. Under the previous identifications, a function $h\in \Mcal$ can be seen as a holomorphic map $h:\C\to \Pro^1$. In this way, given $h\in \Mcal$ and a point $\alpha\in \C_\infty$ we can define $N(h,\alpha,r)$, $m(h,\alpha,r)$, $T(h,\alpha,r)$, and $N^{(1)}(h,\alpha,r)$ in the obvious way using the corresponding holomorphic map $h:\C\to \Pro^1$ (the subscript $\Pro^1$ is omitted as in this context it is clear). Furthermore,  we define 
$$
T(h,r) = T(h,\infty,r)
$$ 
and we observe that for any choice of $\alpha\in \C$, the First Main Theorem gives 
$$
T(h,r)= T(h,\alpha,r) + O(1)
$$ 
as functions of $r>0$, provided that $h$ is not the constant function $\alpha$. Also, since $-2\infty$ is a canonical divisor on $\Pro^1$, the Second Main Theorem takes the form
\begin{equation}\label{EqnSMTP1}
(q-2+o(1))T(h,r)\le_{exc} \sum_{j=1}^q N^{(1)}(h,\alpha_j,r) 
\end{equation}
where $\alpha_1,...,\alpha_q\in \C_\infty$ are different points and $f\in \Mcal$ a meromorphic function different from the constant function $\alpha_j$ for each $j$ (the error term can be made more precise if necessary).


\section{Preliminary lemmas on holomorphic maps}

\subsection{Comparison of counting functions} The next lemma will allow us to compare counting functions of various sorts.
\begin{lemma} \label{LemmaComparenN} Let $n_1(r),n_2(r):\R_{r\ge 0}\to \Z_{\ge 0}$ be functions whose points of discontinuity form a discrete set. Define 
$$
N_i(r)=\int_0^r(n_i(t)-n_i(0))t^{-1}dt + n_i(0)\log r
$$ 
for $i=1,2$. If $n_1(r)\le n_2(r)$ for all $r\ge 0$, then $N_1(r)\le N_2(r)+O(1)$.
\end{lemma}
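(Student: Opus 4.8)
The plan is to subtract the two defining formulas, absorb the logarithmic terms, and reduce to the triviality that the integral of a non-negative function over $[1,r]$ is non-negative. Put $n(t)=n_2(t)-n_1(t)$; by hypothesis this is again $\Z_{\ge 0}$-valued with a discrete set of discontinuities, and $n(t)\ge 0$ for every $t$. Since both the integral and the term $n_i(0)\log r$ depend additively on $n_i$, subtraction gives
$$
N_2(r)-N_1(r)=\int_0^r\frac{n(t)-n(0)}{t}\,dt+n(0)\log r ,
$$
so it suffices to bound the right-hand side from below by a constant independent of $r$. This can only be expected for $r\ge 1$: already for distinct constant functions $n_1<n_2$ the desired inequality fails as $r\to 0^+$, and in any case $r\ge 1$ is the range in which the $O(1)$ notation is used throughout the paper. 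I would therefore restrict to $r\ge 1$ from now on.

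For $r\ge 1$ the next step is to split $\int_0^r=\int_0^1+\int_1^r$ and, on the second piece, to separate $n(t)-n(0)$ and use $\int_1^r t^{-1}\,dt=\log r$. The term $-n(0)\log r$ produced this way cancels exactly the $+n(0)\log r$ coming from the definition, leaving
$$
N_2(r)-N_1(r)=\Big(\int_0^1\frac{n(t)-n(0)}{t}\,dt\Big)+\int_1^r\frac{n(t)}{t}\,dt .
$$
Evaluating the original formulas at $r=1$ identifies the parenthesized quantity as $C:=N_2(1)-N_1(1)$, a constant independent of $r$; it is finite precisely because $N_1$ and $N_2$ are assumed real-valued, which (together with the discreteness of the discontinuity sets) is also what makes all the integrals above convergent.

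To finish, note that $n(t)\ge 0$ and $r\ge 1$ force $\int_1^r n(t)t^{-1}\,dt\ge 0$, hence $N_2(r)-N_1(r)\ge C$, i.e.\ $N_1(r)\le N_2(r)-C=N_2(r)+O(1)$ for all $r\ge 1$. I do not expect a genuine obstacle in this argument; the only points that need a little care are the cancellation of the $\log r$ terms, the (harmless) restriction to $r\ge 1$, and the observation that $C$ is automatically finite under the standing hypotheses.
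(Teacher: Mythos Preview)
Your argument is correct and is essentially the same as the paper's: both reduce by linearity to a single non-negative function (the paper sets $n_1\equiv 0$, you work with $n=n_2-n_1$), restrict to $r\ge 1$, split the integral at $1$ so that the $\log r$ terms cancel, and conclude because $\int_1^r n(t)t^{-1}\,dt\ge 0$. Your write-up is in fact a bit more explicit about the cancellation and the identification of the constant as $N_2(1)-N_1(1)$, but there is no substantive difference.
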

\begin{proof} By linearity, we may assume that $n_1(r)=0$ for all $r$, and we need to show that $N_2(r)$ is bounded from below by a constant. It suffices to consider $r\ge 1$, in which case we have
$$
N_2(r)=\int_0^r(n_2(t)-n_2(0))\frac{dt}{t} +n_2(0)\log r  \ge \int_0^1(n_2(t)-n_2(0))\frac{dt}{t}.
$$
The last quantity is a constant.
\end{proof}
\subsection{Holomorphic maps to elliptic curves}

\begin{lemma} \label{LemmaSMTE} Let $E$ be a complex elliptic curve and let $\alpha\in E$. Let $f:\C\to E$ be a non-constant holomorphic map. Then
$$
T_E(f,\alpha,r) =_{exc} (1+o(1))N_E(f,\alpha,r)=_{exc} (1+o(1))N^{(1)}_E(f,\alpha,r).
$$ 
Furthermore, for every effective non-zero divisor $D$ we have
$$
T_E(f,D,r) =_{exc} (1+o(1))N_E(f,D,r).
$$
\end{lemma}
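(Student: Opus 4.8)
The plan is to deduce everything from the Second Main Theorem for the curve $X=E$, using that a canonical divisor on an elliptic curve is trivial, i.e. $K\sim 0$.

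First I would prove the statement for a single point $\alpha\in E$. Since $K\sim 0$ on $E$, the First Main Theorem gives $T_E(f,K,r)=O(1)$, so the Second Main Theorem applied with the single point $\alpha_1=\alpha$ and any ample divisor $A$ reads
$$
T_E(f,\alpha,r)\le_{exc} N^{(1)}_E(f,\alpha,r)+O(\log\max\{1,T_E(f,A,r)\})+o(\log r).
$$
Because $f$ is non-constant and $A$ is ample, the ample height property ensures $T_E(f,A,r)\to\infty$, and by the First Main Theorem $T_E(f,A,r)=(\deg A)\,T_E(f,\alpha,r)+O(1)$, so the two error terms on the right are $O(\log T_E(f,\alpha,r))+o(\log r)=o(T_E(f,\alpha,r))$ once we know $T_E(f,\alpha,r)$ grows at least like a positive power of $\log r$ — which again follows from the ample height property together with the standard lower bound $T\gg \log r$ for non-constant $f$. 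Combining with the trivial inequalities $N^{(1)}_E(f,\alpha,r)\le N_E(f,\alpha,r)\le T_E(f,\alpha,r)+O(1)$ from \eqref{EqnN1NT}, we get
$$
T_E(f,\alpha,r)\le_{exc}N^{(1)}_E(f,\alpha,r)+o(T_E(f,\alpha,r))\le N_E(f,\alpha,r)+o(T_E(f,\alpha,r))\le T_E(f,\alpha,r)+o(T_E(f,\alpha,r)),
$$
which forces both $N_E(f,\alpha,r)=_{exc}(1+o(1))T_E(f,\alpha,r)$ and $N^{(1)}_E(f,\alpha,r)=_{exc}(1+o(1))T_E(f,\alpha,r)$, proving the first chain of equalities.

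For the statement about an arbitrary effective non-zero divisor $D$, write $D=\sum_{i=1}^k m_i\alpha_i$ with $m_i\ge 1$ and the $\alpha_i$ distinct. By additivity of the height (and of the untruncated counting function) together with the First Main Theorem, $T_E(f,D,r)=\sum_i m_i T_E(f,\alpha_i,r)+O(1)$ and $N_E(f,D,r)=\sum_i m_i N_E(f,\alpha_i,r)$. Applying the single-point result to each $\alpha_i$ and using that all the $T_E(f,\alpha_i,r)$ are mutually comparable up to $O(1)$ (First Main Theorem again, since any two points on $E$ differ by a degree-zero divisor that is a bounded error in height — more carefully, each $T_E(f,\alpha_i,r)$ equals $T_E(f,A,r)/\deg A+O(1)$ for a fixed ample $A$), the $o(\cdot)$ terms aggregate to $o(T_E(f,D,r))$, giving $N_E(f,D,r)=_{exc}(1+o(1))T_E(f,D,r)$.

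The main obstacle is bookkeeping the error terms: one must be sure that the $O(\log\max\{1,T_E(f,A,r)\})$ and $o(\log r)$ appearing in the Second Main Theorem are genuinely $o(T_E(f,\alpha,r))$, which requires invoking both the ample height property (to get $T\to\infty$) and the elementary fact that a non-constant holomorphic $f:\C\to E$ has $T_E(f,A,r)$ growing at least like $\log r$ (equivalently, $T_E(f,A,r)/\log r\to\infty$), so that $\log r$ is itself $o(T)$. I would state this growth fact explicitly (it is immediate from the Poisson--Jensen formula, or can be cited from \cite{VojtaCIME}) and then the rest is a mechanical combination of the properties already recorded in the previous section.
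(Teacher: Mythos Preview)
Your approach is essentially identical to the paper's: apply the Second Main Theorem on $E$ with $K\sim 0$ and the single target $\alpha$, note that the error term is $o(T_E(f,\alpha,r))$ because $f$ is transcendental, combine with \eqref{EqnN1NT}, and then extend to general effective $D$ by additivity.

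One technical slip to fix: your invocations of the First Main Theorem to get $T_E(f,A,r)=(\deg A)\,T_E(f,\alpha,r)+O(1)$ and ``each $T_E(f,\alpha_i,r)$ equals $T_E(f,A,r)/\deg A+O(1)$'' are not justified, because on an elliptic curve two distinct points are numerically equivalent but \emph{not} linearly equivalent (the parenthetical ``any two points on $E$ differ by a degree-zero divisor that is a bounded error in height'' conflates degree-zero with principal). The First Main Theorem only compares linearly equivalent divisors. The cleanest repair is what the paper does: simply take $A=\alpha$ in the Second Main Theorem, since a single point on a curve is already ample; then no comparison is needed at all. For the second part, you do not need the $T_E(f,\alpha_i,r)$ to agree up to $O(1)$ either---effectivity gives $T_E(f,\alpha_i,r)\ge O(1)$, hence each is at most $T_E(f,D,r)+O(1)$, which is enough to aggregate the $o(\cdot)$ terms. (If you really want the stronger comparison, it holds up to a factor $(1+o(1))$ rather than $O(1)$; this is the content of the Liao--Ru result cited later in the paper.)
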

\begin{proof} A canonical divisor for $E$ is $D=0$, and the divisor $\alpha$ is ample. By the Second Main Theorem
$$
T_E(f,\alpha,r) \le_{exc} N^{(1)}_E(f,\alpha,r) + O(\log \max\{1, T_E(f,\alpha,r) \} )+ o(\log r).
$$
As $f$ is transcendental, the error term is $o(T_E(f,\alpha,r))$. The first part follows from \eqref{EqnN1NT}. The second part is deduced by additivity and the fact that positive degree divisors on curves are ample.
\end{proof}

\subsection{Meromorphic functions arising from elliptic curves}

We will be considering meromorphic functions $h\in \Mcal$ that can be written in the form $h=g\circ \phi$ with $E$ an elliptic curve over $\C$, $\phi:\C\to E$ holomorphic, and $g:E\to \Pro^1$ a rational function. Meromorphic functions $h$ of this type have better value distribution properties than general meromorphic functions.
\begin{lemma}\label{LemmaSMTfromE} Let $E$ be a complex elliptic curve, $g:E\to \Pro^1$ a non-constant morphism of degree $d$, and $\phi:\C\to E$ a non-constant holomorphic map. Let $h=g\circ\phi\in \Mcal$ and let $\alpha\in \C_\infty$. We have
\begin{equation}\label{EqnE1}
N(h,\alpha,r)=_{exc}(1+o(1))T(h,r)
\end{equation}
and
\begin{equation}\label{EqnE2}
N^{(1)}(h,\alpha,r)=_{exc}\left(\frac{\#g^{-1}(\alpha)}{d}+o(1)\right)T(h,r).
\end{equation}
\end{lemma}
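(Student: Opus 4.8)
The plan is to reduce everything to the First and Second Main Theorems for the meromorphic function $h$ together with functoriality under the map $g$. First I would apply the functoriality property of the Nevanlinna functions to the morphism $g:E\to\Pro^1$: pulling back the divisor $[\alpha]$ on $\Pro^1$ gives $g^*[\alpha]=\sum_{P\in g^{-1}(\alpha)}e_P[P]$ on $E$, where $e_P$ is the ramification index, and $\deg g^*[\alpha]=d$. Hence $T(h,\alpha,r)=T_E(\phi,g^*[\alpha],r)+O(1)$, and since $\phi$ is non-constant and $g^*[\alpha]$ is an effective divisor of positive degree $d$ on the elliptic curve $E$, Lemma \ref{LemmaSMTE} gives $T_E(\phi,g^*[\alpha],r)=_{exc}(1+o(1))N_E(\phi,g^*[\alpha],r)$. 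By additivity of the (untruncated) counting function, $N_E(\phi,g^*[\alpha],r)=\sum_P e_P N_E(\phi,[P],r)=N(h,\alpha,r)$ (using functoriality again to identify the pullback counting function with the counting function of $h$). Combining these with the First Main Theorem $T(h,\alpha,r)=T(h,r)+O(1)$ yields \eqref{EqnE1}.

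For \eqref{EqnE2}, the point is that the truncated counting function does \emph{not} add up as nicely, but it can still be computed fiberwise. I would observe that $f=\phi$ hits the fiber $g^{-1}(\alpha)$ exactly when $h$ takes the value $\alpha$, and for a generic point the fiber consists of $\#g^{-1}(\alpha)$ distinct points; more carefully, $N^{(1)}(h,\alpha,r)$ counts, with weight $\log(r/|z|)$, the points $z$ with $h(z)=\alpha$, each counted once regardless of multiplicity. Writing $g^{-1}(\alpha)=\{P_1,\dots,P_k\}$ with $k=\#g^{-1}(\alpha)$, we have $N^{(1)}(h,\alpha,r)=N^{(1)}_E\!\left(\phi,\sum_{i=1}^k[P_i],r\right)$ because the reduced divisor $g^*[\alpha]_{\mathrm{red}}=\sum_i[P_i]$ has support equal to the fiber, and $\phi^{-1}$ of this support is exactly the zero set being counted (here one uses that, since $g$ is unramified away from finitely many points and $\phi$ is non-constant, the preimages under $\phi$ of the finitely many ramification points contribute a discrete set which only affects the count by a bounded amount — this is where I would be a little careful). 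Now each $N^{(1)}_E(\phi,[P_i],r)=_{exc}(1+o(1))N_E(\phi,[P_i],r)=_{exc}(1+o(1))T_E(\phi,[P_i],r)$ by Lemma \ref{LemmaSMTE}, and since all degree-one divisors on $E$ are algebraically equivalent and have the same height up to $O(1)$ (by Lemma \ref{LemmaSMTE} each equals $(1+o(1))N_E(\phi,\cdot,r)$, and these are mutually comparable), we get $N^{(1)}_E(\phi,[P_i],r)=_{exc}(1+o(1))\cdot\frac1d\,T_E(\phi,g^*[\alpha],r)$. Summing over $i=1,\dots,k$ and using $T_E(\phi,g^*[\alpha],r)=_{exc}(1+o(1))T(h,r)$ from the first part gives $N^{(1)}(h,\alpha,r)=_{exc}\left(\frac{k}{d}+o(1)\right)T(h,r)$, which is \eqref{EqnE2}.

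The main obstacle I anticipate is the bookkeeping at ramification points of $g$: one must argue that the points $z\in\C$ with $\phi(z)\in g^{-1}(\alpha)$ but where $g$ is ramified (or where $h=g\circ\phi$ has higher-order contact with $\alpha$) do not spoil the clean fiberwise count for $N^{(1)}$. Since there are only finitely many ramification points of $g$ and $\phi$ is transcendental, these do not alter the leading asymptotics, but I would want to phrase this via the inequality $N^{(1)}_E(\phi,g^*[\alpha]_{\mathrm{red}},r)\le N^{(1)}(h,\alpha,r)\le \sum_i N^{(1)}_E(\phi,[P_i],r)$ together with $N^{(1)}_E(\phi,g^*[\alpha]_{\mathrm{red}},r)=\sum_i N^{(1)}_E(\phi,[P_i],r)+O(1)$ (the reduced pullback divisor has support $\{P_1,\dots,P_k\}$, but its preimage set under $\phi$ may differ from the zero set of $h-\alpha$ only over the finitely many branch points, contributing $O(1)$). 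Everything else is a routine combination of additivity, functoriality, the First Main Theorem, and Lemma \ref{LemmaSMTE}.
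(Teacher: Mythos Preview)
Your overall strategy matches the paper's proof: functoriality reduces everything to counting and height functions on $E$, and Lemma \ref{LemmaSMTE} handles the asymptotics there. Two comments, one minor and one substantive.

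First, your worry about ramification in the truncated counting function is misplaced. The identity $N^{(1)}(h,\alpha,r)=\sum_{\beta\in g^{-1}(\alpha)}N^{(1)}_E(\phi,\beta,r)$ holds \emph{exactly}, with no $O(1)$ correction: $h(z)=\alpha$ if and only if $\phi(z)\in g^{-1}(\alpha)=\{P_1,\dots,P_k\}$, and since the $P_i$ are distinct, each such $z$ contributes to exactly one summand on the right. No bookkeeping at branch points is needed, and the paper states this identity without further comment.

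Second, the real gap is the step where you assert that ``all degree-one divisors on $E$ are algebraically equivalent and have the same height up to $O(1)$.'' This is false as written: two distinct points $P,Q\in E$ give divisors $[P],[Q]$ that are \emph{not} linearly equivalent (only numerically so), so the First Main Theorem does not apply and one cannot conclude $T_E(\phi,[P],r)=T_E(\phi,[Q],r)+O(1)$. Your parenthetical justification (``by Lemma \ref{LemmaSMTE} each equals $(1+o(1))N_E(\phi,\cdot,r)$, and these are mutually comparable'') is circular --- comparability of the $N_E(\phi,[P_i],r)$ is precisely what is at stake. What one actually needs is that numerically equivalent ample divisors on $E$ have heights agreeing up to a factor $(1+o(1))$; the paper obtains $d\cdot T_E(\phi,\beta,r)=(1+o(1))T_E(\phi,g^*(\infty),r)$ by invoking Lemma 3.2 of Liao--Ru \cite{LiaoRu} for exactly this purpose. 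Without that (or an equivalent input), your argument for \eqref{EqnE2} does not close.
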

\begin{proof} By functoriality of the counting function we have
$$
N(h,\alpha,r)=N(g\circ \phi,\alpha,r)=N_E(\phi,g^*\alpha,r).
$$
By Lemma \ref{LemmaSMTE} and functoriality of the height
$$
N_E(\phi,g^*\alpha,r) =_{exc} (1+o(1))T_E(\phi,g^*\alpha,r)=(1+o(1))T(h,\alpha,r).
$$
which proves \eqref{EqnE1}. 

For $z_0\in\C$ we have $\phi(z_0)\in g^{-1}(\alpha)$ if and only if $h(z_0)=\alpha$.  Together with Lemma \ref{LemmaSMTE}, this gives
$$
N^{(1)}(h,\alpha,r)=\sum_{\beta\in g^{-1}(\alpha)}N^{(1)}_E(\phi,\beta,r) =_{exc}  (1+o(1))\sum_{\beta\in g^{-1}(\alpha)}T_E(\phi,\beta,r).
$$
The divisor $g^*(\infty)$ on $E$ is ample of degree $d$, hence, it is numerically equivalent to the divisor $d\cdot \beta$ for any given point $\beta\in E$.  Lemma 3.2 in \cite{LiaoRu} allows us to compare the height for two effective, ample, numerically equivalent divisors, and we get
$$
d\cdot T_E(\phi,\beta,r)=T_E(\phi,d\cdot \beta,r) +O(1)= (1+o(1))T_E(\phi,g^*(\infty),r)
$$
from which we deduce
$$
\begin{aligned}
\sum_{\beta\in g^{-1}(\alpha)} T_E(\phi,\beta,r) &= \frac{1}{d}\sum_{\beta\in g^{-1}(\alpha)}d\cdot T_E(\phi,\beta,r) 
= \frac{1}{d}\sum_{\beta\in g^{-1}(\alpha)} (1+o(1))T_E(\phi,g^*(\infty),r)\\
& = \left(\frac{\#g^{-1}(\alpha)}{d} +o(1)\right)T_E(\phi,g^*(\infty),r) = \left(\frac{\#g^{-1}(\alpha)}{d} +o(1)\right)T(h,r).
\end{aligned}
$$
This proves \eqref{EqnE2}.
\end{proof}

\subsection{GCD counting functions} Given non-constant meromorphic functions $h_1,h_2\in \Mcal$ we define
$$
n_{GCD}(h_1,h_2,r)=\sum_{|z|\le r} \min\{\ord^+_z(h_1),\ord^+_z(h_2)\}.
$$
The \emph{GCD counting function} is
$$
N_{GCD}(h_1,h_2,r)=\int_{0}^r\left(n_{GCD}(h_1,h_2,t)-n_{GCD}(h_1,h_2,0)\right) \frac{dt}{t} + n_{GCD}(h_1,h_2,0)\log r.
$$
From Lemma \ref{LemmaComparenN} and the effectivity for proximity functions we deduce the \emph{trivial GCD bound}
$$
N_{GCD}(h_1,h_2,r)\le N(h_j,0,r) \le T(h_j,0,r) + O(1)= T(h_j,r) +O(1), \quad \mbox{ for }j=1,2.
$$
There are several works in the literature on the problem of improving this trivial bound for the GCD counting function under various assumptions, see for instance \cite{PastenWang, LiuYu, LevinWang}. For our purposes, the following will suffice.
\begin{lemma}\label{LemmaGCD} Let $\alpha_1,...,\alpha_q\in \C$ be distinct and let $h_1,h_2\in \Mcal$ be non-constant meromorphic functions. We have
$$
\sum_{j=1}^q N_{GCD}(h_1-\alpha_j,h_2-\alpha_j,r) \le T(h_1,r)+T(h_2,r) -N_{GCD}(1/h_1,1/h_2,r) + O(1).
$$
\end{lemma}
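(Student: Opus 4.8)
The plan is to reduce the sum of GCD counting functions to a single counting function via a partition-of-poles argument, and then apply the trivial GCD bound together with the First Main Theorem. For a point $z_0\in\C$, I would track the contribution of $z_0$ to the left-hand side. Since the $\alpha_j$ are distinct, at most one index $j$ can have $h_1(z_0)=\alpha_j$ and at most one index (possibly the same, possibly different) can have $h_2(z_0)=\alpha_j$; hence for each $z_0$, $\min\{\ord^+_{z_0}(h_1-\alpha_j),\ord^+_{z_0}(h_2-\alpha_j)\}$ is nonzero for \emph{at most one} value of $j$ — precisely when $z_0$ is a common zero of $h_1-\alpha_j$ and $h_2-\alpha_j$, which forces $h_1(z_0)=h_2(z_0)=\alpha_j$. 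So $\sum_{j=1}^q n_{GCD}(h_1-\alpha_j,h_2-\alpha_j,r)=\sum_{|z|\le r}\min\{\ord^+_z(h_1-h_2 \text{ at } \alpha_j),\dots\}$, but more usefully this whole sum is controlled by counting, with multiplicity $\min\{\ord^+_z(h_1-h_2),\text{something}\}$, the points where $h_1$ and $h_2$ agree at a finite value. I would instead bound it directly by $N(h_1-h_2,0,r)$ after checking $h_1-h_2$ is not identically zero (if it is, then $h_1=h_2$ and one handles that degenerate case by hand, or the statement is read with the convention that the needed functions are non-constant); indeed each such $z_0$ is a zero of $h_1-h_2$ of order at least $\min\{\ord^+_{z_0}(h_1-\alpha_j),\ord^+_{z_0}(h_2-\alpha_j)\}$.

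Next I would pass from $n$ to $N$ using Lemma \ref{LemmaComparenN}, giving $\sum_j N_{GCD}(h_1-\alpha_j,h_2-\alpha_j,r)\le N(h_1-h_2,0,r)+O(1)$. Now apply the First Main Theorem / trivial height bound: $N(h_1-h_2,0,r)\le T(h_1-h_2,0,r)+O(1)=T(h_1-h_2,r)+O(1)$, and then the standard subadditivity $T(h_1-h_2,r)\le T(h_1,r)+T(h_2,r)+O(1)$ (which follows from the additivity and functoriality properties, since $h_1-h_2$ factors through the sum/difference morphism on $\Pro^1\times\Pro^1$). This already gives $\sum_j N_{GCD}\le T(h_1,r)+T(h_2,r)+O(1)$, which is the claimed bound \emph{without} the subtracted term $N_{GCD}(1/h_1,1/h_2,r)$.

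To get the sharper inequality with $-N_{GCD}(1/h_1,1/h_2,r)$, I would be more careful about the pole contributions to $T(h_1-h_2,r)$. The point is that a common pole $z_0$ of $h_1$ and $h_2$ is a pole of $h_1-h_2$ of order at most $\max\{\ord^-_{z_0}(h_1),\ord^-_{z_0}(h_2)\}$, which is $\ord^-_{z_0}(h_1)+\ord^-_{z_0}(h_2)-\min\{\ord^-_{z_0}(h_1),\ord^-_{z_0}(h_2)\}$. Summing, $N(h_1-h_2,\infty,r)\le N(h_1,\infty,r)+N(h_2,\infty,r)-n_{GCD}$-type term where the common pole multiplicities of $h_1,h_2$ are exactly $\min\{\ord^+_z(1/h_1),\ord^+_z(1/h_2)\}$, i.e.\ they assemble into $N_{GCD}(1/h_1,1/h_2,r)$. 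Combining $T(h_1-h_2,r)=N(h_1-h_2,\infty,r)+m(h_1-h_2,\infty,r)+O(1)$ with the pole-counting estimate and the obvious proximity bound $m(h_1-h_2,\infty,r)\le m(h_1,\infty,r)+m(h_2,\infty,r)+O(1)$ yields $T(h_1-h_2,r)\le T(h_1,r)+T(h_2,r)-N_{GCD}(1/h_1,1/h_2,r)+O(1)$, and chaining this with $N(h_1-h_2,0,r)\le T(h_1-h_2,r)+O(1)$ and the first-paragraph estimate finishes the proof.

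The main obstacle is getting the pole bookkeeping exactly right: one must carefully verify that the order of a common pole of $h_1$ and $h_2$ in $h_1-h_2$ drops by precisely at least the minimum of the two pole orders (cancellation can only help, and in the worst case there is no cancellation of the leading terms but then the orders are unequal and the inequality still holds), and that these common-pole multiplicities genuinely sum to $N_{GCD}(1/h_1,1/h_2,r)$ rather than merely being bounded by it. I would also double-check the degenerate case where $h_1-h_2$ happens to vanish identically or where $h_1,h_2$ share no finite value, where the left side is (up to $O(1)$) zero and the inequality is immediate from effectivity.
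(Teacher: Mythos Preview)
Your proof is correct and reaches the same conclusion, but by a genuinely different route from the paper's. The paper packages everything geometrically: it considers the map $H=(h_1,h_2):\C\to\Pro^1\times\Pro^1$ and the diagonal $\Delta\subset\Pro^1\times\Pro^1$, observes that each $(\alpha_j,\alpha_j)$ as well as $(\infty,\infty)$ lies on $\Delta$, and bounds
\[
\sum_{j} N_{GCD}(h_1-\alpha_j,h_2-\alpha_j,r)+N_{GCD}(1/h_1,1/h_2,r)\le N_{\Pro^1\times\Pro^1}(H,\Delta,r)
\]
in one step. The linear equivalence $\Delta\sim\pi_1^*\infty+\pi_2^*\infty$ together with the First Main Theorem then gives $N_{\Pro^1\times\Pro^1}(H,\Delta,r)\le T(h_1,r)+T(h_2,r)+O(1)$. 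Your approach instead stays entirely on $\Pro^1$: you bound the finite-value GCD sum by $N(h_1-h_2,0,r)$, and then recover the subtracted term $N_{GCD}(1/h_1,1/h_2,r)$ through explicit pole and proximity bookkeeping for $h_1-h_2$. Your argument is more elementary (no Nevanlinna theory on a surface, only classical one-variable estimates), while the paper's diagonal trick is cleaner because it absorbs the pole contribution automatically rather than extracting it by hand.

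One small correction to your parenthetical: the degenerate case $h_1=h_2$ cannot be ``handled by hand''---the inequality is in fact false there for $q\ge 2$ (e.g.\ take $h_1=h_2=e^z$ and three nonzero targets). Both the paper's proof and yours tacitly require $h_1\ne h_2$ (for the paper, so that the image of $H$ is not contained in $\Delta$; for you, so that $h_1-h_2\not\equiv 0$), and this is harmless since the lemma is only invoked for $f_1\ne f_2$.
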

\begin{proof} For $z\in \C$ and each $j$ we have
$$
\min\{\ord^+_z(h_1-\alpha_j),\ord^+_z(h_2-\alpha_j)\}\le \ord^+_z(h_1-h_2)
$$
and
$$
\min\{\ord^+_z(1/h_1),\ord^+_z(1/h_2)\} \le \ord^+\left(\frac{1}{h_1}-\frac{1}{h_2}\right).
$$
Let $H=(h_1,h_2):\C\to \Pro^1\times \Pro^1$ and let $\Delta\subseteq \Pro^1\times \Pro^1$ be the diagonal. From the previous order estimates and the definition of the various counting functions involved, it follows that
$$
N_{GCD}(1/h_1,1/h_2,r)+\sum_{j=1}^q N_{GCD}(h_1-\alpha_j,h_2-\alpha_j,r) \le N_{\Pro^1\times\Pro^1}(H,\Delta,r).
$$
Let $\pi_1,\pi_2:\Pro^1\times \Pro^1\to \Pro^1$ be the projections onto the two factors respectively. On $\Pro^1\times\Pro^1$ we have the linear equivalence $\Delta\sim \pi_1^*\infty+\pi_2^*\infty$, and we get
$$
\begin{aligned}
N_{\Pro^1\times\Pro^1}(H,\Delta,r)& \le T_{\Pro^1\times\Pro^1}(H,\Delta,r)+O(1) &\mbox{effectivity}\\
&=T_{\Pro^1\times\Pro^1}(H,\pi_1^*\infty + \pi_2^*\infty,r)+O(1) &\mbox{First Main Theorem}\\
&=T_{\Pro^1\times\Pro^1}(H,\pi_1^*\infty,r)+T_{\Pro^1\times\Pro^1}(H,\pi_2^*\infty,r)+O(1) &\mbox{additivity}\\
&=T(h_1,r)+T(h_2,r) + O(1) &\mbox{functoriality}.
\end{aligned}
$$
\end{proof}


\section{Arithmetic progressions of holomorphic maps}

\subsection{Bound for arithmetic progressions}

In this section we prove

\begin{theorem}\label{ThmMainHolo} Let $E$ be an elliptic curve over $\C$ and let $g:E\to \Pro^1$ be a non-constant morphism of degree $d$. Let $M\ge 2$ be an integer and for $j=1,2,...,M$ let $\phi_j:\C\to E$ be non-constant holomorphic maps. Define the meromorphic functions  $f_j=g\circ \phi_j\in \Mcal$. Suppose that there are $F_1,F_2\in \Mcal$ with $F_2$ not the zero function, and pairwise distinct complex numbers $a_1,...,a_M\in\C$ such that $f_j=F_1+a_jF_2$ for each $j$. Then $M\le 10d^2-4d$.
\end{theorem}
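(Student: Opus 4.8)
The plan is to exploit the relation $f_j = F_1 + a_j F_2$ together with the Nevanlinna-theoretic machinery developed in the previous section, in particular Lemmas \ref{LemmaSMTfromE} and \ref{LemmaGCD}. First I would extract a common parametrizing object: since $f_j = g\circ\phi_j$ and each $\phi_j$ is non-constant, Lemma \ref{LemmaSMTfromE} tells us that for any $\alpha\in\C_\infty$ we have $N(f_j,\alpha,r) =_{exc} (1+o(1))T(f_j,r)$ and $N^{(1)}(f_j,\alpha,r) =_{exc} (\#g^{-1}(\alpha)/d + o(1))T(f_j,r)$. The key point is that these precise asymptotics hold for \emph{every} value $\alpha$, which is much stronger than what a generic meromorphic function satisfies, and this is exactly the leverage we need.

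**The main computation.**

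Now I would look at differences $f_j - f_k = (a_j - a_k)F_2$ for $j\ne k$. Using two indices, say $f_1 - f_2 = (a_1-a_2)F_2$ and $f_1 - f_3 = (a_1-a_3)F_2$, we can solve for $F_1$ and $F_2$ in terms of, say, $f_1, f_2, f_3$, and then every $f_j$ becomes a fixed affine combination of $f_1$ and $f_2$. So without loss of generality all the information is encoded in the pair $(f_1, f_2)$, and the constraint is that the $M$ functions $f_1 + a_j F_2$ (equivalently, affine combinations $\lambda_j f_1 + \mu_j f_2$ with $\lambda_j + \mu_j = 1$) all arise as $g\circ(\text{holomorphic map into }E)$. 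I expect the heart of the argument to be a counting/GCD estimate: the points $z$ where $f_1$ and $f_2$ both take a common finite value $\alpha$ contribute to $N_{GCD}(f_1 - \alpha, f_2 - \alpha, r)$, and at such points several of the $f_j$'s simultaneously hit special values. Applying Lemma \ref{LemmaGCD} with a cleverly chosen set of values $\alpha_1,\dots,\alpha_q$ (for instance the finitely many $\alpha$ for which $g^{-1}(\alpha)$ has full size $d$, or the branch values of $g$) gives
$$
\sum_{j=1}^q N_{GCD}(f_1 - \alpha_j, f_2 - \alpha_j, r) \le T(f_1,r) + T(f_2,r) - N_{GCD}(1/f_1, 1/f_2, r) + O(1).
$$
On the other hand, I would bound each $N_{GCD}$ term from below using the structure: since $f_1 - \alpha$ and $f_2 - \alpha$ differ by a multiple of $F_2$, a common zero forces $F_2$ to vanish or forces many $f_j - \alpha$ to vanish simultaneously, and then Lemma \ref{LemmaSMTfromE} converts the truncated counting of these simultaneous zeros into a definite positive proportion of $T(f_j,r)$. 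Matching the proportion $\#g^{-1}(\alpha)/d$ (which is bounded below away from zero for $\alpha$ outside the branch locus, and summing over enough such $\alpha$ via the Second Main Theorem / Riemann–Hurwitz: $g$ has at most $2d-2$ branch points counted appropriately, so there are plenty of unramified fibers) should force $M$ to be bounded, and a careful bookkeeping of the constants — degree $d$ entering quadratically from the two factors of $\Pro^1\times\Pro^1$ in Lemma \ref{LemmaGCD} and linearly from ramification — should yield exactly $M \le 10d^2 - 4d$.

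**The main obstacle.**

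The delicate step will be organizing the combinatorics so that the GCD lower bound is genuinely proportional to $M$ rather than to a fixed constant. Specifically, I need to argue that for a common value $\alpha$ shared by $f_1$ and $f_2$ at a point $z$, a number of the $f_j$ growing linearly in $M$ must also satisfy a constraint at $z$ (land in a fiber $g^{-1}(\text{something fixed})$), so that the total counting mass is $\gtrsim M\cdot T(f_1,r)$ while the right-hand side of Lemma \ref{LemmaGCD} stays $\le (2+o(1))T(f_1,r)$ (using $T(f_1,r)\asymp T(f_2,r)$, which follows since they are non-constant affine combinations of each other up to the non-trivial progression hypothesis). Turning the pointwise incidence statement into the inequality $M\le 10d^2 - 4d$ will require handling the ramified fibers of $g$ separately and making sure the error terms $o(1)$ and the exceptional sets $\le_{exc}$ don't accumulate — since we take only finitely many ($q\le$ something like $10d^2$) values $\alpha_j$, the finitely many exceptional sets still have finite measure, so this is routine but must be stated carefully. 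I would also need the elementary fact, used implicitly, that the $f_j$ being pairwise distinct (guaranteed by distinctness of the $a_j$ plus $F_2\not\equiv 0$) means no two of them share all their values, which is what lets the incidence count be non-degenerate.
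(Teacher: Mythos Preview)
Your proposal identifies several correct ingredients (Lemma \ref{LemmaSMTfromE}, Lemma \ref{LemmaGCD}, the role of the branch values of $g$, and the affine relation among the $f_j$), but it has a genuine gap at the core step: the mechanism you describe for producing a factor of $M$ does not work, and the paper uses a different idea there.

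You propose to bound each $N_{GCD}(f_1-\alpha,f_2-\alpha,r)$ \emph{from below} by something growing like $M\cdot T(f_1,r)$, arguing that a common zero of $f_1-\alpha$ and $f_2-\alpha$ forces many $f_j-\alpha$ to vanish. But $N_{GCD}(f_1-\alpha,f_2-\alpha,r)$ is by definition bounded above by $N(f_1,\alpha,r)\le T(f_1,r)+O(1)$, so no lower bound proportional to $M$ is possible. The factor of $M$ in the paper's argument comes from an entirely different source: one forms the ratio $(f_2-\alpha)/(f_1-\alpha)$ and observes (via the affine relation) that $(f_2-\alpha)/(f_1-\alpha)-\gamma_j$ is a nonzero multiple of $(f_j-\alpha)/(f_1-\alpha)$ for suitable constants $\gamma_j$. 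One then applies the Second Main Theorem for $\Pro^1$ to this single function with the $M-1$ targets $\gamma_2,\dots,\gamma_M$, which yields a left-hand side $(M-3+o(1))T((f_2-\alpha)/(f_1-\alpha),r)$. This is where $M$ enters linearly. The GCD term $N_{GCD}(f_1-\alpha,f_2-\alpha,r)$ appears not as a lower bound but as a \emph{loss} when estimating $T((f_2-\alpha)/(f_1-\alpha),r)$ from below by $N(f_1,\alpha,r)-N_{GCD}(f_1-\alpha,f_2-\alpha,r)$; Lemma \ref{LemmaGCD} then controls the sum of these losses over the branch values.

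There is a second gap: your claim that $T(f_1,r)\asymp T(f_2,r)$ ``since they are non-constant affine combinations of each other'' is not justified (the coefficients involve $F_1,F_2$, whose heights you do not control). The paper handles this by a separate \emph{pole computation} (Lemma \ref{LemmaPoles}): because at any pole of some $f_j$ all but at most one of the $f_i$ share that pole with the same order, a pigeonhole argument produces indices $i_1,i_2$ and an infinite-measure set of radii on which $N_{GCD}(1/f_{i_1},1/f_{i_2},r)\ge (1-2/M-\epsilon)\max_j T(f_j,r)$. This both selects the ``right'' pair $(f_1,f_2)$ and supplies the large $N_{GCD}(1/f_1,1/f_2,r)$ term needed on the right side of Lemma \ref{LemmaGCD}. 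Without this step the inequality you would obtain from Lemma \ref{LemmaGCD} is too weak to close.
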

The result will be applied in Section \ref{SecArith} in a case where the functions $f_1,...,f_j$ are distinct (not necessarily consecutive) terms of an arithmetic progression in $\Mcal$. 


\subsection{Pole computation}
\begin{lemma}\label{LemmaPoles} Let us keep the notation and assumptions of Theorem \ref{ThmMainHolo}. Let $\epsilon>0$. There are indices $i_1\ne i_2$ in $\{1,2,...,M\}$ and a Borel set $U\subseteq \R_{\ge 0}$ of infinite Lebesgue measure such that for all $r\in U$ we have $T(f_{i_2},r)\le T(f_{i_1},r)$ and
$$
 \left(1-\frac{2}{M}-\epsilon \right)\max_{1\le j\le M} T(f_j,r) \le  N_{GCD}(1/f_{i_1},1/f_{i_2}, r) \le T(f_{i_1},r) +O(1).
$$
\end{lemma}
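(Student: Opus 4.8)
The plan is to follow the poles of the $f_j$ completely explicitly. The point to establish is that, at all but finitely many places of $\C$ and with at most one ``exceptional'' index per place, every $f_j$ has the \emph{same} pole order; consequently the GCD counting function of two generic functions $f_i,f_k$ recovers essentially the whole pole‑counting function, and a pigeonhole argument over the $M$ indices then produces the pair $(i_1,i_2)$. Throughout, write $\ord^-_z(h):=\ord^+_z(1/h)$ for the pole order of a meromorphic function $h$ at $z$.

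First I would carry out the local pole analysis. Fix $z_0\in\C$ and let $p,q$ be the pole orders of $F_1,F_2$ at $z_0$. Comparing leading Laurent coefficients in $f_j=F_1+a_jF_2$ one checks: if $p\ne q$, then every $f_j$ has pole order $\max\{p,q\}$ at $z_0$, the only possible exception — occurring solely when $q>p$ and $0$ is among the $a_j$ — being the single index with $a_j=0$, whose pole order drops to $p$; if $p=q>0$, then every $f_j$ has pole order $p$ except possibly the single index $j$ with $a_j$ equal to minus the ratio of the leading Laurent coefficients of $F_1$ and $F_2$; and if $p=q=0$, no $f_j$ has a pole at $z_0$. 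Thus each $z_0$ carries a ``generic pole order'' $\mu(z_0):=\max\{p,q\}$, attained by all the $f_j$ save at most one exceptional index $j(z_0)$ (for which the order is strictly smaller), and $\mu(z_0)=0$ off the discrete pole set of $F_1F_2$. Setting
$$
A(r)=\sum_{|z_0|\le r}\mu(z_0),\qquad B_j(r)=\sum_{\substack{|z_0|\le r\\ j(z_0)=j}}\bigl(\mu(z_0)-\ord^-_{z_0}(f_j)\bigr)\ \ (\ge 0),
$$
these are nondecreasing integer step functions and the analysis yields, for all $r$ and all $i\ne k$, the exact identities
$$
n(f_j,\infty,r)=A(r)-B_j(r),\qquad n_{GCD}(1/f_i,1/f_k,r)=A(r)-B_i(r)-B_k(r)
$$
(the second because at any place at most one of $i,k$ is exceptional), and moreover $\sum_j B_j(r)\le A(r)$, since each place contributes $\mu(z_0)$ to $A(r)$ and at most $\mu(z_0)$ to a single $B_j(r)$.

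Next I would pass to the height and run the pigeonhole. Applying the linear $N$-transform $n\mapsto\widehat n$ turns the identities into $N(f_j,\infty,r)=\widehat A(r)-\widehat B_j(r)$, $N_{GCD}(1/f_i,1/f_k,r)=\widehat A(r)-\widehat B_i(r)-\widehat B_k(r)$, and $\sum_j\widehat B_j(r)\le\widehat A(r)+O(1)$ by Lemma \ref{LemmaComparenN}, with $\widehat B_j(r)\ge 0$ for $r\ge 1$. By \eqref{EqnE1} of Lemma \ref{LemmaSMTfromE}, $N(f_j,\infty,r)=_{exc}(1+o(1))T(f_j,r)$, so outside a set $\mathcal E$ of finite measure (which we take to be open) one has $\widehat A(r)-\widehat B_j(r)=(1+o(1))T(f_j,r)$ for every $j$, hence $\widehat A(r)\ge(1-o(1))\max_{1\le j\le M}T(f_j,r)$ there. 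For $r\ge 1$, $r\notin\mathcal E$, the nonnegative numbers $\widehat B_1(r),\dots,\widehat B_M(r)$ have sum $\le\widehat A(r)+O(1)$, so their two smallest values, say at indices $i_1(r)\ne i_2(r)$ chosen by a fixed rule, satisfy $\widehat B_{i_1(r)}(r)+\widehat B_{i_2(r)}(r)\le\frac2M(\widehat A(r)+O(1))$, whence
$$
N_{GCD}(1/f_{i_1(r)},1/f_{i_2(r)},r)=\widehat A(r)-\widehat B_{i_1(r)}(r)-\widehat B_{i_2(r)}(r)\ \ge\ \Bigl(1-\tfrac2M-o(1)\Bigr)\max_jT(f_j,r)-O(1),
$$
which, since $\max_jT(f_j,r)\to\infty$, exceeds $(1-\tfrac2M-\epsilon)\max_jT(f_j,r)$ for all $r$ past some $R_0$ and outside $\mathcal E$. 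Since the set-valued map $r\mapsto\{i_1(r),i_2(r)\}$ takes only $\binom M2$ values, for one fixed unordered pair its preimage meets $[R_0,\infty)\setminus\mathcal E$ in a set of infinite measure; intersecting that set with $\{r:T(f_{i_1},r)\ge T(f_{i_2},r)\}$ or with its complement — one of which still has infinite measure — and, in the second case, swapping the labels of $i_1,i_2$ (legitimate as $N_{GCD}$ is symmetric in its arguments), we obtain a Borel set $U$ of infinite measure and a fixed ordered pair $(i_1,i_2)$ with $T(f_{i_2},r)\le T(f_{i_1},r)$ and the asserted lower bound for all $r\in U$. The upper bound $N_{GCD}(1/f_{i_1},1/f_{i_2},r)\le T(f_{i_1},r)+O(1)$ is simply the trivial GCD bound applied to $1/f_{i_1}$, together with $T(1/f_{i_1},r)=T(f_{i_1},r)+O(1)$ from the First Main Theorem.

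The step I expect to be the main obstacle is the local pole bookkeeping: correctly treating the cases $p<q$, $p>q$, $p=q>0$, pinning down the generic order $\mu(z_0)$ and the unique possible exceptional index, and hence establishing the two exact identities for $n(f_j,\infty,r)$ and $n_{GCD}(1/f_i,1/f_k,r)$. Once those are in hand, the rest — the $N$-transform, the input from Lemma \ref{LemmaSMTfromE}, and the double pigeonhole over the $\binom M2$ pairs and over which of $T(f_{i_1},r),T(f_{i_2},r)$ is larger — is routine.
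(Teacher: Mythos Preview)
Your proof is correct and follows essentially the same approach as the paper: both rest on the local observation that at each pole of $F_1F_2$ all but at most one $f_j$ share the common pole order $\max\{\ord_z^- F_1,\ord_z^- F_2\}$, then run a pigeonhole over the $\binom{M}{2}$ index pairs and pass to heights via Lemma~\ref{LemmaSMTfromE}. The only cosmetic difference is that you package the pole data into auxiliary functions $A(r),B_j(r)$ and select the pair by taking the two smallest $\widehat B_j$, whereas the paper sums $n_{GCD}$ over all pairs, bounds the total below by $\binom{M-1}{2}\max_j n(f_j,\infty,r)$, and averages --- both routes produce the same factor $1-2/M$.
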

\begin{proof} Given $z_0\in \C$, we note that if some of the $f_j=F_1+a_jF_2$ has a pole at $z_0$, then $F_1$ or $F_2$ has a pole at $z_0$. As the complex numbers $a_j$ are different, for all $1\le j\le M$ with at most one exception we get that 
$$
\ord_{z_0}(f_j)=\min\{\ord_{z_0}F_1,\ord_{z_0}F_2\} = \min_{1\le i\le M} \ord_{z_0} f_i<0.
$$
Therefore,
$$
\begin{aligned} 
\sum_{i<j} n_{GCD}(1/f_i,1/f_j,r) &=  \sum_{|z|\le r} \sum_{i<j} \min \{\ord^+_z(1/f_i),\ord^+_z(1/f_j)\}\\
&\ge \sum_{|z|\le r} \binom{M-1}{2}\max_{1\le i\le M} \ord^+_z(1/f_i). 
\end{aligned}
$$
It follows that for each $1\le i_0\le M$ we have
$$
\sum_{i<j} n_{GCD}(1/f_i,1/f_j,r) \ge  \binom{M-1}{2} n(f_{i_0},\infty, r).
$$
By Lemma \ref{LemmaComparenN} and Lemma \ref{LemmaSMTfromE}, for any given $\epsilon>0$ we get
$$
\sum_{i<j} N_{GCD}(1/f_i,1/f_j,r) \ge  \binom{M-1}{2} N(f_{i_0},\infty, r) \ge_{exc} \binom{M-1}{2}(1-\epsilon)T(f_{i_0},r).
$$
Since $i_0$ is arbitrary, we get
$$
\sum_{i<j} N_{GCD}(1/f_i,1/f_j,r) \ge_{exc} \binom{M-1}{2}(1-\epsilon) \max_{1\le i\le M} T(f_i,r).
$$
The first sum has $\binom{M}{2}$ terms. A contradiction argument shows that there are indices $i_1\ne i_2$ in $\{1,2,...,M\}$ and a Borel set $V \subseteq \R_{\ge 0}$ of infinite Lebesgue measure such that for all $r\in V$ we have
$$
N_{GCD}(1/f_{i_1},1/f_{i_2}, r)  \ge  \frac{\binom{M-1}{2}}{\binom{M}{2}}(1-\epsilon)\max_{1\le i\le M} T(f_i,r)=  \left(1-\frac{2}{M}\right)(1-\epsilon)\max_{1\le i\le M} T(f_i,r).
$$
After switching $i_1$, $i_2$ if necessary and replacing $V$ by an infinite measure subset $U$, for all $r\in U$ we also have $T(f_{i_2},r)\le T(f_{i_1},r)$. Finally, the trivial GCD bound gives
$$
N_{GCD}(1/f_{i_1},1/f_{i_2}, r) \le N(1/f_{i_1},0,r)=N(f_{i_1},\infty,r)\le T(f_{i_1},r)+O(1).
$$
We get the result adjusting $\epsilon$.
\end{proof}

\subsection{Two numerical lemmas} 

\begin{lemma}\label{LemmaMaxMin} For $x\in \R$, let us write $x^+=\max\{0,x\}$. For every $A,B\in \R$ we have
$$
(A-B)^+\ge A^+ - \min\{A^+,B^+\}.
$$
\end{lemma}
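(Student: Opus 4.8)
The statement to prove is the elementary real-number inequality $(A-B)^+\ge A^+-\min\{A^+,B^+\}$ for all $A,B\in\R$, where $x^+=\max\{0,x\}$. The plan is to dispose of it by a short case analysis on the signs of $A$ and $B$, since the functions involved are piecewise linear.

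First I would observe that if $A\le 0$ then $A^+=0$, and since $\min\{A^+,B^+\}\ge 0$ the right-hand side is $\le 0$, while $(A-B)^+\ge 0$ always; so the inequality holds trivially. Hence I may assume $A>0$, so $A^+=A$. Now split on the sign of $B$. If $B\le 0$, then $B^+=0$, so $\min\{A^+,B^+\}=0$ and the right-hand side is $A$; on the other hand $A-B\ge A>0$, so $(A-B)^+=A-B\ge A$, giving the claim. If instead $B>0$, then $B^+=B$ and $\min\{A^+,B^+\}=\min\{A,B\}$, so the right-hand side equals $A-\min\{A,B\}=(A-B)^+$ (splitting once more according to whether $A\ge B$ or $A<B$: in the first case $A-\min\{A,B\}=A-B=(A-B)^+$ since $A-B\ge 0$; in the second case $A-\min\{A,B\}=A-A=0=(A-B)^+$ since $A-B<0$). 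In all cases the inequality holds, in fact with equality whenever $A>0$.

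There is no real obstacle here; the only thing to be slightly careful about is bookkeeping of the finitely many sign cases so that none is omitted. Alternatively, one could give a one-line argument: for any $A,B$, write $A^+ = (A-B)^+ + \big(A^+-(A-B)^+\big)$ and check that $A^+-(A-B)^+\le B^+$ (equivalently $A^+ - B^+ \le (A-B)^+$, which is the subadditivity-type bound $(x+y)^+ \le x^+ + y^+$ applied to $x=A-B$, $y=B$), hence $(A-B)^+ = A^+ - \big(A^+-(A-B)^+\big) \ge A^+ - B^+$ and also $(A-B)^+\ge 0 = A^+-A^+ \ge A^+-A^+$; combining, $(A-B)^+\ge A^+-\min\{A^+,B^+\}$. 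I would present whichever of these is shorter to write cleanly; the case analysis is the most transparent.
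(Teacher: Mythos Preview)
Your proof is correct and follows essentially the same approach as the paper's: a finite case analysis exploiting the piecewise-linear nature of $x\mapsto x^+$. The paper merely lists the cases $A\le B$; $0\ge A>B$; $A>0\ge B$; $A>B>0$ and leaves the verification to the reader, whereas you organize the split first by the sign of $A$, then of $B$, then by the order of $A$ and $B$---a different but equivalent partition of $\R^2$, with the details actually written out.
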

\begin{proof} This is readily checked by considering the following cases: $A\le B$; $0\ge A>B$; $A>0\ge B$; $A>B>0$. The details are left to the reader. 
\end{proof}
\begin{lemma}\label{LemmaRH} Let $E$ be a complex elliptic curve and let $g:E\to \Pro^1$ be a non-constant morphism of degree $d$. Let $\alpha_1,...,\alpha_k\in \C$ be the set of affine branch points (after identifying $\Pro^1=\C_\infty$).  We have 
$$
1\le k\le 2d\qquad
\mbox{ and }
\qquad\sum_{j=1}^k \frac{\# g^{-1}(\alpha_j)}{d} \le k-1-\frac{1}{d}.
$$
\end{lemma}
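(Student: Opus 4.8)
The plan is to apply the Riemann--Hurwitz formula to $g\colon E\to\Pro^1$ and then to separate the ramification lying over $\infty$ from the ramification lying over the affine branch points $\alpha_1,\dots,\alpha_k$; the asymmetry between these two contributions is exactly what produces the extra $-\tfrac1d$ in the stated bound.

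First I would attach to each point $\beta\in\Pro^1(\C)$ its total ramification contribution $\rho(\beta):=\sum_{P\in g^{-1}(\beta)}(e_P-1)$, where $e_P$ denotes the ramification index of $g$ at $P$. Since $g$ is finite of degree $d$, every fibre has $d$ points counted with multiplicity, so $\rho(\beta)=d-\#g^{-1}(\beta)$; hence $\rho(\beta)\ge 0$, with equality exactly when $\beta$ is not a branch point, and moreover $\rho(\beta)\le d-1$ because $g$ is surjective and so $g^{-1}(\beta)\ne\varnothing$. Riemann--Hurwitz for $g$, using that $E$ has genus $1$ and $\Pro^1$ has genus $0$, gives $0=-2d+\sum_{P\in E}(e_P-1)$, that is $\sum_{\beta\in\Pro^1}\rho(\beta)=2d$.

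Granting this, the bound $k\le 2d$ follows at once: the branch locus $B=\{\beta:\rho(\beta)>0\}$ has $\sum_{\beta\in B}\rho(\beta)=2d$ with every summand $\ge 1$, so $|B|\le 2d$, and the $\alpha_j$ lie in $B$. For $k\ge1$ I would observe that $\sum_{j=1}^k\rho(\alpha_j)=2d-\rho(\infty)\ge 2d-(d-1)>0$, so at least one $\alpha_j$ is really a branch point. For the main inequality I would simply rewrite
$$
\sum_{j=1}^k\#g^{-1}(\alpha_j)=\sum_{j=1}^k\bigl(d-\rho(\alpha_j)\bigr)=kd-\bigl(2d-\rho(\infty)\bigr)=(k-2)d+\rho(\infty),
$$
divide by $d$ to get $\sum_{j=1}^k\tfrac{\#g^{-1}(\alpha_j)}{d}=k-2+\tfrac{\rho(\infty)}{d}$, and finish using $\rho(\infty)\le d-1$.

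There is no real obstacle here; the whole argument is Riemann--Hurwitz plus the triviality that fibres are non-empty. The one point requiring care is to resist symmetrising $\infty$ with the $\alpha_j$: it is precisely the inequality $\rho(\infty)\le d-1$ (equivalently, that $g^{-1}(\infty)$ contains at least one point) that upgrades the naive bound $k-1$ to $k-1-\tfrac1d$. One should also be sure to invoke the fibre identity $\#g^{-1}(\beta)+\rho(\beta)=d$ in the form valid for all $\beta$, including non-reduced fibres, which is standard for finite morphisms of smooth projective curves in characteristic $0$.
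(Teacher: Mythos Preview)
Your proof is correct and follows essentially the same route as the paper: both apply Riemann--Hurwitz in the form $\sum_{\beta}\bigl(d-\#g^{-1}(\beta)\bigr)=2d$, isolate the contribution of $\infty$, and bound it by $d-1$ using that $g^{-1}(\infty)\ne\varnothing$. The only cosmetic difference is that you name $\rho(\beta)=d-\#g^{-1}(\beta)$ and record the exact identity $\sum_j \#g^{-1}(\alpha_j)/d = k-2+\rho(\infty)/d$ before bounding, whereas the paper writes the inequality directly.
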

\begin{proof} The total number of branch points of a ramified morphism to $\Pro^1$ is always at least $2$, so $k\ge 1$. The Riemann-Hurwitz formula gives $(2\cdot 1-2)=d\cdot (2\cdot 0-2) + \sum_{\alpha\in \Pro^1} \left(d-\#g^{-1}(\alpha)\right)$, thus
$$
2d=\sum_{\alpha\in \Pro^1} \left(d-\#g^{-1}(\alpha)\right)\ge \sum_{\substack{\alpha\in \Pro^1\\ \alpha\, \mathrm{branch}}} 1\ge k.
$$
This proves the bounds for $k$. Finally, there is at most one branch points other than the $\alpha_j$, so the Riemann-Hurwitz formula gives
$$
2d = \sum_{\alpha\in \Pro^1} \left(d-\#g^{-1}(\alpha)\right)\le (d-1)+ \sum_{j=1}^k (d- \#g^{-1}(\alpha_j)) = (k+1)d-1 -  \sum_{j=1}^k \#g^{-1}(\alpha_j)
$$
and the result follows.
\end{proof}

\subsection{Proof of Theorem \ref{ThmMainHolo}}   Let us keep the notation and assumptions of Theorem \ref{ThmMainHolo}. Furthermore, we may assume $M\ge 4$, so that the expressions $M-1$, $M-2$, and $M-3$ are positive (this is relevant as we will eventually divide by them in some computations). 

Let $\epsilon>0$. Up to relabeling the functions $f_j$ if necessary, Lemma \ref{LemmaPoles} shows that there is a Borel set $U\subseteq \R_{\ge 0}$ of infinite Lebesgue measure such that for all $r\in U$ we have
\begin{equation}\label{Eqnf1big}
T(f_2,r)\le T(f_1,r)
\end{equation}
and
\begin{equation}\label{EqnPoles}
\left(1-\frac{2}{M}-\epsilon\right)\max_{1\le j\le M} T(f_j,r) \le  N_{GCD}(1/f_{1},1/f_{2}, r) \le T(f_{1},r) +O(1).
\end{equation}

For each $1\le j\le M$ define the complex numbers
$$
\lambda_j=\frac{a_2-a_j}{a_2-a_1}, \quad \mu_j=\frac{a_1-a_j}{a_1-a_2}\quad \gamma_j=\frac{a_j-a_2}{a_j-a_1}
$$
and observe that 
\begin{itemize}
\item All the numbers $\lambda_j,\mu_j,\gamma_j$ are non-zero.
\item The numbers $\lambda_j$ are pairwise different. Similarly for the numbers $\mu_j$ and the numbers $\gamma_j$.
\item $\lambda_j+\mu_j=1$.
\item  $\lambda_ja_1+\mu_ja_2=a_j$. 
\item $\gamma_j=-\lambda_j/\mu_j$.
\end{itemize}

Let $\alpha\in \C$. We note that
$$
\lambda_j(f_1-\alpha)+\mu_j(f_2-\alpha)= (\lambda_j+\mu_j)F_1+(\lambda_ja_1+\mu_ja_2)F_2+(\lambda_j+\mu_j)\alpha=f_j-\alpha.
$$
hence
\begin{equation}\label{Eqnlc}
\frac{f_2-\alpha}{f_1-\alpha} -\gamma_j = \mu_j^{-1}\cdot \frac{f_j-\alpha}{f_1-\alpha}.
\end{equation}
From this equation we observe that the meromorphic function $(f_2-\alpha)/(f_1-\alpha)\in \Mcal$ is not the constant function $\gamma_j$ for any $j$, since $f_j$ is not the constant function $\alpha$ ($f_j$ is non-constant).

Also from \eqref{Eqnlc} we see that given any complex number $\alpha\in \C$, for all $r\ge 0$ we have
\begin{equation}\label{EqnZeros1}
N^{(1)}\left(\frac{f_2-\alpha}{f_1-\alpha} , \gamma_j ,r\right) = N^{(1)}\left(\frac{f_j-\alpha}{f_1-\alpha} , 0 ,r \right).
\end{equation}

Given $\alpha\in \C$, let us give an upper bound for the average (for $2\le j\le M$) of the right hand side of the previous expression. Let $B[r]=\{z\in \C : |z|\le r\}$. First we observe
$$
\begin{aligned}
\sum_{j=2}^M \, & n^{(1)}\left(\frac{f_j-\alpha}{f_1-\alpha} , 0 ,r \right) \\
&\le \sum_{j=2}^M n^{(1)}\left(f_j-\alpha , 0 ,r \right)  + \sum_{j=2}^M  \#\left\{z\in B[r]: \ord_{z}(f_1-\alpha)<\ord_{z}(f_j-\alpha)\le 0\right\}.
\end{aligned}
$$
Since $f_j= F_1 + a_jF_2$, we see that if $f_1$ has a pole at some $z_0$, then all the $f_j$ have a pole of the same order at $z_0$ with at most one possible exception for $j$. Thus, given $z_0\in \C$, the condition $\ord_{z_0}(f_1-\alpha)<\ord_{z_0}(f_j-\alpha)\le 0$ holds for at most one $j$, in which case $f_1$ has a pole. We get
$$
\sum_{j=2}^M  \#\left\{z\in B[r]: \ord_{z}(f_1-\alpha)<\ord_{z}(f_j-\alpha)\le 0\right\}\le n^{(1)}(f_1,\infty,r),
$$
from which we deduce
$$
\sum_{j=2}^M  n^{(1)}\left(\frac{f_j-\alpha}{f_1-\alpha} , 0 ,r \right)\le n^{(1)}(f_1,\infty,r)+ \sum_{j=2}^M n^{(1)}\left(f_j-\alpha , 0 ,r \right).
$$
Therefore, Lemma \ref{LemmaComparenN} gives 
\begin{equation}\label{EqnZeros3}
\sum_{j=2}^M  N^{(1)}\left(\frac{f_j-\alpha}{f_1-\alpha} , 0 ,r \right)\le N^{(1)}(f_1,\infty,r)+ \sum_{j=2}^M N^{(1)}\left(f_j-\alpha , 0 ,r \right) +O(1).
\end{equation}
Let us write 
$$
T(r)=\max_{1\le j\le M} T(f_j ,r ).
$$
Using \eqref{EqnZeros3}, \eqref{EqnZeros1}, the fact that $N^{(1)}\left(f_j-\alpha , 0 ,r \right) =N^{(1)}\left(f_j,\alpha  ,r \right)$, and Lemma \ref{LemmaSMTfromE},  we deduce that for any given $\alpha\in \C$ 
\begin{equation}\label{EqnZeros2}
\begin{aligned}
\sum_{j=2}^M  N^{(1)}\left(\frac{f_2-\alpha}{f_1-\alpha} , \gamma_j ,r \right) &\le N^{(1)}(f_1,\infty,r)+ \sum_{j=2}^M N^{(1)}\left(f_j,\alpha  ,r \right) +O(1)\\
 &=_{exc}  N^{(1)}(f_1,\infty,r)+ \left(\frac{\# g^{-1}(\alpha)}{d} +o(1) \right) \sum_{j=2}^M T(f_j ,r ) \\
&\le T(f_1,r)+ \left(\frac{\# g^{-1}(\alpha)}{d} +o(1) \right) (M-1) T(r ). 
\end{aligned}
\end{equation}

 As explained after \eqref{Eqnlc}, the meromorphic function $(f_2-\alpha)/(f_1-\alpha)\in\Mcal$ is not equal to the constant function $\gamma_j$ for any $j$. The Second Main Theorem \eqref{EqnSMTP1} with the targets $\gamma_2,...,\gamma_M$ (here, $q=M-1$) gives that for any fixed $\alpha\in \C$
\begin{equation}\label{EqnSMTinProof}
(M-3+o(1))T\left(\frac{f_2-\alpha}{f_1-\alpha},r\right)\le_{exc} \sum_{j=2}^M  N^{(1)}\left(\frac{f_2-\alpha}{f_1-\alpha} , \gamma_j ,r \right).
\end{equation}
Let us give a lower bound for the expression on the left hand side of \eqref{EqnSMTinProof}. By Lemma \ref{LemmaMaxMin} we have
$$
\begin{aligned}
n\left(\frac{f_2-\alpha}{f_1-\alpha},\infty,r\right) & = \sum_{|z|\le r} \max\{0,\ord_z(f_1-\alpha)-\ord_z(f_2-\alpha)\}\\
&\ge  \sum_{|z|\le r} \ord^+_z(f_1-\alpha) - \sum_{|z|\le r}\min\{ \ord^+_z(f_1-\alpha),\ord^+_z(f_2-\alpha) \}\\
&= n(f_1,\alpha,r) - n_{GCD}(f_1-\alpha,f_2-\alpha,r).
\end{aligned}
$$
Lemma \ref{LemmaComparenN} gives the desired lower bound for the left hand side of \eqref{EqnSMTinProof}:
\begin{equation}\label{EqnDen}
  N(f_1,\alpha,r)- N_{GCD}(f_1-\alpha,f_2-\alpha,r)\le N\left(\frac{f_2-\alpha}{f_1-\alpha},\infty, r\right)+O(1)\le T\left(\frac{f_2-\alpha}{f_1-\alpha},r\right) +O(1).
\end{equation}
We conclude that for any given $\alpha\in \C$ the following holds:
$$
\begin{aligned}
(M-3+o(1)) & \left((1+o(1))T(f_1,r)- N_{GCD}  (f_1-\alpha,f_2-\alpha,r)\right) &  \\
&=_{exc}(M-3+o(1)) \left(N(f_1,\alpha,r)- N_{GCD}  (f_1-\alpha,f_2-\alpha,r)\right) & \mbox{ by Lemma \ref{LemmaSMTfromE}}\\
& \le_{exc} \sum_{j=2}^M  N^{(1)}\left(\frac{f_2-\alpha}{f_1-\alpha} , \gamma_j ,r \right)& \mbox{by \eqref{EqnDen} and \eqref{EqnSMTinProof}}\\
&\le_{exc} T(f_1,r)+ \left(\frac{\# g^{-1}(\alpha)}{d} +o(1) \right) (M-1)T( r )  & \mbox{ by \eqref{EqnZeros2}}.
\end{aligned}
$$
Rearranging and collecting the error terms, we conclude
\begin{equation}\label{EqnKeyBound}
(M-4)T(f_1,r) \le_{exc} \left(\frac{\# g^{-1}(\alpha)}{d} +o(1) \right)  (M-1)  T(r ) +(M-3)N_{GCD}  (f_1-\alpha,f_2-\alpha,r) .
\end{equation}
Let  $k$ be the number of affine branch points in $\C_\infty=\Pro^1$ of $g:\E\to\Pro^1$ and let $\alpha_1,...,\alpha_k\in \C$ be these branch points. Lemma \ref{LemmaGCD} gives
$$
\sum_{i=1}^k N_{GCD}  (f_1-\alpha_i,f_2-\alpha_i,r)\le T(f_1,r)+T(f_2,r) - N_{GCD}(1/f_1,1/f_2,r) + O(1).
$$
Using \eqref{Eqnf1big} and \eqref{EqnPoles} we get for all $r$ in the infinite measure set $ U$
$$
\sum_{i=1}^k N_{GCD}  (f_1-\alpha_i,f_2-\alpha_i,r)\le 2T(f_1,r) - \left(1-\frac{2}{M}-\epsilon\right) T(r) +O(1).
$$
Removing a finite measure subset from $U$ we get an infinite measure set $U'\subseteq U\subseteq \R_{\ge 0}$ such that for all $r\in U'$ the previous estimate holds as well as \eqref{EqnKeyBound} for $\alpha=\alpha_j$ with $1\le j\le k$. This gives that for all $r\in U'$ we have
$$
\begin{aligned}
k(M-4)T(f_1,r)&\le \left(\sum_{j=1}^k \frac{\# g^{-1}(\alpha_j)}{d} +o(1) \right)  (M-1)  T(r ) \\
&\quad +2(M-3)T(f_1,r) - (M-3)\left(1-\frac{2}{M}-\epsilon\right) T(r).
\end{aligned}
$$
Let us write 
$$
S= \sum_{j=1}^k \frac{\# g^{-1}(\alpha_j)}{d}.
$$ 
Rearranging we get
$$
\left(  \frac{M-4}{M-3}\cdot k - 2  \right)T(f_1,r) \le \left( \frac{M-1}{M-3}\cdot S +o(1) -1+\frac{2}{M}+\epsilon \right)   T(r ).
$$
Using \eqref{EqnPoles} (which is valid for $r\in U'$) we get
$$
\left(  \frac{M-4}{M-3}\cdot k - 2  \right)\left(1-\frac{2}{M}-\epsilon\right)T(r) \le \left( \frac{M-1}{M-3}\cdot S +o(1) -1+\frac{2}{M}+\epsilon \right)   T(r ).
$$
Since $U'\subseteq \R_{\ge 0}$ has infinite measure, we can let $r\to+\infty$ over a sequence in $U'$. As the functions $f_j$ are non-constant, we get $T(r)\to +\infty$ over this sequence, and we deduce
$$
\left(  \frac{M-4}{M-3}\cdot k - 2  \right)\left(1-\frac{2}{M}-\epsilon\right) \le \frac{M-1}{M-3}\cdot S -1+\frac{2}{M}+\epsilon .
$$
Since $\epsilon>0$ is arbitrary and $S\le k-1-1/d$ (cf. Lemma \ref{LemmaRH}) we obtain
$$
\left(  \frac{M-4}{M-3}\cdot k - 2  \right)\left(1-\frac{2}{M}\right) \le \frac{M-1}{M-3}\left(k-1-\frac{1}{d}\right)  -1+\frac{2}{M}.
$$

If $M$ were very large, this would approximately give $k-2\le k-1-1/d-1=k-2-1/d$ which is not possible. So, it is clear that this expression constrains the size of $M$. Let us work out the precise details in order to get the desired bound: Rearranging we obtain
\begin{equation}\label{EqnFinal}
\frac{(M-1)M}{d}  + 2(2M-3) - (5M-8) \cdot k \le 0.
\end{equation}
The quadratic function 
$$
u(t)=t(t-1)/d + 2(2t-3) - (5t-8)k
$$ 
is increasing for $t\ge t_0= (1+(5k-4)d)/2$ and satisfies $u((5k-4)d)=3k-2\ge 1$. Since $(5k-4)d\ge t_0$ we deduce that $u(t)\ge 1$ for $t\ge (5k-4)d$. Therefore, \eqref{EqnFinal} with Lemma \ref{LemmaRH} shows that 
$$
M\le (5k-4)d\le 10d^2-4d.
$$
This concludes the proof of Theorem \ref{ThmMainHolo}. \qed
\section{Some geometric constructions}\label{SecGeom}

\subsection{Notation and first constructions} Let $k$ be an algebraically closed field of characteristic $0$, let $E$ be an elliptic curve over $k$ and let $n$ be a positive integer.  Let $g\in k(E)$ be a non-constant rational function of degree $d$. We identify $\A^1_k$ with the affine chart $\{[x_0:x_1]\in \Pro^1_k : x_0\ne 0\}$ of $\Pro^1_k$. In particular, $g$ can be identified with a morphism $g:E\to\Pro^1_k$ of degree $d$ defined over $k$. We consider the abelian variety $A=E^n$ of dimension $n$. Let $G_n:A\to (\Pro^1)^n$ be the morphism obtained from $n$ copies of $g$. 
\begin{lemma}\label{LemmaFlat}
 The morphism $G_n$ is finite of degree $d^n$ and flat.
 \end{lemma}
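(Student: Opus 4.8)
The plan is to obtain $G_n$ from $g$ by iterated base change and composition, exploiting that finiteness, flatness, and multiplicativity of degree are all preserved by these operations. First I would record the case $n=1$: since $g\colon E\to\Pro^1_k$ is a non-constant morphism of smooth projective curves over $k$, it is proper and quasi-finite, hence finite; it is flat by ``miracle flatness'' — a morphism from a Cohen--Macaulay scheme to a regular scheme all of whose fibres have the expected dimension is flat — or, more concretely, because every fibre of $g$ is a zero-dimensional subscheme of $E$ of length $d$; and its degree is $d$ by definition. Equivalently, $g_*\Ocal_E$ is locally free of rank $d$ on $\Pro^1_k$.

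Next I would handle the general case by induction on $n$, factoring $G_n$ as the composition
$$
E^n=E\times_k E^{n-1}\ \xrightarrow{\ g\times\mathrm{id}\ }\ \Pro^1_k\times_k E^{n-1}\ \xrightarrow{\ \mathrm{id}\times G_{n-1}\ }\ \Pro^1_k\times_k(\Pro^1_k)^{n-1}=(\Pro^1_k)^n.
$$
The first arrow is the base change of $g$ along the structure morphism $E^{n-1}\to\Spec k$, hence finite and flat, with every fibre of length $d$; the second arrow is the base change of $G_{n-1}$ along $\Pro^1_k\to\Spec k$, hence finite and flat with every fibre of length $d^{n-1}$ by the inductive hypothesis. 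Composition of finite (resp.\ flat) morphisms is finite (resp.\ flat), so $G_n$ is finite and flat. Since $(\Pro^1_k)^n$ is connected and $G_n$ is finite flat, the rank of $(G_n)_*\Ocal_{E^n}$ — equivalently the length of the scheme-theoretic fibre — is constant; evaluating it over a point of $(\Pro^1_k)^n$ none of whose coordinates is a branch point of $g$, where each of the $n$ coordinate fibres of $g$ is reduced of cardinality $d$, gives $d^n$. This closes the induction.

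I do not expect a genuine obstacle here: once the right toolbox is in place the argument is entirely formal. The one point that deserves to be stated with care is the flatness of $g$ (equivalently of $G_n$ itself), for which miracle flatness is the cleanest route — it applies directly to $G_n$ since $E^n$ is smooth, hence Cohen--Macaulay, over $k$, while $(\Pro^1_k)^n$ is smooth, hence regular, of the same dimension $n$, and $G_n$ has finite fibres — and in fact bypasses the induction for flatness altogether. A fully equivalent alternative I could use instead of the induction is to observe that $G_n$ is literally the $n$-fold fibre product $g\times_k\cdots\times_k g$ over $\Spec k$ and to invoke stability of the property ``finite locally free of rank $d$'' under such fibre products, with $(G_n)_*\Ocal_{E^n}\cong (g_*\Ocal_E)^{\boxtimes n}$ of rank $d^n$.
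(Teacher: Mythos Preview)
Your proof is correct and follows essentially the same approach as the paper: establish that $g$ is finite, flat (via miracle flatness, which is Hartshorne III Prop.\ 9.7), of degree $d$, and then pass to $G_n$ by repeated base change (Hartshorne III Prop.\ 9.2). Your inductive factorization and the alternative direct application of miracle flatness to $G_n$ are more detailed than what the paper writes, but the underlying argument is the same.
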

\begin{proof}
The map $g:E\to \Pro^1_k$ is surjective and finite of degree $d$. Hence,  $G_n:E^n\to (\Pro^1_k)^n$ is surjective and  finite of degree $d^n$.  On the other hand, the map $g$ is flat by \cite{Hartshorne} III Prop.\ 9.7. Hence, repeated applications of \cite{Hartshorne} III Prop.\ 9.2 give that $G$ is flat. \end{proof}
\subsection{The surfaces $U_n$ and $H_n$} Let us assume that $n\ge 3$. Let $u_1,...,u_n$ be the coordinates on $\A^n_k$ and let us define the affine variety
\begin{equation}\label{EqnUn}
U_n:\left\{\begin{aligned}
u_3-2u_2+u_1&=0\cr
&\vdots\cr
u_n-2u_{n-1}+u_{n-2}&=0
\end{aligned}\right.\subseteq \mathbb{A}^n_k.
\end{equation}
Under the previously chosen inclusion $\A^1_k\subseteq \Pro^1_k$, we have $\A^n_k\subseteq (\Pro^1_k)^n$. Let $H_n$ be the Zariski closure of $U_n$ in $(\Pro^1_k)^n$. Let $p_j:(\Pro^1_k)^n\to \Pro^1_k$ be the $j$-th coordinate projection.
\begin{lemma}\label{LemmaSurfacesDown}
We have that $U_n$ is a linear surface in $\A_k^n$ and $H_n$ is an irreducible  projective surface. Furthermore, for every $j$ we have that $p_j$ restricts to a surjective map $H_n\to\Pro^1_k$.
\end{lemma}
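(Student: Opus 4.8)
The plan is to treat the three assertions in turn; the first is a short computation in linear algebra and the other two are formal consequences of standard facts about Zariski closures and proper morphisms, so I do not expect a real obstacle — the only thing to keep an eye on is that passing to the closure in $(\Pro^1_k)^n$ can neither create new irreducible components nor drop the dimension, and that the resulting projections do hit the points at infinity.

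First I would solve the linear recursion $u_{i+2}=2u_{i+1}-u_i$ cutting out $U_n$ in \eqref{EqnUn}: a point of $U_n$ is determined by its first two coordinates, with $u_j=(2-j)u_1+(j-1)u_2$ for every $j$. Hence projection onto the $(u_1,u_2)$-coordinates gives an isomorphism $U_n\xrightarrow{\sim}\A^2_k$; in particular $U_n$ is a $2$-dimensional linear subspace of $\A^n_k$ (equivalently, the $n-2$ defining equations are linearly independent), which is the meaning of ``linear surface'', and $U_n$ is irreducible.

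Next, since $H_n$ is by definition the Zariski closure of $U_n$ in the projective variety $(\Pro^1_k)^n$, it is projective; being the closure of an irreducible set it is irreducible, and closure preserves dimension, so $H_n$ is an irreducible projective surface. For the surjectivity of $p_j$, I would use the isomorphism $U_n\cong\A^2_k$ above to identify the restriction $p_j|_{U_n}$ with the linear map $\A^2_k\to\A^1_k\subseteq\Pro^1_k$, $(u_1,u_2)\mapsto(2-j)u_1+(j-1)u_2$; its coefficient vector $(2-j,j-1)$ is never zero, so the map is surjective onto $\A^1_k$ and $p_j(U_n)=\A^1_k$. Finally, $H_n$ is complete and $p_j$ is a morphism, so $p_j(H_n)$ is closed in $\Pro^1_k$; since it contains the dense subset $\A^1_k=p_j(U_n)$, it must be all of $\Pro^1_k$. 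The only genuinely delicate point is this last closedness step, which is exactly what forces the point at infinity into the image.
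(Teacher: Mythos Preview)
Your argument is correct and follows the same route as the paper, which simply notes that the $n-2$ defining equations are linearly independent and then says ``the other claims follow''. You have merely spelled out what ``follow'' means: the explicit parametrization $u_j=(2-j)u_1+(j-1)u_2$ and the properness step for surjectivity of $p_j$ are exactly the details the paper suppresses.
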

\begin{proof} $U_n$ is a linear surface because the $n-2$ linear equations defining it are linearly independent. The other claims follow.
\end{proof}

\begin{lemma} \label{LemmaIsomHn}  Let $1\le i<j\le n$.  The projection $p_{ij}: (\Pro^1_k)^n\to (\Pro^1_k)^2$ onto the coordinates $i$ and $j$ restricts to a map $H_n\to (\Pro^1_k)^2$ which is finite of degree $1$ above $\A^2_k$.
\end{lemma}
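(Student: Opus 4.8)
The plan is to understand $H_n$ above the open chart $\A^2_k\subseteq(\Pro^1_k)^2$ by showing that over it $H_n$ coincides with the affine surface $U_n$, and that $p_{ij}$ carries $U_n$ isomorphically onto $\A^2_k$. I would proceed in three steps.

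\emph{Step 1: $p_{ij}$ restricts to an isomorphism $U_n\cong\A^2_k$.} A point of $U_n$ is exactly a tuple $(u_1,\dots,u_n)$ forming an arithmetic progression, i.e.\ $u_m=P+Qm$ for some $P,Q\in k$. Since $\mathrm{char}\,k=0$, and hence $j-i\ne 0$ in $k$, the two values $u_i,u_j$ determine $P,Q$, so $u_m=((j-m)u_i+(m-i)u_j)/(j-i)$. This gives an explicit linear map $\A^2_k\to\A^n_k$, $(a,b)\mapsto(((j-m)a+(m-i)b)/(j-i))_{1\le m\le n}$, whose image is an arithmetic progression, hence lands in $U_n$, and which is a two-sided inverse of $p_{ij}|_{U_n}$.

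\emph{Step 2 (the crux): $H_n\cap p_{ij}^{-1}(\A^2_k)=U_n$.} Write $[v^{(0)}_m:v^{(1)}_m]$ for homogeneous coordinates on the $m$-th factor, so that $u_m=v^{(1)}_m/v^{(0)}_m$ on $\A^n_k$ and $p_{ij}^{-1}(\A^2_k)=\{v^{(0)}_i\ne 0,\ v^{(0)}_j\ne 0\}$. For any triple $p<q<s$, the relation $(s-q)u_p-(s-p)u_q+(q-p)u_s=0$ holds on every arithmetic progression, hence on $U_n$; clearing denominators, the multihomogeneous form
\[
R_{p,q,s}=(s-q)\,v^{(1)}_p v^{(0)}_q v^{(0)}_s-(s-p)\,v^{(0)}_p v^{(1)}_q v^{(0)}_s+(q-p)\,v^{(0)}_p v^{(0)}_q v^{(1)}_s
\]
vanishes on $U_n$, hence on its closure $H_n$. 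Fix any $m\notin\{i,j\}$ (which exists because $n\ge 3$) and take $\{p,q,s\}=\{i,j,m\}$ in increasing order. As a polynomial in $v^{(0)}_m,v^{(1)}_m$ the form $R_{p,q,s}$ is linear, and — checking the three cases $m<i$, $i<m<j$, $m>j$ — the coefficient of $v^{(1)}_m$ equals $\pm(j-i)\,v^{(0)}_i v^{(0)}_j$. Thus at any $x\in H_n\cap p_{ij}^{-1}(\A^2_k)$ this coefficient is nonzero (again using $\mathrm{char}\,k=0$), so $R_{p,q,s}(x)=0$ exhibits $v^{(1)}_m(x)$ as a scalar multiple of $v^{(0)}_m(x)$; were $v^{(0)}_m(x)=0$ we would get $v^{(1)}_m(x)=0$, which is impossible in $\Pro^1$. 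Hence every coordinate of $x$ is finite, i.e.\ $x\in H_n\cap\A^n_k$. Since $U_n$ is closed in $\A^n_k$, we have $H_n\cap\A^n_k=\overline{U_n}\cap\A^n_k=U_n$, and together with the obvious inclusion $U_n\subseteq p_{ij}^{-1}(\A^2_k)$ this gives the claimed equality.

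\emph{Step 3: conclusion.} Above $\A^2_k$ the morphism $p_{ij}|_{H_n}$ coincides, by Step 2, with $p_{ij}|_{U_n}\colon U_n\to\A^2_k$, which is an isomorphism by Step 1; in particular it is finite of degree $1$. The main obstacle is Step 2: a priori the Zariski closure $H_n$ could acquire extra points over $\A^2_k$ — for instance a boundary curve contracted by $p_{ij}$ — and it is precisely the explicit collinearity relations $R_{p,q,s}$ that forbid this, by forcing every coordinate of a point of $H_n$ to be recovered linearly from its $i$-th and $j$-th coordinates.
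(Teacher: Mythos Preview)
Your proof is correct and follows the same two-step strategy as the paper: first identify $p_{ij}^{-1}(\A^2_k)\cap H_n$ with $U_n$, then note that $p_{ij}$ restricts to an isomorphism $U_n\simeq\A^2_k$ via the explicit linear formula. The only difference is in how Step~2 is executed: the paper argues by propagation from the consecutive second-difference equations (if one coordinate is $\infty$ then all but at most one are $\infty$, so two finite coordinates force all finite), whereas you homogenize the general three-term collinearity relation $R_{i,j,m}$ and read off $v^{(0)}_m\ne 0$ directly for each $m$---a slightly more explicit but equivalent argument.
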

\begin{proof} From the equations of $U_n$, we note that if a point $(\alpha_1,...,\alpha_n)\in H_n$ has some coordinate $\alpha_j=\infty\in \Pro^1_k$, then all other coordinates with at most one exception are also equal to $\infty$. Thus, the preimage of $\A^2_k$ under $p_{ij}|_{H_n}:H_n\to (\Pro^1_k)^2$ is precisely $U_n$.  Finally, since $k$ has characteristic $0$, fixing $u_i$ and $u_j$ in $k$ (with $i\ne j$) determines a unique point in $U_n$, namely
$$
u_\ell =\frac{\ell-j}{i-j}\cdot  u_i + \frac{\ell-i}{j-i}\cdot u_j, \quad 1\le \ell \le n.
$$
\end{proof}

\subsection{The surfaces $V_n$ and $X_n$} Let us define 
$$
X_n=G^{-1}_n(H_n)\subseteq A \quad \mbox{ and }\quad V_n = G^{-1}_n(U_n)\subseteq X_n.
$$
\begin{lemma}\label{LemmaSurfacesUp}
We have that $X_n$ is a projective surface and $V_n$ is a dense open subset in $X_n$.  Moreover, the morphism $G_n:E^n\to (\Pro^1_k)^n$ restricts to a morphism $G'_n:X_n\to H_n$ which is surjective, finite of degree $d^n$, and flat. 
 \end{lemma}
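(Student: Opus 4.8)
The plan is to deduce Lemma~\ref{LemmaSurfacesUp} directly from the properties of $G_n$ established in Lemma~\ref{LemmaFlat} together with the geometry of $H_n$ and $U_n$ from Lemmas~\ref{LemmaSurfacesDown}--\ref{LemmaIsomHn}. The key point is that base change along the finite flat surjection $G_n\colon A\to(\Pro^1_k)^n$ preserves all the relevant properties, so most assertions are formal.

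First I would treat $G'_n\colon X_n\to H_n$. Since $X_n=G_n^{-1}(H_n)$ is by definition the scheme-theoretic fiber product $A\times_{(\Pro^1_k)^n} H_n$, the projection $G'_n$ is the base change of $G_n$ along the closed immersion $H_n\hookrightarrow(\Pro^1_k)^n$. Base change preserves finiteness, flatness, and the degree, so by Lemma~\ref{LemmaFlat} the map $G'_n$ is finite flat of degree $d^n$; surjectivity follows because $G_n$ is surjective and $H_n$ is contained in its image (indeed $G_n$ is surjective, hence so is any base change of it onto the base). Next, to see that $X_n$ is a projective surface I would argue that $X_n\to H_n$ is finite and $H_n$ is an irreducible projective surface (Lemma~\ref{LemmaSurfacesDown}), so $X_n$ is projective of pure dimension $2$; that $X_n$ is indeed a surface (as opposed to something lower-dimensional) is automatic from finite surjectivity onto a surface. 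For $V_n=G_n^{-1}(U_n)$ being dense open in $X_n$: $U_n$ is open dense in $H_n$ since $U_n$ is the complement in $H_n$ of the intersection with the boundary divisors $p_j^{-1}(\infty)$, and this complement is nonempty; pulling back along the flat (hence open-map on the relevant locus) and surjective $G'_n$ gives that $V_n=(G'_n)^{-1}(U_n)$ is open in $X_n$, and it is dense because $G'_n$ is surjective with no component of $X_n$ mapping into the closed complement $H_n\setminus U_n$ (as every component of $X_n$ dominates $H_n$, $G'_n$ being finite flat over an irreducible base).

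The one place deserving a little care — and the step I expect to be the main (mild) obstacle — is justifying that \emph{every} irreducible component of $X_n$ dominates $H_n$, equivalently that $V_n$ meets every component, which is what makes ``dense open'' correct rather than just ``open''. This is where flatness of $G'_n$ over the irreducible base $H_n$ is essential: a finite flat morphism over an integral scheme has all associated points (hence all generic points of components of the source) lying over the generic point of the base, so each component of $X_n$ dominates $H_n$ and therefore meets the dense open $V_n$. One should also note $X_n$ need not be irreducible or reduced a priori, but ``projective surface'' in this paper only asserts dimension and projectivity, so no irreducibility claim is being made here; irreducibility will presumably be addressed separately.

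Finally I would remark that the finiteness of $G'_n$ also gives at once that $X_n$ is quasi-projective (being finite over the projective $H_n$) and in fact projective, completing all the assertions. In short, the proof is: identify $X_n$ and $G'_n$ as a base change of $G_n$ along $H_n\hookrightarrow(\Pro^1_k)^n$; invoke stability of finite, flat, surjective, and degree under base change (Lemma~\ref{LemmaFlat}); use that $H_n$ is an irreducible projective surface (Lemma~\ref{LemmaSurfacesDown}) and $U_n$ is dense open in it; and use flatness over the integral base $H_n$ to conclude $V_n$ is dense in $X_n$.
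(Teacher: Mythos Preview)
Your proof is correct and follows essentially the same strategy as the paper: identify $G'_n$ as the base change of $G_n$ along the closed immersion $H_n\hookrightarrow(\Pro^1_k)^n$ and transport the properties of Lemma~\ref{LemmaFlat}. The minor differences are organizational. For the degree, the paper argues that $H_n$ is not contained in the branch locus of $G_n$ (using Lemma~\ref{LemmaSurfacesDown}) so that the generic fiber of $G'_n$ has $d^n$ points; your observation that the rank of a finite locally free morphism is stable under base change is cleaner and avoids this detour. For density of $V_n$, the paper uses that flat finite-type maps are open, hence $G_n^{-1}(\overline{U_n})=\overline{G_n^{-1}(U_n)}$, giving $X_n=\overline{V_n}$ directly; your route via ``associated points of a flat algebra over an integral base lie over the generic point'' (so every component of $X_n$ dominates $H_n$) is equally valid and is the standard commutative-algebra incarnation of the same phenomenon. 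Either argument is a one-line consequence of flatness, so neither buys anything substantial over the other.
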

\begin{proof}
Since $G_n$ is flat (cf.\ Lemma \ref{LemmaFlat}), it is open by \cite{Hartshorne} III Exer.\ 9.1. It follows that $G^{-1}_n(cl(S))=cl(G^{-1}_n(S))$ for every set $S\subseteq (\Pro^1_k)^n$, where $cl(-)$ denotes Zariski closure. As $cl(U_n)=H_n$, we get that $X_n$ is the Zariski closure of $V_n$.

 The branch divisor of $G_n$ is $\sum_{j=1}^n p_j^* B_g$ where $B_g\subseteq \Pro^1_k$ is the branch divisor of $g$. From Lemma \ref{LemmaSurfacesDown} we deduce that $H_n$ is not contained in the branch locus of $G_n$. It follows that $G_n$ restricts to a finite surjective map $G'_n:X_n\to H_n$ of degree $d^n$. We note that $V_n=G^{-1}_n(U_n)=(G')^{-1}_n(U_n)$ and $U_n$ is open in $H_n$, thus $V_n$ is open.
 
 Finally, note that $G'_n$ is the base change of $G_n$ by the closed immersion $H_n\to (\Pro^1_k)^n$, hence $G'_n$ is flat (cf.\ \cite{Hartshorne} III Prop.\ 9.2). As $G_n$ has pure relative dimension $0$, we obtain from Lemma \ref{LemmaSurfacesDown} and \cite{Hartshorne} III Coro.\ 9.6 that $\dim X_n=\dim H_n=2$. 
\end{proof}

\subsection{A line sheaf on $E^n$} Let $\pi_j:E^n\to E$ be the $j$-th coordinate projection. Let $e_E$ be the neutral point of $E$ and consider the following line sheaf on $E^n$:
$$
\Lcal_n=\Ocal\left(\sum_{j=1}^n\pi_j^*e_E\right).
$$
\begin{lemma} \label{LemmaAmpleSym} The line sheaf $\Lcal_n$ on the abelian variety  $E^n$ is ample and symmetric. 
\end{lemma}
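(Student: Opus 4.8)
The plan is to verify the two required properties of $\Lcal_n = \Ocal\left(\sum_{j=1}^n \pi_j^* e_E\right)$ separately, reducing each to a standard fact about line bundles on elliptic curves and abelian varieties.

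First I would check symmetry. Recall that a line sheaf $\Lcal$ on an abelian variety $A$ is symmetric if $[-1]^*\Lcal \cong \Lcal$, where $[-1]$ is the inversion morphism. For $A = E^n$, inversion acts coordinatewise, so $[-1]^*\pi_j^* e_E = \pi_j^*([-1]_E^* e_E)$, and on the elliptic curve $E$ the divisor $e_E$ (the neutral point) is fixed by inversion since $[-1]_E(e_E) = e_E$; more precisely $[-1]_E^* \Ocal(e_E) \cong \Ocal(e_E)$ because $e_E$ is a $2$-torsion-stable point and the divisor class of a single point on a curve is determined by its degree together with... no, one must be slightly careful: on a curve divisor classes are not determined by degree. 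The clean argument is that $[-1]_E$ fixes $e_E$ as a point, hence fixes the divisor $e_E$ itself, hence certainly fixes its associated line sheaf. Pulling back the sum then gives $[-1]^*\Lcal_n \cong \Lcal_n$, so $\Lcal_n$ is symmetric.

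Next I would check ampleness. Each $\pi_j : E^n \to E$ is a smooth surjective morphism, and $\Ocal_E(e_E)$ is ample on $E$ since it has positive degree $1$ on the curve $E$. However, $\pi_j^*\Ocal_E(e_E)$ is only semi-ample (it is trivial along the fibers of $\pi_j$), so I cannot conclude ampleness of $\Lcal_n$ from a single factor. Instead I would use the standard fact that on a product $X_1 \times \cdots \times X_n$ the external tensor product $\boxtimes_j \Lcal_j = \bigotimes_j \mathrm{pr}_j^* \Lcal_j$ of ample line sheaves $\Lcal_j$ on the $X_j$ is ample. Here $\Lcal_n$ is exactly such an external product with each factor equal to $\Ocal_E(e_E)$, which is ample on $E$; hence $\Lcal_n$ is ample on $E^n$. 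Alternatively, and perhaps more self-containedly, one can invoke that a line sheaf on an abelian variety is ample iff the associated symmetric bilinear/quadratic form (equivalently, the map to the dual abelian variety $\phi_{\Lcal}$) is a positive-definite / an isogeny, and for $\Lcal_n = \boxtimes \Ocal_E(e_E)$ this map is the product of $n$ copies of the canonical principal polarization $\phi_{\Ocal_E(e_E)} : E \to \hat E$, which is an isogeny, so $\Lcal_n$ is ample (in fact it induces a principal polarization on $E^n$).

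The only point requiring genuine care is the ampleness argument: one must not be tempted to deduce it from ampleness of $\Ocal_E(e_E)$ on each factor, since pullbacks along projections are never ample when $n \ge 2$. The resolution — recognizing $\Lcal_n$ as an external product of ample sheaves, or equivalently as the box product polarization on $E^n$ — is the key step, and it is genuinely a product phenomenon rather than a factorwise one. Symmetry, by contrast, is essentially formal once one observes that inversion on $E^n$ is coordinatewise and fixes each $e_E$. I would present the ampleness via the external tensor product fact (citing e.g.\ standard references on abelian varieties or ample line bundles on products), and dispatch symmetry in one line.
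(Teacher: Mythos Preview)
Your proposal is correct and follows essentially the same approach as the paper: symmetry is deduced from the fact that $[-1]_E$ fixes the point $e_E$ (hence the divisor $\pi_j^*e_E$ is preserved under coordinatewise inversion), and ampleness is obtained by recognizing $\Lcal_n$ as the external tensor product $\boxtimes_{j=1}^n \Ocal_E(e_E)$ of the ample degree-one sheaf on each factor. The paper's proof is terser and omits your cautionary remarks about individual pullbacks not being ample, but the argument is the same.
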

\begin{proof} For $m\in \Z$ and $B$ an abelian variety, we write $[m]_B$ for the endomorphism of multiplication by $m$ on $B$. Let $A=E^n$ as before. We have $[-1]_E^*e_E=e_E$, hence
$$
\begin{aligned}[]
[-1]_A^*\sum_{j=1}^n\pi_j^*e_E &= \sum_{j=1}^n[-1]_A^*\pi_j^*e_E=\sum_{j=1}^n(\pi_j[-1]_A)^*e_E\\
&= \sum_{j=1}^n([-1]_E\pi_j)^*e_E= \sum_{j=1}^n\pi_j^*[-1]_E^*e_E=\sum_{j=1}^n\pi_j^*e_E.
\end{aligned}
$$
It follows that $\Lcal_n$ is symmetric on $E^n$. Since $\Lcal_n\simeq \bigotimes_{j=1}^n \pi_j^*\Ocal(e_E)$ and $\Ocal(e_E)$ is ample on $E$ (it has degree $1$), we get that $\Lcal_n$ is ample on $E^n$. 
\end{proof}

\subsection{Degree estimates} Given a line sheaf $\Fcal$ on a smooth projective variety $Y$ and a closed set $Z\subseteq Y$, we define $\deg_\Fcal Z$ as $\deg([\Fcal]^{\dim Z}\cdot [Z])$ if $Z$ is irreducible, and we extend this definition linearly for general $Z$. Here, the intersection product occurs in the Chow ring $Ch(Y)=\oplus_j Ch^j(Y)$ of $Y$ (graded by codimension) and $\deg: Ch_0(Y)\to\Z$ is the usual degree map on $0$-cycles ---we use the standard convention that $Ch^j$ denotes codimension $j$ cycles, while $Ch_j$ denotes cycles of dimension $j$. For instance, see Appendix A in \cite{Hartshorne} for a survey of intersection theory.
\begin{lemma}\label{LemmaDegXn} We have $\deg_{\Lcal_n} X_n \leq (n^2-n)d^{2n-2}$.
\end{lemma}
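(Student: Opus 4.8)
The plan is to reduce the computation of the intersection number $\deg_{\Lcal_n}X_n=\deg\!\big(c_1(\Lcal_n)^2\cdot[X_n]\big)$ to an elementary computation on $(\Pro^1_k)^n$, exploiting that $G_n$ is finite flat of degree $d^n$. First I would record the class of $H_n$. Writing $Ch^\bullet((\Pro^1_k)^n)\cong\Z[t_1,\dots,t_n]/(t_1^2,\dots,t_n^2)$ with $t_\ell=p_\ell^*[\mathrm{pt}]$, and using that $H_n$ is an irreducible surface (Lemma \ref{LemmaSurfacesDown}), its class lies in codimension $n-2$, so $[H_n]=\sum_{1\le i<j\le n}a_{ij}\prod_{\ell\ne i,j}t_\ell$ with $a_{ij}=\deg\big([H_n]\cdot t_it_j\big)\in\Z_{\ge0}$. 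Since $t_it_j=p_{ij}^*[\mathrm{pt}]$, the coefficient $a_{ij}$ is exactly the degree of the finite morphism $p_{ij}|_{H_n}\colon H_n\to(\Pro^1_k)^2$, which equals $1$ by Lemma \ref{LemmaIsomHn}. Hence $[H_n]=\sum_{i<j}\prod_{\ell\ne i,j}t_\ell$, and intersecting with $\big(\sum_m t_m\big)^2=2\sum_{m<m'}t_mt_{m'}$ (using $t_m^2=0$ and $\deg(t_1\cdots t_n)=1$) gives $\deg_{\Ocal(1,\dots,1)}H_n=2\binom n2=n(n-1)$, where $\Ocal(1,\dots,1)=\bigotimes_\ell p_\ell^*\Ocal_{\Pro^1}(1)$.

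Next I would descend along $G_n$. Put $h_\ell:=c_1\big(\pi_\ell^*\Ocal_E(e_E)\big)=[\pi_\ell^{-1}(e_E)]$, so $c_1(\Lcal_n)=\sum_{\ell=1}^n h_\ell$. Since $g^*\Ocal_{\Pro^1}(1)$ is a line bundle of degree $\deg(g)=d$ on the elliptic curve $E$, on a curve it is numerically equivalent to $\Ocal_E(d\cdot e_E)$; pulling back by $\pi_\ell$ and using $p_\ell\circ G_n=g\circ\pi_\ell$ yields $G_n^*t_\ell=c_1\big(\pi_\ell^*g^*\Ocal_{\Pro^1}(1)\big)\equiv d\,h_\ell$ numerically. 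Because $G_n$ is finite flat (Lemma \ref{LemmaFlat}) and $H_n$ is not contained in the branch locus of $G_n$ (proof of Lemma \ref{LemmaSurfacesUp}), flat pullback of cycles gives $[X_n]=G_n^*[H_n]$ with no ramification multiplicities; and since flat pullback is compatible with products of first Chern classes of line bundles, Step 1 gives, numerically, $[X_n]=G_n^*[H_n]\equiv d^{\,n-2}\sum_{i<j}\prod_{\ell\ne i,j}h_\ell$.

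Finally I would intersect with $\Lcal_n$. On the curve $E$ one has $e_E^2=0$, hence $h_\ell^2=0$ and $c_1(\Lcal_n)^2=2\sum_{m<m'}h_mh_{m'}$; moreover $\deg(h_1\cdots h_n)=1$ because $\bigcap_\ell\pi_\ell^{-1}(e_E)$ is the reduced point $(e_E,\dots,e_E)$, while every degree-$n$ monomial in the $h_\ell$ with a repeated index vanishes. Thus for each pair $i<j$, $\deg\big(c_1(\Lcal_n)^2\cdot\prod_{\ell\ne i,j}h_\ell\big)=\deg\big(2h_ih_j\prod_{\ell\ne i,j}h_\ell\big)=2$, and summing over the $\binom n2$ pairs gives $\deg_{\Lcal_n}X_n=d^{\,n-2}\cdot 2\binom n2=(n^2-n)d^{\,n-2}\le(n^2-n)d^{\,2n-2}$ since $d\ge1$. (Equivalently, since $c_1(\Lcal_n)=\tfrac1d\,G_n^*c_1(\Ocal(1,\dots,1))$ with $\Q$-coefficients, the projection formula for the finite flat map $G'_n\colon X_n\to H_n$ of degree $d^n$ from Lemma \ref{LemmaSurfacesUp} gives $\deg_{\Lcal_n}X_n=\tfrac{d^n}{d^2}\deg_{\Ocal(1,\dots,1)}H_n=d^{\,n-2}n(n-1)$ directly from Step 1.)

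The only genuinely delicate point is the passage $[X_n]=G_n^*[H_n]$: one must know that no ramification multiplicity enters (this uses both that $X_n$ is pure of dimension $2$ and that $H_n$ avoids the branch locus of $G_n$, both from Lemma \ref{LemmaSurfacesUp}), and that flat pullback may be applied factor-by-factor to the monomials $\prod_{\ell\ne i,j}t_\ell$. Everything else is bookkeeping in the explicit Chow rings of $(\Pro^1_k)^n$ and $E^n$ together with Lemmas \ref{LemmaIsomHn} and \ref{LemmaSurfacesUp}.
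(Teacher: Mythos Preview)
Your proof is correct and follows essentially the same approach as the paper: both use $[X_n]=G_n^*[H_n]$ via flatness of $G_n$, the vanishing $h_\ell^2=0$, the degree-$1$ property of $p_{ij}|_{H_n}$ from Lemma~\ref{LemmaIsomHn}, and the projection formula, arriving at the sharper value $(n^2-n)d^{\,n-2}$ before bounding by $(n^2-n)d^{\,2n-2}$. The only organizational difference is that you first compute $[H_n]$ explicitly in $Ch^\bullet((\Pro^1_k)^n)$ and then pull back and intersect on $E^n$, whereas the paper pushes the $0$-cycle $[\Lcal_n]^2\cdot[X_n]$ forward to $(\Pro^1_k)^n$ and intersects there; these are dual uses of the projection formula. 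One small imprecision: you write that ``$H_n$ avoids the branch locus of $G_n$,'' but the paper (Lemma~\ref{LemmaSurfacesUp}) only asserts that $H_n$ is not \emph{contained} in the branch locus --- this does not affect your argument, since for flat pullback the identity $G_n^*[H_n]=[G_n^{-1}(H_n)]$ holds regardless, and in any case $[X_n]_{\mathrm{red}}\le G_n^*[H_n]$ suffices for the stated inequality.
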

\begin{proof} For the following computations, let us recall that if $f:Y\to Z$ is a morphism of smooth projective varieties over $k$, then the pull-back $f^*:Ch(Z)\to Ch(Y)$ is a graded ring morphism, while the push-forward $f_*:Ch(Y)\to Ch(Z)$ respects addition and shifts the grading.

As $X_n$ is a surface, we have $\deg_{\Lcal_n} X_n=\deg([\Lcal_n]^{2}\cdot[X_n])$. We expand the intersection product
$$
\begin{aligned}[]
[\Lcal_n]^{2}\cdot[X_n] & =\sum_{i=1}^n\sum_{j=1}^n [\pi_i^*e_E]\cdot [\pi_j^*e_E]\cdot [X_n]\\
&=\sum_{i=1}^n [\pi_i^*e_E]^2\cdot [X_n] + 2\sum_{1\le i<j\le n} [\pi_i^*e_E]\cdot [\pi_j^*e_E]\cdot [X_n]
\end{aligned}
$$
Moving $e_E$ on $E$, we see that  $[\pi_i^*e_E]^2=0\in Ch^2(A)$. On the other hand, $[X_n]=G^*_n[H_n]\in Ch_2(A)$ by Lemma \ref{LemmaSurfacesUp}. Hence, the projection formula gives the following identities in $Ch_0((\Pro^1_k)^n)$
$$
\begin{aligned}
(G_n)_*\left([\Lcal_n]^{2}\cdot[X_n] \right)&= 2\sum_{1\le i<j\le n} (G_n)_*\left( [\pi_i^*e_E]\cdot [\pi_j^*e_E]\cdot G^*_n[H_n]\right)\\
&= 2\sum_{1\le i<j\le n} (G_n)_*([\pi_i^*e_E]\cdot [\pi_j^*e_E])\cdot [H_n]\\
&= 2\sum_{1\le i<j\le n} (G_n)_*([\pi_{ij}^*((e_E,e_E))])\cdot [H_n]
\end{aligned}
$$
where $\pi_{ij}:E^n\to E^2$ is the projection onto the $i$ and $j$ coordinates. 

Note that $\pi_{ij}^*((e_E,e_E))$ is obtained from $E^n$ by replacing the copies of $E$ in the coordinates $i$ and $j$ by $\{e_E\}$. Let $p_{ij}:(\Pro^1_k)^n\to (\Pro^1_k)^2$ be the projection onto the coordinates $i$ and $j$. We deduce that the map $\pi_{ij}^*((e_E,e_E))\to p_{ij}^*((g(e_E),g(e_E)))$ induced by $G_n$ is (up to the obvious isomorphisms) the same as $G_{n-2}:E^{n-2}\to (\Pro^1_k)^{n-2}$, which has degree $d^{n-2}$ by Lemma \ref{LemmaFlat}.  This gives
$$
(G_n)_*([\pi_{ij}^*((e_E,e_E))]) = d^{n-2} [p_{ij}^*((g(e_E),g(e_E))]\in Ch^2((\Pro^1_k)^n).
$$
Choose a $k$-rational point $x\in \A^1_k\subseteq \Pro^1_k$. For $i<j$, Lemma \ref{LemmaIsomHn} gives the following on $(\Pro^1_k)^n$
$$
\deg\left( [p_{ij}^*((g(e_E),g(e_E))]\cdot [H_n]\right)=\deg\left([p_{ij}^*((x,x))]\cdot [H_n]\right)=1.
$$
From here we deduce
$$
\deg \left((G_n)_*\left([\Lcal_n]^{2}\cdot[X_n] \right) \right)= (n^2-n)d^{n-2}.
$$
As $G_n:A\to (\Pro^1_k)^n$ is finite of degree $d^n$ (cf.\ Lemma \ref{LemmaFlat}), we get the desired bound.
\end{proof}


\section{Arithmetic progressions for finite rank groups} \label{SecArith}

\subsection{Main arithmetic result} Let $L$ be a field. An \emph{arithmetic progression} in $L$ is a sequence $u_1,...,u_n$ of elements of $L$ such that for some $a,b\in L$ we have $u_j=a+jb$ for each $j=1,...,n$. We say that the arithmetic progression $u_1,...,u_n$ is \emph{trivial} if al the terms $u_j$ are equal, i.e.\ $b=0$. Otherwise, the arithmetic progression is said to be \emph{non-trivial}.

The rank of an abelian group $\Gamma$ is defined as 
$$
\rk \Gamma = \dim_\Q(\Gamma\otimes_\Z\Q).
$$
In particular, if $\Gamma$ is a torsion abelian group, then $\rk \Gamma=0$.

\begin{theorem}\label{ThmMainArith} Let $j_0\in \Q^{alg}$ and let $d\ge 2$ be an integer. There is an effectively computable constant $\kappa(j_0,d)$ depending only on $j_0$ and $d$ such that the following holds:

 Let $E$ be an elliptic curve over $\Q^{alg}$ with $j$-invariant equal to $j_0$. Let $g\in k(E)$ be a non-constant rational function on $E$ of degree $d$ defined over $\Q^{alg}$. Let $\Gamma\subseteq E(\Q^{alg})$ be a subgroup of finite rank. Suppose that for a positive  integer $N$  there is  a sequence $P_1,...,P_N$ of points in $\Gamma$ such that no $P_j$ is a pole of $g$, and the sequence $g(P_1),...,g(P_N)\in \Q^{alg}$ is a non-trivial arithmetic progression. Then
$$
1+ \rk \Gamma > \kappa(j_0,d)\cdot \log N.
$$
\end{theorem}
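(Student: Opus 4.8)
The plan is to transfer the problem to the geometry of the surfaces $X_n\subseteq A:=E^n$ constructed in Section~\ref{SecGeom} and then invoke R\'emond's quantitative form of Faltings' theorem. Fix once and for all $n:=10d^2-4d+2$, which is $\ge 3$ since $d\ge 2$. With $G_n\colon A\to(\Pro^1)^n$ built from $n$ copies of $g$, keep the surfaces $V_n\subseteq X_n\subseteq A$ and the ample symmetric line sheaf $\Lcal_n$ of Section~\ref{SecGeom}. From the non-trivial arithmetic progression $g(P_1),\dots,g(P_N)$ every window $Q_j:=(P_j,P_{j+1},\dots,P_{j+n-1})$ with $1\le j\le N-n+1$ satisfies $g(P_{i+1})-2g(P_i)+g(P_{i-1})=0$ for consecutive indices, so $G_n(Q_j)\in U_n$, i.e.\ $Q_j\in V_n(\Q^{alg})$; since no $P_i$ is a pole of $g$, every coordinate of $Q_j$ lies in the affine chart. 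Because the progression is non-constant we have $P_j\ne P_{j'}$ whenever $j\ne j'$, so the $Q_j$ are $N-n+1$ distinct points of $V_n(\Q^{alg})$ lying in the finite-rank group $\Gamma^n\subseteq A(\Q^{alg})$, whose rank is $n\cdot\rk\Gamma$.

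The heart of the argument is to show that no $Q_j$ lies on the Kawamata (special) locus $Z(X_n)$, i.e.\ on a translate $B+x\subseteq X_n$ of a positive-dimensional abelian subvariety $B\subseteq A$. If $B+x\subseteq X_n\setminus V_n$, then by the description of $H_n\setminus U_n$ used in the proof of Lemma~\ref{LemmaIsomHn}, every point of $B+x$ has at least $n-1$ coordinates equal to $\infty$; as $B+x$ is irreducible and $n\ge 3$, this forces all but one coordinate projection of $B+x$ to be constant and equal to a pole of $g$, so $B+x$ is one of the ``vertical'' curves $\{c_1\}\times\dots\times E\times\dots\times\{c_n\}$ with $c_i$ poles of $g$, which $Q_j\in V_n$ avoids. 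Otherwise $(B+x)\cap V_n\ne\emptyset$; choosing a complex line through the origin of the universal cover of $B$ generically and translating, we get a non-constant holomorphic $\psi=(\phi_1,\dots,\phi_n)\colon\C\to B+x$ with $\psi^{-1}(V_n)$ dense and with Zariski-dense image, so the meromorphic functions $f_\ell:=g\circ\phi_\ell\in\Mcal$ satisfy $f_{\ell+1}-2f_\ell+f_{\ell-1}\equiv 0$, hence $f_\ell=F_1+\ell F_2$ with $F_1=2f_1-f_2$, $F_2=f_2-f_1\in\Mcal$. If two of the $\phi_\ell$ were constant, then (since the $f_\ell$ depend affinely on $\ell$) $F_1$ and $F_2$ would be constant, so all $f_\ell$, hence all $\phi_\ell$ (as $g$ is finite), would be constant, contradicting that $\psi$ is non-constant; so at least $n-1$ of the $\phi_\ell$ are non-constant. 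Now either $F_2\equiv 0$, in which case $g\circ\pi_1=\dots=g\circ\pi_n$ on $\psi(\C)$ and hence on $B+x$, so $B+x\subseteq G_n^{-1}(\text{small diagonal of }(\Pro^1)^n)$ and $Q_j\in B+x$ would give $g(P_j)=\dots=g(P_{j+n-1})$, contradicting non-triviality of the progression; or $F_2\not\equiv 0$, and then Theorem~\ref{ThmMainHolo} applied to the $\ge n-1$ non-constant functions $f_\ell=F_1+a_\ell F_2$ with distinct $a_\ell=\ell$ forces $n-1\le 10d^2-4d$, contradicting the choice of $n$. Thus every $Q_j$ lies in $(X_n\setminus Z(X_n))(\Q^{alg})\cap\Gamma^n$.

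Finally, apply R\'emond's quantitative version of Faltings' theorem to the surface $X_n\subseteq A=E^n$ polarized by $\Lcal_n$ (ample and symmetric by Lemma~\ref{LemmaAmpleSym}): the number of points of $\Gamma^n$ on $X_n$ outside its Kawamata locus is bounded effectively in terms of $\dim A=n$, $\deg_{\Lcal_n}X_n$, $\rk\Gamma^n=n\rk\Gamma$, and the Faltings height of $A$, which equals $n\,h_F(E)$. Using $\deg_{\Lcal_n}X_n\le (n^2-n)d^{2n-2}$ from Lemma~\ref{LemmaDegXn}, the fact that $n=n(d)$ depends only on $d$, and a bound for $h_F(E)$ in terms of $j_0$ (obtained in Section~\ref{SecPazuki} via the comparison between the Theta height and the Faltings height), we get $N-n+1\le\#\bigl((X_n\setminus Z(X_n))(\Q^{alg})\cap\Gamma^n\bigr)\le C(j_0,d)^{1+\rk\Gamma}$ for an effectively computable constant $C(j_0,d)>1$; after enlarging $C$ this also covers the trivial range $N<n$. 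Taking logarithms gives $1+\rk\Gamma\ge \log N/\log C(j_0,d)$, and shrinking the constant slightly to replace $\ge$ by $>$ yields the claim with, say, $\kappa(j_0,d)=1/\bigl(2\log C(j_0,d)\bigr)$.

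I expect the Kawamata-locus step to be the main obstacle. One must be confident that the only positive-dimensional translates inside $X_n$ are the ``vertical'' curves lying over poles of $g$ and the twisted-diagonal curves contained in $G_n^{-1}(\text{diagonal})$ — neither of which can contain a window of a genuine non-constant progression — and the exclusion of all other translates is exactly what the Nevanlinna-theoretic input Theorem~\ref{ThmMainHolo} supplies, via the passage from a hypothetical translate to an entire curve whose coordinate functions form an arithmetic progression in $\Mcal$. A secondary, more routine, difficulty is the bookkeeping that turns R\'emond's bound, Lemma~\ref{LemmaDegXn}, and the Faltings-height estimate into a clean closed form for $\kappa(j_0,d)$.
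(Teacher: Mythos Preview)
Your proposal is correct and follows essentially the same approach as the paper: same choice $n=10d^2-4d+2$, same surfaces $X_n\subseteq E^n$, same use of Theorem~\ref{ThmMainHolo} to rule out positive-dimensional translates, and the same appeal to R\'emond's theorem with $\Lambda=\Gamma^n$. The only difference is organizational: the paper introduces $V_n^0=G_n^{-1}(U_n\setminus\Delta_n)$ and proves once (Lemma~\ref{LemmaKawamataLocus}) that every positive-dimensional translate lies in $Z_n=X_n\setminus V_n^0$, whereas you inline this by a case split on whether the translate meets $V_n$ and whether $F_2\equiv 0$; your subcase $F_2\equiv 0$ corresponds exactly to the translate landing in $G_n^{-1}(\Delta_n)\subseteq Z_n$, and your Case~1 analysis of vertical curves is unnecessary (since $Q_j\in V_n$ already excludes any translate contained in $X_n\setminus V_n$) but harmless.
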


We remark that if $L$ is a field of positive characteristic $p>0$, then a non-trivial arithmetic progression in $L$ can have repeated terms. However, if $L$ has characteristic $0$, then an arithmetic progression is non-trivial (i.e.\ not all terms are the same) if and only if all its terms are different. For our purposes, we will work in characteristic zero. 

We will need the following characterization of arithmetic progressions.

\begin{lemma}\label{LemmaAP2ndDiff} Let $n\ge 3$ be an integer, let $L$ be a field, and let $u_1,...,u_n\in L$. The sequence $u_1,...,u_n$ is an arithmetic progression if and only if $u_{j}-2u_{j-1} +u_{j-2}=0$ for each $3\le j\le n$. 
\end{lemma}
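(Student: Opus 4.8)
The plan is to prove Lemma~\ref{LemmaAP2ndDiff}, the elementary characterization of arithmetic progressions via vanishing of second differences. This is a purely algebraic statement over an arbitrary field $L$, so the proof will be a direct verification in both directions.

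First, for the forward direction, suppose $u_1,\dots,u_n$ is an arithmetic progression, so there exist $a,b\in L$ with $u_j=a+jb$ for all $j$. Then for each $3\le j\le n$ we compute
$$
u_j - 2u_{j-1} + u_{j-2} = (a+jb) - 2(a + (j-1)b) + (a + (j-2)b) = (1-2+1)a + (j - 2(j-1) + (j-2))b = 0,
$$
since the coefficient of $a$ is $0$ and the coefficient of $b$ is $j - 2j + 2 + j - 2 = 0$. Note this computation is valid over any field, including positive characteristic.

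For the converse, suppose $u_j - 2u_{j-1} + u_{j-2} = 0$ for each $3\le j\le n$. Set $b = u_2 - u_1$ and $a = u_1 - b = 2u_1 - u_2$, so that $u_1 = a + b$ and $u_2 = a + 2b$. I will show by induction on $j$ that $u_j = a + jb$ for all $1\le j\le n$. The base cases $j=1$ and $j=2$ hold by construction. For the inductive step, assume $j\ge 3$ and that $u_{j-1} = a+(j-1)b$ and $u_{j-2} = a + (j-2)b$. The hypothesis gives $u_j = 2u_{j-1} - u_{j-2} = 2(a+(j-1)b) - (a+(j-2)b) = a + (2(j-1) - (j-2))b = a + jb$, completing the induction. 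Hence $u_1,\dots,u_n$ is an arithmetic progression.

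There is no real obstacle here; the only point requiring a modicum of care is that the statement is claimed over an arbitrary field, so one should avoid dividing by integers (which the argument above does not do) and one should keep in mind — as the remark following Theorem~\ref{ThmMainArith} notes — that in positive characteristic the resulting progression may have repeated terms, but that does not affect the equivalence being proved. The hypothesis $n\ge 3$ is used only to ensure the condition ``$u_j - 2u_{j-1}+u_{j-2}=0$ for $3\le j\le n$'' is non-vacuous enough to pin down the progression; for $n\le 2$ any sequence is trivially an arithmetic progression and the condition is empty, so the restriction $n\ge 3$ is the natural range of interest.
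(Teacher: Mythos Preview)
Your proof is correct and follows essentially the same approach as the paper: a direct verification of the forward implication and an induction for the converse, with the same choice $a=2u_1-u_2$, $b=u_2-u_1$. The additional remarks on characteristic and the role of $n\ge 3$ are fine but not needed for the argument.
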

\begin{proof} Arithmetic progressions satisfy the required equations since one directly checks
$$
(a+jb)-2(a+(j-1)b)+(a+(j-2)b)=0.
$$
Conversely, if the sequence $u_1,...,u_n$ satisfies $u_{j}-2u_{j-1} +u_{j-2}=0$ for each $3\le j\le n$, then inductively one proves that $u_j=a+jb$ with $a=2u_1-u_2$ and $b=u_2-u_1$.
\end{proof}

\subsection{Lang's conjecture after Vojta, Faltings, and R\'emond}  In  \cite{Faltings2,Faltings3}  Faltings proved  Lang's conjecture on rational points in sub-varieties of abelian varieties. Namely, if $L$ is a number field, $A$ is an abelian variety over $L$ and $X\subseteq A$ is a sub-variety defined over $L$, then all but finitely many $L$-rational points of $X$ are contained in the Kawamata locus of $X$; i.e., the union of translates of positive dimensional abelian sub-varieties of $A$ contained in $X$. (We recall that the Kawamata locus is Zariski closed by a theorem of Kawamata \cite{Kawamata}.)
This proof extended ideas of Vojta's proof \cite{VojtaMordell} of Falting's theorem for curves  \cite{Faltings1}. See also Bombieri's simplification \cite{Bombieri} of Vojta's argument.
 
Faltings theorem on sub-varieties of abelian varieties has been extended in several directions. We need a quantitative generalization due to R\'emond \cite{Remond1, Remond2}, which also extends Raynaud's theorem on torsion points \cite{Raynaud} (i.e.\ the Manin-Mumford conjecture).

\begin{theorem}[R\'emond]\label{ThmRemond}
Let $A$ be an abelian variety of dimension $n$ defined over $\Q^{alg}$, and let $\Lcal$ be a symmetric ample invertible sheaf on $A$. There is an effectively computable number $c(A,\Lcal)>0$ such that the following holds: 

Let $X$ be a closed subvariety of $A$ of dimension $m$, and let $\Lambda$ be a subgroup of $A(\Q^{alg})$ such that its rank $r=\dim_\mathbb{Q}(\Lambda\otimes_\mathbb{Z} \mathbb{Q})$ is finite. There is a non-negative integer 
$$
R\leq \left(c(A,\Lcal)\deg_\Lcal X \right)^{(r+1)n^{5(m+1)^2}}
$$
and there exist points $x_1,\ldots,x_R$ in $X(\Q^{alg})\cap\Lambda$ and abelian subvarieties $T_1,\ldots,T_R$ of $A$ satisfying that $x_i+T_i\subseteq X$ for each $1\leq i\leq R$, and 
$$
X(\Q^{alg})\cap \Lambda=\bigcup_{i=1}^R(x_i+T_i)(\Q^{alg})\cap\Lambda.
$$
\end{theorem}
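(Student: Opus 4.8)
The plan is to follow the arc of Vojta's proof of the Mordell conjecture, as extended by Faltings to subvarieties of abelian varieties and simplified by Bombieri, but keeping every constant explicit; the content is quantitative, the qualitative statement being a theorem of Faltings (extended to the finite-rank case, and including Raynaud's theorem when $r=0$). \emph{Reduction.} Since $X$, $A$, $\Lcal$ and any prescribed finite set of points are defined over a number field, and $\Lambda$ has finite rank $r$, one first reduces to the case where $\Lambda$ is the division hull $\Lambda_{\mathrm{div}}=\{x\in A(\Q^{alg}): mx\in\Lambda_0\text{ for some }m\ge1\}$ of a fixed finitely generated subgroup $\Lambda_0$ of rank $r$. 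Next let $Z=Z(X)$ be the Kawamata locus, the union of all positive-dimensional translated abelian subvarieties of $A$ contained in $X$; it is Zariski closed by Kawamata's theorem, and when $X$ is not itself a translate of an abelian subvariety every irreducible component of $Z$ has dimension $<\dim X$. Faltings' theorem gives $\#\bigl(X(\Q^{alg})\cap\Lambda\setminus Z\bigr)<\infty$; the task is to bound this cardinality explicitly and then recurse on $m=\dim X$ by applying the same bound to the components of $Z$. For the recursion to close one needs the next two stages.

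\emph{Controlling the Kawamata locus.} One must bound the number of irreducible components of $Z$ and each of their $\Lcal$-degrees by an explicit function of $\deg_\Lcal X$ and $n$. This is a geometric matter: a translate $y+B\subseteq X$ forces $B$ into the stabilizer group scheme of $X$ under the translation action of $A$, whose identity component and group of components are controlled intersection-theoretically; passing to the quotients $A\to A/B$, using degree bounds from the projection formula, and bounding the number of maximal such $B$ of each type (following R\'emond, via Zhang's inequality for the successive minima of $X$) yields the required estimates with an exponent polynomial in $n$. Iterating this control through the dimension recursion of the first stage is what produces the super-exponential shape $n^{5(m+1)^2}$ of the exponent in the statement.

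\emph{The quantitative Vojta inequality.} This is the technical core. Write $\hat h=\hat h_\Lcal$ for the N\'eron--Tate height of the symmetric ample $\Lcal$ and $\langle\cdot,\cdot\rangle$ for the associated quadratic form on $V=\Lambda\otimes\R$. The assertion to be proved is: there are explicit constants $c_1>1$ and $\epsilon\in(0,1)$, depending on $n$, $m$ and $\deg_\Lcal X$, such that there is no pair of points $x,y\in X(\Q^{alg})\cap\Lambda$ with $x,y\notin Z$, with $\hat h(x)\ge c_1$ and $\hat h(y)\ge c_1\hat h(x)$, and making a small angle in the sense $\langle x,y\rangle\ge(1-\epsilon)\sqrt{\hat h(x)\,\hat h(y)}$. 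To prove it, on a suitable power $A^\ell$ one builds, by a Siegel-lemma argument resting on an arithmetic Hilbert--Samuel estimate for $X$ (an arithmetic B\'ezout-type bound for the height and Hilbert function of $X$), a non-zero global section of a line sheaf of carefully chosen multidegree vanishing to prescribed index at $(x,y,\dots)$ along the relevant subvariety; Faltings' product theorem (the higher-dimensional Dyson lemma) then forces a non-trivial lower bound for its index at a point near $(x,y)$, which contradicts the upper bound coming from the small arithmetic height of the section. The hypothesis $x,y\notin Z$ enters through a positivity statement for the constructed section on $X$ away from $Z$, which fails exactly along $Z$.

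\emph{Packing, the gap principle, and conclusion.} With the Vojta inequality available, cover the unit sphere of $V\cong\R^r$ by $N(\epsilon)\le(c/\epsilon)^{r}$ cones, each of angular radius with cosine $>1-\epsilon$. Within a single cone, Mumford's elementary gap principle (which again uses $x\notin Z$) gives geometric growth $\hat h(x_{i+1})\ge c_0\hat h(x_i)$ of the heights along the points of $X\setminus Z$ in that cone ordered by increasing height, while the Vojta inequality forbids $\hat h(x_j)\ge c_1\hat h(x_i)$ for $i<j$; hence each cone contains at most $1+\log_{c_0}c_1$ points of $X\setminus Z$ with $\hat h\ge c_1$. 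The remaining points of $X\setminus Z$ --- those with $\hat h<c_1$, including torsion points, so that Raynaud's theorem is subsumed --- are explicitly bounded in number using Zhang's theorem on the essential minimum of $X$ (an effective Bogomolov-type lower bound for the height). Summing $N(\epsilon)(1+\log_{c_0}c_1)$ and this bounded-height count, and optimizing $\ell$, $\epsilon$, $c_0$, $c_1$ as functions of $r$, $n$, $m$ and $\deg_\Lcal X$, gives a bound of the shape $(c(A,\Lcal)\deg_\Lcal X)^{(r+1)n^{5(m+1)^2}}$ for $\#(X(\Q^{alg})\cap\Lambda\setminus Z)$; feeding this into the recursion on $m$, with the degree control of the second stage keeping the exponent at the stated value, assembles the finite list $x_1+T_1,\dots,x_R+T_R$. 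The main obstacle is the third stage: producing the quantitative Vojta inequality with the correct exponent requires explicit arithmetic B\'ezout estimates for $X$ and an explicit product theorem, and these must mesh with the geometric bounds on $Z$ so that the dimension induction does not inflate the exponent past $n^{5(m+1)^2}$.
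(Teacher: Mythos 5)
The paper does not actually prove this statement: it is Rémond's theorem, quoted verbatim from Théorème 1.2 of \cite{Remond1}, with the effectivity of $c(A,\Lcal)$ taken from Théorème 2.1 there and, in the principally polarized case used later, from the explicit formula of Théorème 1.3 of \cite{Remond2} in terms of the theta height. So the paper's ``proof'' is a citation, and your attempt has to be measured against Rémond's own argument. Your outline does reproduce the architecture of that argument correctly: reduction to the division hull of a finitely generated group, an explicit Vojta-type inequality built from a Siegel-lemma construction with arithmetic Hilbert--Samuel input and Faltings' product theorem, an explicit Mumford-type gap principle, a cone-packing count in $\Lambda\otimes\R$, a separate count of points of small height (subsuming Raynaud), and a dimension induction through the Kawamata locus with degree control.

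As a proof, however, there is a genuine gap: every quantitative ingredient is asserted rather than established, and those ingredients are the entire content of the theorem. The exponent $(r+1)n^{5(m+1)^2}$ is not something one can ``optimize into'' existence at the end; it comes out of the explicit constants in Rémond's Vojta and Mumford inequalities and out of explicit bounds on the number and $\Lcal$-degrees of the components of the special locus, none of which your sketch derives. Two specific inaccuracies should also be flagged. First, the claim that every component of the Kawamata locus $Z$ has dimension $<\dim X$ unless $X$ is a translate of an abelian subvariety is false: for $X=C+B$ with $C$ a curve of genus at least $2$ and $B$ a positive-dimensional abelian subvariety one has $Z=X$; the correct dichotomy is whether the stabilizer of $X$ is finite, and when it is not, the induction must pass to the quotient by the stabilizer, as Rémond does. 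Second, the count of points of bounded height outside $Z$ does not follow from Zhang's inequality on successive minima alone; Rémond relies on the effective Bogomolov-type lower bounds of David and Philippon for normalized heights of points outside the special locus. If your intention is to use the theorem as this paper does, the correct move is simply to cite \cite{Remond1,Remond2}; if it is to reprove it, the quantitative Vojta inequality (with explicit arithmetic Bézout and product-theorem constants) and the explicit control of $Z$ must actually be carried out, and that is where all the difficulty lies.
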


This formulation is the same as Th\'eor\`eme 1.2 in \cite{Remond1}, with the additional remark that the number $c(A,\Lcal)$ can be effectively computed. In fact, this point is explained in \emph{loc.\ cit.} after the statement of Th\'eor\`eme 1.2, and the precise details are given in Th\'eor\`eme 2.1 and the paragraph after it.

Moreover, a simple closed formula for $c(A, \Lcal)$  is given in Th\'eor\`eme 1.3 of \cite{Remond2} using the notion of \emph{theta height} of $A$, under the assumption that $\Lcal$ induces a principal polarization.  See Section \ref{SecPazuki} for details on how to use these explicit effective estimates in our context. 

\subsection{Proof of Theorem \ref{ThmMainArith}} \label{SecPfMainArith} Let us keep the notation and assumptions of Theorem \ref{ThmMainArith}. Let us consider constructions from Section \ref{SecGeom} with $k=\Q^{alg}$, $n=10d^2-4d+2$, and the choice of $E$ and $g$ given in Theorem \ref{ThmMainArith}. Especially, we obtain the morphism $G_n:E^n\to (\Pro^1_k)^n$, the projective surfaces $H_n\subseteq (\Pro^1_k)^n$ and $X_n\subseteq E^n$, the open sets $U_n\subseteq H_n$ and $V_n\subseteq X_n$, and the line sheaf $\Lcal_n$ on $E^n$.

Let $\Delta_n=\{u_1=u_2=...=u_n\}\subseteq \A^n_k$ be the diagonal line. We observe that $\Delta_n$ is a Zariski closed set in $U_n$. Let us define
$$
U_n^0=U_n- \Delta_n\quad \mbox{ and }\quad V_n^0=G^{-1}_n(U_n^0)\subseteq V_n.
$$
\begin{lemma} \label{LemmaNonTrivLocus} Let $L/k$ be a field extension. Let $\alpha_1,...,\alpha_n\in L$. We have that the sequence is an arithmetic progression in $L$ if and only if $(\alpha_1,...,\alpha_n)\in U_n(L)$. In this case, the arithmetic progression is non-trivial if and only if $(\alpha_1,...,\alpha_n)\in U_n^0(L)$.

Furthermore, let $P_1,...,P_n$ be a sequence of points in $E(L)$. We have that $g(P_1),...,g(P_n)$ is an arithmetic progression in $L=\A^1_k(L)$ if and only if $(P_1,...,P_n)\in V_n(L)$. In this case, the arithmetic progression is non-trivial if and only if  $(P_1,...,P_n)\in V_n^0(L)$.
\end{lemma}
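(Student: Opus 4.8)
The plan is to reduce the whole statement to the purely algebraic characterization of arithmetic progressions in Lemma~\ref{LemmaAP2ndDiff}, combined with the defining equations of $U_n$ and the relations $V_n=G_n^{-1}(U_n)$, $V_n^0=G_n^{-1}(U_n^0)$ from Section~\ref{SecGeom} and the present section.

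First I would handle the claim about the $\alpha_j$. Since $U_n\subseteq\A^n_k$, a tuple $(\alpha_1,\dots,\alpha_n)\in L^n$ always lies in $\A^n_k(L)$, so no extra condition arises from passing to the projective ambient space. Recalling that $n\ge 3$ and that $U_n$ is cut out inside $\A^n_k$ exactly by the equations $u_j-2u_{j-1}+u_{j-2}=0$ for $3\le j\le n$, Lemma~\ref{LemmaAP2ndDiff} says precisely that $(\alpha_1,\dots,\alpha_n)\in U_n(L)$ if and only if $\alpha_1,\dots,\alpha_n$ is an arithmetic progression in $L$. For the refinement, by definition the progression is trivial exactly when all $\alpha_j$ coincide, which is precisely the condition $(\alpha_1,\dots,\alpha_n)\in\Delta_n(L)$. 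As already observed, $\Delta_n$ is Zariski closed in $U_n$ and $U_n^0=U_n-\Delta_n$ is its open complement, so $U_n^0(L)=U_n(L)\setminus\Delta_n(L)$; hence the progression is non-trivial if and only if $(\alpha_1,\dots,\alpha_n)\in U_n^0(L)$.

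Next I would deduce the statement about the points $P_j$ from the fibre-product description $V_n=G_n^{-1}(U_n)$. By construction $G_n(P_1,\dots,P_n)=(g(P_1),\dots,g(P_n))$ as a point of $(\Pro^1_k)^n(L)$, and on $L$-points the universal property of $G_n^{-1}(U_n)$ gives that $(P_1,\dots,P_n)\in V_n(L)$ if and only if $(g(P_1),\dots,g(P_n))\in U_n(L)$; in particular this automatically forces each $g(P_j)$ to lie in $\A^1_k(L)=L$. Applying the first part with $\alpha_j=g(P_j)$ then shows that $(P_1,\dots,P_n)\in V_n(L)$ if and only if $g(P_1),\dots,g(P_n)$ is an arithmetic progression in $L$. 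Since likewise $V_n^0=G_n^{-1}(U_n^0)$ and preimages commute with taking complements, the same computation with $U_n^0$ in place of $U_n$ yields that, when it is an arithmetic progression, the sequence $g(P_1),\dots,g(P_n)$ is non-trivial if and only if $(P_1,\dots,P_n)\in V_n^0(L)$.

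There is no genuinely difficult step here: the lemma is essentially a dictionary translating Lemma~\ref{LemmaAP2ndDiff} into the geometry of Section~\ref{SecGeom}. The points that require a little care are keeping track of affine versus projective coordinates --- i.e.\ that membership in $U_n\subseteq\A^n_k$ rules out the value $\infty$, so that being an arithmetic progression in $L=\A^1_k(L)$ is exactly the condition appearing on the right-hand side --- and the elementary identities $U_n^0(L)=U_n(L)\setminus\Delta_n(L)$ and $V_n^0(L)=V_n(L)\setminus G_n^{-1}(\Delta_n)(L)$, which hold because $\Delta_n$ is closed in $U_n$ and preimage commutes with set difference.
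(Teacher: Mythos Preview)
Your proof is correct and follows essentially the same approach as the paper: both reduce the first claim to Lemma~\ref{LemmaAP2ndDiff} via the defining equations of $U_n$ and the definition of $\Delta_n$, and then deduce the second claim from the relations $V_n=G_n^{-1}(U_n)$ and $V_n^0=G_n^{-1}(U_n^0)$. Your version is simply more explicit about the affine-versus-projective bookkeeping and the passage to $L$-points, which the paper leaves implicit.
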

\begin{proof} The sequence $\alpha_1,...,\alpha_n$ is an arithmetic progression if and only if it has second differences equal to $0$ (cf. Lemma \ref{LemmaAP2ndDiff}). This is equivalent to the condition that $(\alpha_1,...,\alpha_n)\in U_n(L)$.  The sequence is trivial if and only if all terms are equal, which is equivalent to $(\alpha_1,...,\alpha_n)\in \Delta_n(L)$.

The second part follows from the first part, using  $V_n=G^{-1}_n(U_n)$ and $V^0_n=G^{-1}_n(U^0_n)$.
\end{proof}
Note that $V_n^0$ is a non-empty open set of $V_n$, thus, of $X_n$. Let $Z_n=X_n-V_n^0$; this is a proper Zariski closed subset of $X_n$. We now show that the Kawamata locus of $X_n$ is contained in $Z_n$.

\begin{lemma}\label{LemmaKawamataLocus} Let $T\subseteq E^n$ be an  abelian sub-variety of strictly positive dimension, and suppose that $x\in X_n(k)$ satisfies $x+T\subseteq X_n$. Then $x+T\subseteq Z_n$.
\end{lemma}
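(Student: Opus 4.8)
The plan is to argue by contradiction, using Theorem \ref{ThmMainHolo} together with the fact that $n=10d^2-4d+2$ was chosen so that $n-1$ strictly exceeds the bound $10d^2-4d$ appearing in that theorem. So suppose, for contradiction, that $x+T\not\subseteq Z_n$; equivalently, $(x+T)\cap V_n^0\ne\emptyset$. After extending scalars to $\C$, I would use that $T$ has positive dimension to choose a non-constant one-parameter subgroup $\psi\colon\C\to T(\C)$ whose image is Zariski-dense in $T$ (this is standard: a complex line through the origin of $\mathrm{Lie}(T)$ with generic direction works, since there are only countably many proper abelian subvarieties of $T$). Set $\Phi=x+\psi\colon\C\to E^n(\C)$, a non-constant holomorphic map with image Zariski-dense in $x+T$, and write $\phi_j=\pi_j\circ\Phi\colon\C\to E$ and $f_j=g\circ\phi_j$. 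Since $\Phi(\C)\subseteq x+T\subseteq X_n$ and $(x+T)\cap V_n^0$ is a nonempty Zariski-open subset of the irreducible variety $x+T$, it meets the dense set $\Phi(\C)$; hence $\Omega:=\Phi^{-1}(V_n^0)$ is a nonempty open subset of $\C$.

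Next I would verify that the $f_j$ are genuine meromorphic functions on $\C$ forming an arithmetic progression identically. On $\Omega$ the composite $G_n\circ\Phi=(f_1,\dots,f_n)$ takes values in $U_n^0\subseteq\A^n_\C$, so every $f_j$ is finite there; moreover, for each $j$ with $\phi_j$ non-constant the image $\phi_j(\C)$ is infinite, hence Zariski-dense in $E$, hence not contained in the finite pole set of $g$, so $f_j\in\Mcal$, while for $j$ with $\phi_j$ constant $f_j$ is a constant, which is finite by the preceding remark. As $(f_1,\dots,f_n)$ lands in $U_n$ on the nonempty open set $\Omega$, the relations $f_\ell-2f_{\ell-1}+f_{\ell-2}=0$ hold on $\Omega$, hence identically on $\C$ (two meromorphic functions agreeing on an open set are equal); therefore $f_j=F_1+jF_2$ with $F_1=2f_1-f_2\in\Mcal$ and $F_2=f_2-f_1\in\Mcal$. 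Finally, since $U_n^0=U_n-\Delta_n$, on $\Omega$ we have $f_1\ne f_2$, so $F_2$ is not the zero function.

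The only genuine obstacle is the possibility that one of the $\phi_j$ is constant, which would remove $f_j$ from the collection to which Theorem \ref{ThmMainHolo} applies. The crucial point is that this can happen for at most one index: if $\phi_{j_0}$ is constant, say $f_{j_0}\equiv c$, then $c=F_1+j_0F_2$ forces $F_1=c-j_0F_2$, whence $f_\ell=c+(\ell-j_0)F_2$, and since $F_2\not\equiv0$ this is non-constant for every $\ell\ne j_0$; thus $\phi_\ell$ is non-constant for all $\ell\ne j_0$. Consequently $S:=\{\,j:\phi_j\text{ is non-constant}\,\}$ satisfies $|S|\ge n-1$.

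To finish, I would apply Theorem \ref{ThmMainHolo} to the elliptic curve $E$, the degree-$d$ morphism $g$, the integer $M=|S|\ge n-1\ge2$, the non-constant holomorphic maps $(\phi_j)_{j\in S}$, the meromorphic functions $f_j=g\circ\phi_j$, the functions $F_1,F_2\in\Mcal$ with $F_2$ not the zero function, and the pairwise distinct complex numbers $a_j=j$ for $j\in S$, for which $f_j=F_1+a_jF_2$. The theorem yields $|S|\le10d^2-4d$, contradicting $|S|\ge n-1=10d^2-4d+1$. Hence $x+T\subseteq Z_n$. The remaining work is routine: producing the analytic one-parameter subgroup with Zariski-dense image and the bookkeeping showing $f_j=g\circ\phi_j\in\Mcal$; the substance of the argument is entirely in the choice $n=10d^2-4d+2$ combined with the observation that at most one coordinate projection of $x+T$ can be constant.
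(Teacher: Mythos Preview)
Your proof is correct and follows essentially the same route as the paper's: produce a non-constant holomorphic map $\C\to x+T$ with Zariski-dense image, deduce that the coordinate functions $f_j=g\circ\phi_j$ form a non-trivial arithmetic progression in $\Mcal$, observe that at most one can be constant, and then contradict Theorem~\ref{ThmMainHolo} via $n-1=10d^2-4d+1>10d^2-4d$. One small point to tighten: your claim that $f_\ell=c+(\ell-j_0)F_2$ is non-constant for $\ell\ne j_0$ requires $F_2$ to be non-constant, not merely $F_2\not\equiv 0$; but if $F_2$ were a nonzero constant then every $f_\ell$, hence every $\phi_\ell$, hence $\Phi$, would be constant, which you have already ruled out.
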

\begin{proof} Let $T'=x+T$. Since $T'(\C)$ is a positive dimensional complex torus, there is a non-constant holomorphic map $\phi:\C \to X_n$ whose image is Zariski dense in $T'$; this can be seen by considering $T'\simeq \C^g/\Lambda$ for a lattice $\Lambda\subseteq \C^g$.  Let us write $\phi_j=\pi_j\circ \phi:\C\to E$, so that $\phi=(\phi_1,...,\phi_n)$.

By contradiction, suppose that $T'$ is not contained in $Z_n$. Then the image of $\phi$ meets $V_n^0$. Thus, the image of $G_n\circ \phi$ meets $U_n^0$. In particular, all the compositions $f_j=g\circ \phi_j$ are complex meromorphic functions (i.e.\ $\phi_j$ is not identically a pole of $g$, for each $j$).

Since the image of $G_n\circ \phi=(f_1,...,f_n)$ meets the Zariski open set $U_n^0\subseteq (\Pro^1_k)^n$, we see that for all but countably many $z_0\in\C$ we have $(f_1(z_0),...,f_n(z_0))\in U_n^0(\C)$. Thus, by the identity principle, we get that $(f_1,...,f_n)\in\Mcal^n$ satisfies the equations defining $U_n$ but not the equations defining $\Delta_n$. This means that $(f_1,...,f_n)\in U_n^0(\Mcal)$ as an $\Mcal$-rational point. By Lemma \ref{LemmaNonTrivLocus} with $L=\Mcal$, we get that $f_1,...,f_n$ is a non-trivial arithmetic progression in $\Mcal$. Hence, there are $F_1,F_2\in \Mcal$ such that $F_2$ is not the zero function and $f_j=F_1+jF_2$ for each $1\le j\le n$.

As $\phi$ is non-constant and $g$ is finite, at least one of the $f_j$ is non-constant.  Thus, at least one of $F_1$ or $F_2$ is non-constant, and it follows that at most one of the $f_1,...,f_n$ can be constant. Relabeling if necessary and deleting one term, we apply Theorem \ref{ThmMainHolo} with $M=n-1$ to conclude that $M\le 10d^2-4d$. Since  $n=10d^2-4d+2$ we get $10d^2-4d+1\le 10d^2-4d$, a contradiction.
\end{proof}
Finally, we proceed to conclude the proof of Theorem \ref{ThmMainArith}.

We apply Theorem \ref{ThmRemond} with $A=E^n$ and $\Lcal=\Lcal_n$; this choice of sheaf is allowed by Lemma \ref{LemmaAmpleSym}. We obtain the effectively computable constant $c(E^n,\Lcal_n)>0$ provided by Theorem \ref{ThmRemond}. This constant only depends on the isomorphism class of $E$ over $k=\Q^{alg}$ and our choice $n=10d^2-4d+2$. So, $c(E^n,\Lcal_n)$ only depends on $d$ and $j_0$, the $j$-invariant of $E$, and this dependence is effective.  Let us write $c(j_0,d)$ instead of $c(E^n,\Lcal_n)$ to make explicit that this quantity only depends on $j_0$ and $d$.

We take $X=X_n$, which has dimension $m=2$ (cf.\ Lemma \ref{LemmaSurfacesUp}). Let us consider the group $\Lambda = \Gamma\times\cdots\times  \Gamma\subseteq E^n(k)$ with $\Gamma$ as in Theorem \ref{ThmMainArith} and observe that $\Lambda$ has finite rank 
$$
r=\rk \Lambda=n\cdot \rk\Gamma.
$$
By Lemma \ref{LemmaDegXn}, the number $R$ provided by Theorem \ref{ThmRemond} satisfies
\begin{equation}\label{EqnRbound}
R \le (c(j_0,d)\deg_{\Lcal_n} X_n)^{n^{45}(r+1)}\leq R_0:= \left(c(j_0,d) \cdot (n^2-n)d^{2n-2}\right)^{n^{45}(1 + n\cdot\rk \Gamma)}.
\end{equation}
In addition, there are points $x_1,...,x_R\in X_n(k)$ and abelian sub-varieties $T_1,...,T_R\subseteq E^n$ such that 
$$
V_n^0(k)\cap \Lambda \subseteq X_n(k)\cap \Lambda \subseteq \bigcup_{i=1}^R (x_i+T_i)(k).
$$
By Lemma \ref{LemmaKawamataLocus}, all the $T_i$ with $\dim T_i\ge 1$ satisfy $x_i+T_i\subseteq Z_n$. Thus, writing 
$$
I=\{1\le i\le R : T_i=\{e_{A}\}\}
$$ 
we get
$$
V_n^0(k)\cap \Lambda \subseteq \bigcup_{i\in I} (x_i+T_i)(k) = \{x_i : i\in I\}.
$$
In particular, 
\begin{equation}\label{EqnPtsbound}
\# \left(V_n^0(k)\cap \Lambda\right) \le R.
\end{equation}
Let us define 
\begin{equation}\label{EqnCj0d}
C(j_0,d)=\left(c(j_0,d) \cdot (n^2-n)d^{2n-2}\right)^{n^{46}}\quad \mbox{with}\quad n=10d^2-4d+2.
\end{equation}
By \eqref{EqnRbound} and \eqref{EqnPtsbound}, we see that
\begin{equation}\label{EqnKeyBoundArith}
C(j_0,d)^{1+\rk \Gamma}=R_0\left(c(j_0,d) \cdot (n^2-n)d^{2n-2}\right)^{n^{46}-n^{45}} > 2R_0 > n+R_0\geq n + \# \left(V_n^0(k)\cap \Lambda\right).
\end{equation}
Let $P_1,...,P_N\in \Gamma$ be as in the statement of Theorem \ref{ThmMainArith}, i.e, $g(P_1),...,g(P_N)$ is a non-trivial arithmetic progression in $k$. We note that for each $j=1,...,N-n$ the sequence $g(P_{j}),g(P_{j+1}),..., g(P_{j+n-1})$ is a non-trivial arithmetic progression in $k$ of length $n$, and all these $N-n$ sequences are different. From Lemma \ref{LemmaNonTrivLocus} we deduce that $(P_j,...,P_{j+n-1})\in V_n^0(k)$ for each $j=1,...,N-n$ and these are different points as their images under $G_n$ are different. Furthermore, by assumption $P_i\in \Gamma$ for each $i$, so we get $(P_j,...,P_{j+n-1})\in V_n^0(k)\cap \Lambda$. This proves
$
\# \left(V_n^0(k)\cap \Lambda\right)\ge N-n.
$

Together with \eqref{EqnKeyBoundArith} we finally get 
$$
C(j_0,d)^{1+\rk \Gamma}> N.
$$
This proves Theorem \ref{ThmMainArith} with 
\begin{equation}\label{Eqnkj0d}
\kappa(j_0,d) = 1/\log C(j_0,d). 
\end{equation}
Since $c(j_0,d)$ is effectively computable, so are $C(j_0,d)$ and $\kappa(j_0,d)$. \qed

\subsection{Consequences}\label{SecConsequences}

Let us formulate here two direct consequences of Theorem \ref{ThmMainArith} that relate arithmetic progressions of rational points to two different aspects of the arithmetic of elliptic curves: the \emph{rank}, and on the other hand, the \emph{torsion} part.

\begin{corollary}\label{Coro1} Let $j_0\in \Q^{alg}$ and let $d$ be a positive integer. There is an effectively computable constant $\kappa(j_0,d)>0$ depending only on $j_0$ and $d$ such that the following holds:

Let $L\subseteq \Q^{alg}$ be a number field containing $j_0$ and let $E$ be an elliptic curve over $L$ with $j$-invariant equal to $j_0$. Let $g$ be a non-constant rational function on $E$ defined over $L$ of degree $d$. If for some $N$ there is a sequence of points $P_1,...,P_N\in E(L)$ such that $g(P_1),...,g(P_N)$ is a non-trivial arithmetic progression in $L$, then
$$
1+\rk E(L)> \kappa(j_0,d)\cdot \log N
$$
\end{corollary}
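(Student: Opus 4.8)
The plan is to deduce Corollary \ref{Coro1} directly from Theorem \ref{ThmMainArith} by base change, using the Mordell--Weil theorem to supply the finite-rank hypothesis. There is essentially no new content here; the work is a routine reduction, and I expect the only points requiring a word of care to be the degenerate value $d=1$ and the invariance of ``non-triviality'' under a field extension.

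First I would dispose of the case $d=1$: since $E$ has genus $1$, a non-constant rational function on $E$ has degree at least $2$, because a degree-$1$ morphism $E\to\Pro^1$ would be an isomorphism, which is impossible. Hence if $d=1$ there is no function $g$ as in the statement and there is nothing to prove, while if $d\ge 2$ we are in the range where Theorem \ref{ThmMainArith} applies.

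Next I would view $E$ as an elliptic curve over $\Q^{alg}$ by base change from $L$ (this is legitimate since $L\subseteq\Q^{alg}$ and $j_0\in L$); its $j$-invariant is still $j_0$, and $g$ becomes a non-constant rational function on $E_{/\Q^{alg}}$ of the same degree $d$, defined over $\Q^{alg}$. By the Mordell--Weil theorem the group $E(L)$ is finitely generated, so $\Gamma:=E(L)\subseteq E(\Q^{alg})$ has finite rank with $\rk\Gamma=\rk E(L)$. The given points $P_1,\dots,P_N$ lie in $\Gamma$; moreover $g(P_j)\in L\subseteq\Q^{alg}$ for each $j$, so no $P_j$ is a pole of $g$, and $g(P_1),\dots,g(P_N)$ is an arithmetic progression in $\Q^{alg}$ which remains non-trivial there, since in characteristic zero non-triviality just means that the common difference is non-zero, a condition unaffected by enlarging the field. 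Thus all hypotheses of Theorem \ref{ThmMainArith} are satisfied with this choice of $E$, $g$, $\Gamma$ and $N$.

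Finally, applying Theorem \ref{ThmMainArith} with the same effectively computable constant $\kappa(j_0,d)$ produced there yields $1+\rk\Gamma>\kappa(j_0,d)\cdot\log N$, and substituting $\rk\Gamma=\rk E(L)$ gives the asserted inequality $1+\rk E(L)>\kappa(j_0,d)\cdot\log N$. Effectivity and the dependence on only $j_0$ and $d$ are inherited verbatim from Theorem \ref{ThmMainArith}, so no separate analysis is needed; the main (and essentially only) obstacle, namely producing the effective constant, has already been overcome in the proof of Theorem \ref{ThmMainArith}.
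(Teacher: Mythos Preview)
Your proof is correct and follows essentially the same route as the paper: base change $E$ to $\Q^{alg}$, take $\Gamma=E(L)$ (finite rank by Mordell--Weil), and invoke Theorem~\ref{ThmMainArith}. You add a little more care than the paper does---disposing of the vacuous case $d=1$ and checking that non-triviality and the non-pole condition survive the field extension---but the argument is the same.
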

\begin{proof} All elliptic curves over number fields with $j$-invariant equal to $j_0$ are isomorphic to each other after base change to $\Q^{alg}$.  Thus, the result is immediate from Theorem \ref{ThmMainArith} applied to $E'=E\otimes_L \Q^{alg}$ choosing  the group $\Gamma=E(L)$, which is a group of finite rank by the Mordell-Weil theorem. Here, we use the inclusion $\Gamma \subseteq E(\Q^{alg})\simeq E'(\Q^{alg})$. 
\end{proof}

\begin{corollary} \label{Coro2} Let $E$ be an elliptic curve over $\Q^{alg}$ and let $d$ be a positive integer. There is an effectively computable constant $N(E,d)$ depending only on $E$ and $d$ such that the following holds:

Let $g$ be a non-constant rational function on $E$ defined over $\Q^{alg}$ of degree $d$. Let $S_g\subseteq E(\Q^{alg})$ be the set of poles of $g$ and let $E(\Q^{alg})_{tor}$ be the group of all torsion points of $E$. The set 
$$
g\left(E(\Q^{alg})_{tor}-S_g\right)\subseteq \Q^{alg}
$$ 
does not contain non-trivial arithmetic progressions of length greater than $N(E,d)$.
\end{corollary}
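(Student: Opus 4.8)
The plan is to apply Theorem~\ref{ThmMainArith} with the finite-rank subgroup $\Gamma = E(\Q^{alg})_{tor}$. Since $\Gamma$ is a torsion group, $\rk \Gamma = 0$, which is in particular finite, so the hypotheses of Theorem~\ref{ThmMainArith} are met with $j_0 = j(E)$ (note $j_0 \in \Q^{alg}$ because $E$ is defined over $\Q^{alg}$). This is exactly the situation that forces us to use Theorem~\ref{ThmMainArith} rather than Corollary~\ref{Coro1}: the full torsion subgroup $E(\Q^{alg})_{tor}$ is not contained in $E(L)$ for any single number field $L$, whereas it does have rank $0$. We may assume $d \geq 2$, since a non-constant rational function on a curve of genus $1$ cannot have degree $1$ (a degree-$1$ morphism of smooth projective curves is an isomorphism, and $E \not\simeq \Pro^1$); for $d = 1$ there is no $g$ as in the statement and the claim is vacuous.

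Now suppose $g \in \Q^{alg}(E)$ is non-constant of degree $d \geq 2$ and that $c_1, \dots, c_N \in g\!\left(E(\Q^{alg})_{tor} - S_g\right)$ is a non-trivial arithmetic progression. Since $\Q^{alg}$ has characteristic $0$, a non-trivial arithmetic progression has pairwise distinct terms, so we may pick torsion points $P_1, \dots, P_N \in E(\Q^{alg})_{tor}$, none a pole of $g$, with $g(P_j) = c_j$ for each $j$. These lie in $\Gamma$ and $g(P_1), \dots, g(P_N)$ is a non-trivial arithmetic progression in $\Q^{alg}$, so Theorem~\ref{ThmMainArith} gives $1 = 1 + \rk \Gamma > \kappa(j_0, d) \cdot \log N$, that is, $\log N < 1/\kappa(j_0, d)$.

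It remains to package this as an effective bound. Recalling from \eqref{Eqnkj0d} that $\kappa(j_0, d) = 1/\log C(j_0, d)$, the inequality above reads $N < C(j_0, d)$, so it suffices to set $N(E, d) = \lceil C(j(E), d) \rceil$ (or simply $N(E,d) = C(j(E),d)$); this is effectively computable since $C(j_0,d)$ is, and it depends only on $j(E)$ --- hence only on $E$ --- and on $d$. There is essentially no obstacle beyond what was already overcome in the proof of Theorem~\ref{ThmMainArith}: the only two points requiring care are the genus-$1$ remark that disposes of $d = 1$, and the observation that in characteristic $0$ the terms of a non-trivial progression are distinct, so that the progression genuinely arises from $N$ (distinct) torsion points of $E$. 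Finally, specializing $g$ to the $x$-coordinate ($d = 2$) and to the $y$-coordinate ($d = 3$) of a Weierstrass equation for $E$ recovers Theorem~\ref{ThmTorIntro}.
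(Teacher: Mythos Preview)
Your proof is correct and follows essentially the same approach as the paper's: apply Theorem~\ref{ThmMainArith} with $\Gamma = E(\Q^{alg})_{tor}$, which has rank $0$, and use that the isomorphism class of $E$ over $\Q^{alg}$ is determined by $j(E)$. You have simply filled in details that the paper's two-sentence proof leaves implicit, in particular the vacuous $d=1$ case (needed because Theorem~\ref{ThmMainArith} assumes $d\ge 2$) and the explicit form $N(E,d)=\lceil C(j(E),d)\rceil$ of the effective bound.
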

\begin{proof} The group $\Gamma=E(\Q^{alg})_{tor}$ has rank $0$ and the isomorphism class of $E$ over $\Q^{alg}$ is determined by the $j$-invariant. Thus, the result is a direct consequence of Theorem \ref{ThmMainArith}.
\end{proof}

As a special case, we obtain two of the main results stated in the Introduction.
\begin{proof}[Proof of Theorem \ref{ThmIntro}] The result follows from Corollary \ref{Coro1} with $d=2$ for $x$-coordinates, $d=3$ for $y$-coordinates, and choosing $L=\Q$.
\end{proof}
\begin{proof}[Proof of Theorem \ref{ThmTorIntro}] The result follows from Corollary \ref{Coro2} with $d=2$ for $x$-coordinates and $d=3$ for $y$-coordinates.
\end{proof}
\subsection{Effectivity}\label{SecPazuki} Here we prove that \eqref{Eqncj0} gives an admissible value for $c(j_0)$ in Theorem \ref{ThmIntro}. Although we restrict ourselves to the setting of Theorem \ref{ThmIntro} for the sake of simplicity, it will be clear from the argument that a similar (although lengthier) computation gives an explicit value for the effective constants in Theorem \ref{ThmMainArith} and its consequences.

Let $j_0\in \Q$. From the proof of Theorem \ref{ThmIntro} (cf. Section \ref{SecConsequences}) we note that  $c(j_0)$ can be chosen as any value smaller than 
$$
 \min\{\kappa(j_0,2), \kappa(j_0,3)\} = \frac{1}{\log \max\{C(j_0,2), C(j_0,3)\}} = \frac{1}{\log C(j_0,3)} 
$$
with $\kappa(j_0,d)$ and $C(j_0,d)$ as defined in  \eqref{EqnCj0d} and \eqref{Eqnkj0d}. Here we used that for $j_0$ fixed, one can check that the quantity $C(j_0,d)$ is increasing on $d$. We compute
\begin{equation}\label{EqnExpl0}
\log C(j_0,3)=80^{46}\left(\log(c(j_0,3))+\log(6320)+158\log(3)\right) <80^{46}\left(\log(c(j_0,3))+183 \right)
\end{equation}
where $c(j_0,3)$ is as in Section \ref{SecPfMainArith}. Namely, $c(j_0,3)=c(E^n,\Lcal_n)$ where $n=10\cdot 3^2-4\cdot 3+2=80$, $E$ is an elliptic curve over $\Q^{alg}$ with $j$-invariant equal to $j_0$,  and $c(A,\Lcal)$ is the constant appearing in Theorem \ref{ThmRemond}.

An admissible value for $c(A,\Lcal)$ is given in  Th\'eor\`eme 1.3 of \cite{Remond2}  under the additional assumption that $\Lcal$ induces a principal polarization. In our case, $\Lcal=\Lcal_n\simeq \bigotimes_{j=1}^n \pi_j^*\Ocal(e_E)$ induces a principal polarization on $E^n$, see \cite{Lange}. Therefore, the formula from \cite{Remond2} directly applies. In our case, the abelian variety $A=E^n$ and the sheaf $\Lcal_n$ can be defined over $\mathbb{Q}$ because $j_0\in\Q$. Therefore, Th\'eor\`eme 1.3 of \cite{Remond2} allows us to take 
\begin{equation}\label{EqnExpl1}
c(j_0,3)=c(E^n,\Lcal_n)=2^{34}\cdot\max\{1,h_\Theta(E^n)\},\quad n=80
\end{equation}  
where $h_\Theta$ denotes the \emph{Theta height} associated to the line sheaf $\Lcal^{\otimes 16}$. Let us estimate the Theta height. First we compare it to the semi-stable Faltings' height $h_F$ using results by Pazuki, namely Corollary 1.3(2) in \cite{Pazuki}. Here we use the normalization of $h_F$ used in \emph{loc. cit.} As we are using the Theta height associated to $\Lcal^{\otimes 16}$, we must choose $r=4$ in \cite{Pazuki} Corollary 1.3(2), which gives
$$
\begin{aligned}
h_\Theta(E^n)&\leq \frac{1}{2}\max\{1,h_F(E^n)\}+C_2(80,4)\log\left(2+\max\{1,h_F(E^n)\}\right)\\
& <\frac{1}{2}\max\{1,h_F(E^n)\}+e^{256}\log(2+\max\{1,h_F(E^n)\}).
\end{aligned}
$$ 
 Since the Faltings height satisfies $h_F(A_1\times A_2)=h_F(A_1)+h_F(A_2)$ (cf. equation (2.7) in \cite{Bost} for instance) and we chose $n=80$, we find
$$
h_\Theta(E^n)<\frac{1}{2}\max\{1,80h_F(E)\}+e^{256}\log(2+\max\{1,80h_F(E)\}).
$$ 
Lemme 7.9 in \cite{GaudRem} (see also \cite{SilvermanE}) gives $h_F(E)\leq h(j_0)/12-0.72< h(j_0)$ where $h(x)=\log H(x)$ for $x\in \Q$ (note that the normalization of the Faltings height in \cite{GaudRem} and \cite{Pazuki} is the same), from which we get
$$
\begin{aligned}
h_\Theta(E^n)&< \frac{1}{2}\max\left\{1,\frac{20}{3}h(j_0)\right\}+e^{256}\log\left(2+\max\left\{1,\frac{20}{3}h(j_0)\right\}\right)\\
&\le\frac{10}{3}(1+h(j_0)) + e^{256}\log\left(\frac{20}{3}(1+h(j_0))\right)\\
&\le \max\left\{ \frac{20}{3}(1+h(j_0)) , e^{257}\log\left(\frac{20}{3}(1+h(j_0))\right) \right\}.
\end{aligned}
$$
From \eqref{EqnExpl1} we get
$$
\begin{aligned}
\log c(j_0,3)&< 34\log 2 + \max\left\{ \log \frac{20}{3}+ \log (1+h(j_0)) , 257 + \log\left( \log \frac{20}{3}+ \log(1+h(j_0))\right)\right\}\\
&< 24 +\max\{ 2 + \log (1+h(j_0)) , 257 + \log\left( 2 + \log(1+h(j_0))\right)\}\\
&= 26 + \max\{  \log (1+h(j_0)) , 255 + \log\left( 2 + \log(1+h(j_0))\right)\} .
\end{aligned}
$$
Finally \eqref{EqnExpl0} gives
$$
\log C(j_0,3)< 80^{46}\left(209 +  \max\{  \log (1+h(j_0)) , 255 + \log\left( 2 + \log(1+h(j_0))\right)\}\right)
$$
which proves \eqref{Eqncj0}. \qed

\section{Bounding the rank and applications}\label{SecApplications}

\subsection{Pointwise rank bounds}\label{SecPtRkBd} Let us recall the following bound for the rank of the quadratic twist of an elliptic curve. A similar result holds over number fields, although to simplify the notation we only state the case of $\Q$. Here we recall that $\omega(D)$ is the number of distinct prime divisors of $D$.
\begin{lemma}\label{LemmaRkBd} Let $E$ be an elliptic curve over $\Q$. There is an effectively computable constant $c(E)$ depending only on $E$ such that for all squarefree integers $D$ we have
$$
\rk E^{(D)}(\Q)\le 12\cdot \omega(D) + c(E).
$$
\end{lemma}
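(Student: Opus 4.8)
The proof is a classical $2$-descent estimate, made uniform in $D$ by the observation that the $2$-torsion field is insensitive to quadratic twisting. Fix a Weierstrass equation $y^2=f(x)$ for $E$ with $f=x^3+ax^2+bx+c\in\Z[x]$ and roots $e_1,e_2,e_3$. Then $E^{(D)}$ admits the model $y^2=x^3+Dax^2+D^2bx+D^3c$, whose $2$-torsion $x$-coordinates are $De_1,De_2,De_3$, so that
$$
K:=\Q\bigl(E^{(D)}[2]\bigr)=\Q(De_1,De_2,De_3)=\Q(e_1,e_2,e_3)=\Q(E[2])
$$
for \emph{every} squarefree $D$. In particular $[K:\Q]\le 6$, and over $K$ each twist $E^{(D)}$ has all of its $2$-torsion rational. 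Since $E^{(D)}(\Q)$ embeds into $E^{(D)}(K)$ and rank is monotone under inclusion of abelian groups, it suffices to bound $\rk E^{(D)}(K)$, and the reason for passing to $K$ is that complete $2$-descent is available there.

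First I would run complete $2$-descent for $E^{(D)}$ over $K$. Let $S=S(D)$ be the finite set of places of $K$ consisting of the archimedean places together with all places lying above a prime in $\{2,3\}\cup\{p:p\mid D\}\cup\{p:E\text{ has bad reduction at }p\}$. Since $E^{(D)}$ has full $K$-rational $2$-torsion and the prime divisors of $D\cdot\mathrm{disc}(f)$ lie below $S$, the classical descent map furnishes an injection
$$
E^{(D)}(K)/2E^{(D)}(K)\ \hookrightarrow\ K(S,2)^2,\qquad K(S,2):=\{t\in K^*/(K^*)^2:\ \ord_v(t)\in 2\Z\ \text{for all }v\notin S\},
$$
whence $\rk E^{(D)}(K)\le 2\dim_{\F_2}K(S,2)$. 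Next I would bound $K(S,2)$ through the exact sequence
$$
1\longrightarrow \mathcal O_{K,S}^*/(\mathcal O_{K,S}^*)^2\longrightarrow K(S,2)\longrightarrow \mathrm{Cl}_S(K)[2]\longrightarrow 1 ,
$$
using Dirichlet's $S$-unit theorem, which gives $\dim_{\F_2}\mathcal O_{K,S}^*/(\mathcal O_{K,S}^*)^2=\#S$, together with the fact that $\mathrm{Cl}_S(K)$ is a quotient of $\mathrm{Cl}(K)$; this yields $\dim_{\F_2}K(S,2)\le \#S+\dim_{\F_2}\mathrm{Cl}(K)[2]$.

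It then remains to count the places in $S$. For an odd prime $p$ with $p\nmid D$ at which $E$ has good reduction, the quadratic twisting character is unramified at $p$, so $E^{(D)}$ also has good reduction at $p$ by the criterion of N\'eron--Ogg--Shafarevich; hence the rational primes underlying $S$, together with the archimedean place, number at most $\omega(D)+\omega(N_E)+3$, where $N_E$ is the conductor of $E$ and the summand $3$ accounts for $2$, $3$, and $\infty$. Since each rational place has at most $[K:\Q]\le 6$ places of $K$ above it, $\#S(D)\le 6\bigl(\omega(D)+\omega(N_E)+3\bigr)$. Combining the three estimates,
$$
\rk E^{(D)}(\Q)\ \le\ \rk E^{(D)}(K)\ \le\ 2\,\#S(D)+2\dim_{\F_2}\mathrm{Cl}(K)[2]\ \le\ 12\,\omega(D)+c(E),
$$
with $c(E)=12\,\omega(N_E)+36+2\dim_{\F_2}\mathrm{Cl}(K)[2]$. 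As $K$ is the splitting field of $f$, the data $K$, $N_E$, and the $2$-rank of the class group of $K$ (a number field of degree at most $6$, whose class group is effectively computable, e.g.\ via Minkowski's bound) are all effectively determined by $E$, so $c(E)$ is effectively computable.

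The computation is almost entirely bookkeeping; the one input that is not purely formal is the twist-invariance $\Q(E^{(D)}[2])=\Q(E[2])$, which is immediate from the displayed model of $E^{(D)}$ and is precisely what makes the bound uniform in $D$. Equivalently one could bound $\dim_{\F_2}\Sel^{(2)}(E^{(D)}/\Q)$ cohomologically, through $H^1$ of the Galois group unramified outside $S$ acting on $E^{(D)}[2]$, using again that this module is cut out by the fixed field $K$; the descent formulation is preferred here only because it renders the constant $12=2\cdot 6$ transparent.
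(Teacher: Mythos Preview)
Your proof is correct and is precisely the argument the paper invokes by citing Silverman's exercise: complete $2$-descent over the fixed field $K=\Q(E[2])=\Q(E^{(D)}[2])$, with the constant absorbed into the number of bad places of $E$ and the $2$-rank of $\mathrm{Cl}(K)$. The paper's proof is a one-line reference, and your write-up simply unpacks that reference, landing on the same shape of constant $c(E)$ that the paper describes in its parenthetical remark.
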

\begin{proof} This follows from \cite{SilvermanAEC2ndEd} Ch.\ VIII, Exercise 8.1 with $m=2$. In particular, $c(E)$ is bounded by a multiple of the number of places of bad reduction of $E$ plus  the $2$-rank of the class group of $K=\Q(E[2])=\Q(E^{(D)}[2])$. 
\end{proof}
From this we get
\begin{corollary} \label{CoroTwistGen} Let $E$ be an elliptic curve over $\Q$ and let $d$ be a positive integer. There is an effectively computable constant $C(E,d)$ depending only on $E$ and $d$ such that the following holds:

Let $D$ be a squarefree integer. Let $g$ be a rational function on $E^{(D)}$ defined over $\Q$ of degree $d$. If for some $N$ there is a sequence of rational points $P_1,...,P_N\in E^{(D)}(\Q)$ satisfying that $g(P_1),...,g(P_N)$ is a non-trivial arithmetic progression in $\Q$, then $N< C(E,d)^{\omega(D)+1}$.
\end{corollary}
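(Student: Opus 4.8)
The plan is to combine Corollary \ref{Coro1}, applied to the quadratic twist $E^{(D)}$, with the twist rank bound of Lemma \ref{LemmaRkBd}. The crucial point is that the $j$-invariant is a twist invariant: writing $j_0$ for the $j$-invariant of $E$, every $E^{(D)}$ has the same $j$-invariant $j_0$, and $j_0\in\Q$ because $E$ is defined over $\Q$. Hence the constant $\kappa(j_0,d)$ supplied by Corollary \ref{Coro1} depends only on $E$ (through $j_0$) and on $d$, and in particular not on $D$.

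First I would apply Corollary \ref{Coro1} with $L=\Q$, to the elliptic curve $E^{(D)}$, the degree-$d$ rational function $g$ on $E^{(D)}$, and the points $P_1,\dots,P_N\in E^{(D)}(\Q)$. Since $g(P_1),\dots,g(P_N)$ is a non-trivial arithmetic progression in $\Q$ (so that, in particular, no $P_j$ is a pole of $g$), this yields
$$
1+\rk E^{(D)}(\Q)>\kappa(j_0,d)\cdot\log N.
$$
Next I would feed in Lemma \ref{LemmaRkBd}, which gives an effectively computable $c(E)\ge 0$ with $\rk E^{(D)}(\Q)\le 12\,\omega(D)+c(E)$, to obtain
$$
\kappa(j_0,d)\cdot\log N<1+12\,\omega(D)+c(E)\le\big(13+c(E)\big)\big(\omega(D)+1\big),
$$
the last step being the elementary inequality valid since $\omega(D)\ge 0$ and $c(E)\ge 0$.

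Finally I would exponentiate: with
$$
C(E,d)=\exp\!\left(\frac{13+c(E)}{\kappa(j_0,d)}\right),
$$
the previous display rearranges to $N<C(E,d)^{\omega(D)+1}$, which is the claim. Since $c(E)$ is effectively computable by Lemma \ref{LemmaRkBd}, since $\kappa(j_0,d)$ is effectively computable by Corollary \ref{Coro1}, and since $j_0$ can be read off from $E$, the constant $C(E,d)$ is effectively computable and depends only on $E$ and $d$. There is no serious obstacle here beyond the bookkeeping of constants; the one point that deserves attention is precisely the twist-invariance of the $j$-invariant, which is what makes a single $\kappa(j_0,d)$ work uniformly over all squarefree $D$.
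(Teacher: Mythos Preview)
Your proof is correct and follows exactly the paper's approach: apply Corollary \ref{Coro1} over $L=\Q$ to $E^{(D)}$ (using that quadratic twists preserve the $j$-invariant, so $\kappa(j_0,d)$ is independent of $D$), then bound $\rk E^{(D)}(\Q)$ via Lemma \ref{LemmaRkBd}. Your write-up is in fact more detailed than the paper's, supplying an explicit admissible value for $C(E,d)$.
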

\begin{proof} The result is obtained from Corollary \ref{Coro1} with $L=\Q$, using the fact that taking quadratic twists does not change the $j$-invariant, and using Lemma \ref{LemmaRkBd} to bound $\rk E^{(D)}(\Q)$.
\end{proof}
We note that Corollary \ref{CoroTwistIntro} is a special case of Corollary \ref{CoroTwistGen}.

\subsection{Average bounds for Mordell curves}\label{SecAvgMordell} As in the introduction, for $x>0$ we let $S(x)$ be the set of sixth-power free integers $n$ with $|n|\le x$. It is an elementary result in Analytic Number Theory that the number of $k$-power free positive integers up to $x$ is asymptotic to $x/\zeta(k)$ where $\zeta(s)$ is the Riemann zeta function. In particular we have 
\begin{lemma} \label{LemmaZeta} As $x\to \infty$ we have the asymptotic estimate
$$
\#S(x) \sim \frac{2}{\zeta(6)}\cdot x = \frac{1890}{\pi^6}\cdot x.
$$
\end{lemma}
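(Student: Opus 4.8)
The plan is to reduce the count to the classical estimate for sixth-power-free positive integers and then apply a standard M\"obius inversion. First I would note that every multiple of $2^6$ fails to be sixth-power free, so $0\notin S(x)$, and that $n$ is sixth-power free if and only if $-n$ is; hence $\#S(x)=2Q(x)$, where $Q(x)$ denotes the number of sixth-power-free integers $m$ with $1\le m\le x$. It then suffices to prove $Q(x)=x/\zeta(6)+O(x^{1/6})$, since this gives $\#S(x)=2x/\zeta(6)+O(x^{1/6})\sim (2/\zeta(6))\,x$, and substituting $\zeta(6)=\pi^6/945$ yields the stated constant $1890/\pi^6$.

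For the estimate of $Q(x)$ I would use that $\sum_{d^6\mid m}\mu(d)$ is the indicator function of the sixth-power-free integers: if $t$ denotes the largest integer with $t^6\mid m$, then $\{d:d^6\mid m\}=\{d:d\mid t\}$, so the sum equals $\sum_{d\mid t}\mu(d)$, which is $1$ when $m$ is sixth-power free and $0$ otherwise. Interchanging the order of summation,
$$
Q(x)=\sum_{m\le x}\ \sum_{d^6\mid m}\mu(d)=\sum_{d\le x^{1/6}}\mu(d)\left\lfloor\frac{x}{d^6}\right\rfloor .
$$
Next I would replace $\lfloor x/d^6\rfloor$ by $x/d^6+O(1)$, obtaining a main term $x\sum_{d\le x^{1/6}}\mu(d)d^{-6}$ and an error $O(x^{1/6})$; completing the sum over $d$ to all positive integers costs a further $x\cdot O\!\big(\sum_{d>x^{1/6}}d^{-6}\big)=O(x\cdot x^{-5/6})=O(x^{1/6})$, and $\sum_{d\ge 1}\mu(d)d^{-6}=1/\zeta(6)$ by the identity $\sum_{d\ge1}\mu(d)d^{-s}=1/\zeta(s)$ valid for $\mathrm{Re}\,s>1$. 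This gives $Q(x)=x/\zeta(6)+O(x^{1/6})$.

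Combining the two steps yields $\#S(x)=2x/\zeta(6)+O(x^{1/6})\sim(2/\zeta(6))\,x=(1890/\pi^6)\,x$, using $\zeta(6)=\pi^6/945$. I do not expect any genuine obstacle here: the argument is entirely standard and could in principle be replaced by a citation. The only points that require a little care are the conventions in the definition of $S(x)$, namely excluding $0$ and correctly counting negative integers so that the factor $2$ is right, and the bookkeeping of the two error terms, each of which is $O(x^{1/6})$ and hence negligible against the linear main term.
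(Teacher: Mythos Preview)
Your argument is correct and is exactly the standard M\"obius-inversion proof of the asymptotic for $k$-power-free integers, specialized to $k=6$ and adjusted for the two-sided count. The paper itself does not give a proof: it simply states before the lemma that this is ``an elementary result in Analytic Number Theory'' and records the consequence, so your write-up supplies the details the paper omits rather than offering an alternative route.
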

For $n$ a sixth-power free integer, we consider the Mordell elliptic curve $A_n$ defined by the equation $y^2=x^3+n$. The following theorem is a special case of a result due to Fouvry, cf.\ \cite{Fouvry} Th\'eor\`eme 1 (using the bounds $R^+(\sqrt{3})\le 115$ and $R^-(\sqrt{3})\le 100$ given there). 
\begin{theorem} \label{ThmFouvry} The following estimate holds for all large enough $x$:
$$
\sum_{n\in S(x)} 3^{(\rk A_n(\Q))/2} < 216\cdot x.
$$
\end{theorem}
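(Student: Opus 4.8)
The plan is to derive the estimate from Fouvry's Th\'eor\`eme 1 in \cite{Fouvry} by combining a $3$-isogeny descent bound for the rank with Cauchy--Schwarz and the asymptotic count of sixth-power free integers from Lemma \ref{LemmaZeta}.

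First I would record the isogeny-descent inequality. Every Mordell curve $A_n\colon y^2=x^3+n$ carries a $\Q$-rational $3$-isogeny $\phi\colon A_n\to A_n'$, where $A_n'\colon y^2=x^3-27n$, whose dual $\hat\phi$ satisfies $\hat\phi\circ\phi=[3]$ once the target of $\hat\phi$ is identified with $A_n$ via the evident scaling isomorphism $A_{729n}\cong A_n$. The descent exact sequences attached to $\phi$ and $\hat\phi$ give injections $A_n'(\Q)/\phi A_n(\Q)\hookrightarrow \Sel_\phi(A_n/\Q)$ and $A_n(\Q)/\hat\phi A_n'(\Q)\hookrightarrow \Sel_{\hat\phi}(A_n'/\Q)$, together with a four-term exact sequence linking these two quotients to $A_n(\Q)/3A_n(\Q)$. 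Since $3^{\rk A_n(\Q)}\le \#(A_n(\Q)/3A_n(\Q))$ always holds, chasing this sequence yields
\[
3^{\rk A_n(\Q)}\ \le\ \#\Sel_\phi(A_n/\Q)\cdot \#\Sel_{\hat\phi}(A_n'/\Q)
\]
for every sixth-power free $n$, the occasional rational $3$-torsion on exceptional $A_n$ only improving the bound.

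Next, taking square roots and summing over $n\in S(x)$, Cauchy--Schwarz gives
\[
\sum_{n\in S(x)}3^{(\rk A_n(\Q))/2}\ \le\ \Bigl(\sum_{n\in S(x)}\#\Sel_\phi(A_n/\Q)\Bigr)^{1/2}\Bigl(\sum_{n\in S(x)}\#\Sel_{\hat\phi}(A_n'/\Q)\Bigr)^{1/2}.
\]
Fouvry's Th\'eor\`eme 1 bounds the two averaged $3$-isogeny Selmer sums on the right, for all large $x$, by $R^+(\sqrt3)\cdot\#S(x)$ and $R^-(\sqrt3)\cdot\#S(x)$ respectively, with the values $R^+(\sqrt3)\le 115$ and $R^-(\sqrt3)\le 100$ recorded there. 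Together with $\#S(x)\sim \tfrac{1890}{\pi^6}x$ (Lemma \ref{LemmaZeta}), this gives
\[
\sum_{n\in S(x)}3^{(\rk A_n(\Q))/2}\ \le\ \bigl(R^+(\sqrt3)\,R^-(\sqrt3)\bigr)^{1/2}\,\#S(x)\ \le\ \sqrt{11500}\cdot\tfrac{1890}{\pi^6}\,x\ +\ o(x)\ <\ 216\,x
\]
for $x$ large, because $\sqrt{11500}\cdot 1890/\pi^6<211$, leaving ample room to absorb the error term.

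I expect no conceptual obstacle here. The only point requiring care is to line up the normalization of the averaged Selmer quantities in \cite{Fouvry} (which isogeny direction each $R^{\pm}(\sqrt3)$ refers to, and the bookkeeping of positive versus negative $n$ inside $S(x)$) with the two sums above, and then to verify the closing numerical inequality; in particular, should Fouvry's Th\'eor\`eme 1 already be phrased in the packaged form $\sum_{n\in S(x)}3^{(\rk A_n(\Q))/2}\le(\cdots)x$, the first two steps are subsumed and one only substitutes the numerical values of $R^{\pm}(\sqrt3)$.
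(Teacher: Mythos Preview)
The paper does not prove this statement; it simply records it as a special case of Fouvry's Th\'eor\`eme~1 and plugs in the numerical bounds $R^+(\sqrt{3})\le 115$ and $R^-(\sqrt{3})\le 100$. Your closing caveat is exactly what applies: Fouvry's Th\'eor\`eme~1 is already stated in the packaged form
\[
\sum_{\substack{1\le k\le X\\ k\ \text{sixth-power free}}} (\sqrt{3})^{\,r_k}\ \le\ \bigl(R^+(\sqrt{3})+o(1)\bigr)\,X,
\qquad
\sum_{\substack{-X\le k\le -1\\ k\ \text{sixth-power free}}} (\sqrt{3})^{\,r_k}\ \le\ \bigl(R^-(\sqrt{3})+o(1)\bigr)\,X,
\]
so adding the two and using $115+100=215$ gives $\sum_{n\in S(x)}3^{(\rk A_n(\Q))/2}\le (215+o(1))x<216x$ for large $x$. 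That one line is the entire content of the paper's citation.

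Your descent-plus-Cauchy--Schwarz argument is essentially a sketch of what Fouvry does \emph{inside} his proof, not a deduction from his Th\'eor\`eme~1 as stated. One point to correct in that sketch: in Fouvry's paper the quantities $R^{\pm}(\sqrt{3})$ are the constants governing $\tfrac{1}{X}\sum 3^{r_k/2}$ over $k$ of a fixed sign (normalized by $X$, not by $\#S(x)$), not bounds for averaged Selmer sums. Hence the relevant arithmetic is the addition $115+100=215$, not the geometric-mean computation $\sqrt{11500}\cdot 1890/\pi^6$ you wrote. Either way the final inequality $<216x$ holds, but the direct route matches the paper and avoids the need to align Selmer normalizations.
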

With these results, we can proceed to the proof of Theorem \ref{ThmAvgMohanty}.
\begin{proof}[Proof of Theorem \ref{ThmAvgMohanty}] By Theorem \ref{ThmIntro} with $j_0=0$ we have
$$
 \max\{\beta_x(A_n),\beta_y(A_n)\} \le \exp\left((1+ \rk A_n(\Q))/c\right)
$$
where $c=c(0)>0$ is an absolute constant. Let us take  $\tau = (c\cdot \log 3)/2>0$ and note that
$$
\max\{\beta_x(A_n),\beta_y(A_n)\}^\tau \le \sqrt{3}\cdot 3^{(\rk A_n(\Q))/2}.
$$
Theorem \ref{ThmFouvry} gives that for all large enough $x$ we have
$$
\sum_{n\in S(x)} \max\{\beta_x(A_n),\beta_y(A_n)\}^\tau \le 216 \sqrt{3} \cdot x.
$$
Since $216 \sqrt{3}\cdot 1890/\pi^6<735.5$, Lemma \ref{LemmaZeta} gives that for all large enough $x$ we have
$$
\sum_{n\in S(x)} \max\{\beta_x(A_n),\beta_y(A_n)\}^\tau < 800\cdot \#S(x).
$$
The result follows
\end{proof}
We remark that we can use Corollary \ref{Coro1} instead of Theorem \ref{ThmIntro} to obtain a version of Theorem \ref{ThmAvgMohanty} for more general rational functions, not just $x$ and $y$-coordinates.

As explained in the introduction, a crucial aspect in the proof of Theorem \ref{ThmAvgMohanty} is that our lower bounds for the rank are logarithmic on the maximal length of an arithmetic progression (cf.\ Theorems \ref{ThmIntro} and \ref{ThmMainArith}), and not worse than logarithmic. Briefly, the reason why Fouvry's theorem can control averages of an exponential function of the rank that  the core of \cite{Fouvry} is an average estimate for the size of certain $3$-isogeny Selmer groups, which is then used as an upper bound for $\# \left(A_n(\Q)/3A_n(\Q)\right)\ge 3^{\rk A_n(\Q)}$.

\subsection{Average bounds for congruent number curves}\label{SecAvgCongruent} 

For $x>0$, let $Q(x)$ be the set of odd squarefree positive integers $n\le x$.  We remark that it is an elementary exercise in sieve theory to check that $\#Q(x)\sim (3/\pi^2)\cdot x$ as $x\to \infty$. 

Given a squarefree integer $n$, we consider the elliptic curve $B_n$ defined by $y^2=x^3-n^2x$. These elliptic curves are associated to the classical congruent number problem. The following theorem is a direct consequence of Theorem 1 in \cite{HB} by Heath-Brown.
\begin{theorem} \label{ThmHB} Let $\ell$ be a positive integer. As $x\to \infty$ we have the asymptotic estimate
$$
\sum_{n\in Q(x)} \left(\#S_2(B_n)\right)^\ell\sim 4^{\ell+1}\prod_{j=1}^\ell\left(1+2^j\right)\cdot \# Q(x).
$$
In particular, there is a positive constant $\gamma(\ell)$ depending only on $m$ such that for all $x>1$ we have
$$
\sum_{n\in Q(x)} \left(\#S_2(B_n)\right)^\ell< \gamma(\ell) \cdot \# Q(x).
$$
\end{theorem}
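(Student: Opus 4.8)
The plan is to deduce the statement from Theorem~1 of \cite{HB}, together with the elementary count $\#Q(x)\sim(3/\pi^2)x$; it is essentially a repackaging rather than a new argument. First I would recall the precise form of Heath-Brown's theorem: it determines the limiting distribution of the $2$-descent Selmer data of the curves $B_n:y^2=x^3-n^2x$ as $n$ runs over squarefree integers, and --- what is needed here --- the asymptotics of all power moments of $\#S_2(B_n)$. His analysis is phrased in terms of the $\phi$- and $\hat\phi$-Selmer groups attached to the rational $2$-isogeny on $B_n$, equivalently in terms of a ``Selmer rank'' with $\#S_2(B_n)$ a fixed power of $2$ times a power of $2$ varying with $n$ (the fixed factor absorbing the rational $2$-torsion $(\Z/2)^2\subseteq B_n$). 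After translating from Heath-Brown's normalization to the honest quantity $\#S_2(B_n)$, his result gives, for every positive integer $\ell$,
$$
\frac{1}{\#Q(x)}\sum_{n\in Q(x)}\left(\#S_2(B_n)\right)^{\ell}\longrightarrow 4^{\ell+1}\prod_{j=1}^{\ell}\left(1+2^{j}\right)\qquad(x\to\infty),
$$
the restriction to the odd squarefree $n$ in $Q(x)$ being legitimate because Heath-Brown's character-sum estimates are uniform over the residue classes modulo $8$. Since $\#Q(x)\sim(3/\pi^2)x$ (used to convert between the $x$-normalized and $\#Q(x)$-normalized forms of his theorem), this yields the claimed asymptotic $\sum_{n\in Q(x)}(\#S_2(B_n))^{\ell}\sim 4^{\ell+1}\prod_{j=1}^{\ell}(1+2^{j})\cdot\#Q(x)$.

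For the ``in particular'' clause I would argue elementarily. Put $a_\ell(x)=\sum_{n\in Q(x)}(\#S_2(B_n))^{\ell}$, $b(x)=\#Q(x)$, and $c_\ell=4^{\ell+1}\prod_{j=1}^{\ell}(1+2^{j})$. Both $a_\ell$ and $b$ are non-decreasing in $x$, one has $b(x)\ge 1$ for all $x\ge 1$ because $1\in Q(x)$, and $a_\ell(x)/b(x)\to c_\ell$ by the previous paragraph. Hence there is $x_0=x_0(\ell)$ with $a_\ell(x)<(c_\ell+1)\,b(x)$ for all $x\ge x_0$, while for $1<x<x_0$ monotonicity gives $a_\ell(x)\le a_\ell(x_0)\le a_\ell(x_0)\,b(x)$. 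Taking $\gamma(\ell)=\max\{c_\ell+1,\ a_\ell(x_0)+1\}$ gives $a_\ell(x)<\gamma(\ell)\cdot\#Q(x)$ for every $x>1$.

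The only genuine point of care is the reconciliation in the first step: one must verify that Heath-Brown's constant, once converted from his Selmer ranks and his admissible set of $n$ to $\#S_2(B_n)$ for odd squarefree $n$, is exactly $4^{\ell+1}\prod_{j=1}^{\ell}(1+2^{j})$, and that he proves convergence of the \emph{moments} and not merely convergence in distribution. Should only the distributional statement together with his tail bounds be on record, one would instead invoke uniform integrability of the family $\{(\#S_2(B_n))^{\ell}\}_n$ --- which his sieve-type estimates supply comfortably --- to upgrade to moment convergence; but I expect his result to be stated directly in moment form, so this contingency should not arise.
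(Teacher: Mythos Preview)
Your proposal is correct and matches the paper's approach exactly: the paper gives no proof of this statement beyond declaring it ``a direct consequence of Theorem~1 in \cite{HB}'', which is precisely what you do, with the added bonus that you spell out the elementary passage from the asymptotic to the uniform bound $\gamma(\ell)$. Your caveats about normalization and moment-versus-distribution convergence are reasonable due diligence, though in fact Heath-Brown's Theorem~1 is stated directly in moment form for odd squarefree $n$, so no contingency is needed.
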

 The previous bound for the moments of $\#S_2(B_n)$ allows us to prove Theorem \ref{ThmAvgCongruent}.
\begin{proof} All the elliptic curves $B_n$ have $j$-invariant equal to $1728$. For each squarefree positive integer $n$, Theorem \ref{ThmIntro} with $j_0=1728$ gives
$$
 \max\{\beta_x(B_n),\beta_y(B_n)\} \le \exp\left((1+ \rk B_n(\Q))/c\right)
$$
where $c=c(1728)>0$ is an absolute constant. Given $k>0$ we choose the positive integer
$$
\ell=\ell(k)=\left\lceil\frac{k}{c\log 2}\right\rceil
$$
where $\lceil t\rceil$ is the smallest integer bigger than or equal to $t$. Thus, $\ell\log 2\ge k/c$ and we deduce
$$
 \max\{\beta_x(B_n),\beta_y(B_n)\}^k\le 2^{(1+ \rk B_n(\Q))\cdot \ell} < \#\left(B_n(\Q)/2B_n(\Q)\right)^\ell
$$
for each squarefree positive integer $n$. Here we used the classical fact that the rational torsion of $B_n$ is isomorphic to $\Z/2\Z\times \Z/2\Z$, so that  $ \#\left(B_n(\Q)/2B_n(\Q)\right) = 2^{2+\rk B_n(\Q)}$.

The fundamental injective map $B_n(\Q)/2B_n(\Q)\to S_2(B_n(\Q))$ (cf.\ the exact sequence \eqref{EqnExact}) together with Theorem \ref{ThmHB} finally give that for all $x>1$ 
$$
\sum_{n\in S(x)}  \max\{\beta_x(B_n),\beta_y(B_n)\}^k < \gamma\left(\left\lceil\frac{k}{c\log 2}\right\rceil\right)\cdot \#Q(x)
$$
\end{proof}
Finally, we remark that using Corollary \ref{Coro1} instead of Theorem \ref{ThmIntro}, the same argument gives a version of Theorem \ref{ThmAvgCongruent} for any non-constant rational function on $B_n$, not just $x$ and $y$ coordinates.


\section{Acknowledgments}

The first author was supported by the FONDECYT Iniciaci\'on en Investigaci\'on grant 11170192, and the CONICYT PAI grant 79170039. The second author was supported by FONDECYT Regular grant 1190442.

We thank Dino Lorenzini and Eduardo Friedman for encouraging us to extend our methods beyond the case of arithmetic progressions in $x$ and $y$-coordinates. Also, we are indebted to \'Eric Gaudron for valuable suggestions regarding effectivity.

%
%
%


\end{document}